\newtheorem{thm}{Theorem}[section]
\begin{document}
\title[Differentiable absorption, connections, and lifts]{Differentiable absorption of Hilbert $C^*$-modules, connections, and lifts of unbounded operators}

\author{Jens Kaad}
\address{International School of Advanced Studies (SISSA),
Via Bonomea 265,
34136 Trieste,
Italy}
\email{jenskaad@hotmail.com}

%
%
%

\subjclass[2010]{46L08, 46L87; 53C05, 47A05, 46L57, 58B34}
\keywords{Hilbert $C^*$-modules, derivations, differentiable absorption, Gra\ss mann connections, regular unbounded operators.}

\begin{abstract}
The Kasparov absorption (or stabilization) theorem states that any countably generated Hilbert $C^*$-module is isomorphic to a direct summand in the standard module of square summable sequences in the base $C^*$-algebra. In this paper, this result will be generalized by incorporating a densely defined derivation on the base $C^*$-algebra. This leads to a differentiable version of the Kasparov absorption theorem. The extra compatibility assumptions needed are minimal: It will only be required that there exists a sequence of generators with mutual inner products in the domain of the derivation. The differentiable absorption theorem is then applied to construct densely defined connections (or correspondences) on Hilbert $C^*$-modules. These connections can in turn be used to define selfadjoint and regular "lifts" of unbounded operators which act on an auxiliary Hilbert $C^*$-module.
\end{abstract}

\maketitle
\tableofcontents
\section{Introduction}
The famous Kasparov absorption theorem states that any countably generated Hilbert $C^*$-module $X$ over any $C^*$-algebra $A$ is a direct summand in a free Hilbert $C^*$-module, \cite{Kas:HSV, MiPh:ETH, Lan:HCM}. One may thus think of Hilbert $C^*$-modules as a natural generalization of finitely generated projective modules over $C^*$-algebras.

The main purpose of this paper is to prove a version of the Kasparov absorption theorem which takes into account any differentiable structure which may exist on the base $C^*$-algebra $A$. Following the scheme of noncommutative geometry, this extra differentiable structure will be encoded in a densely defined derivation $\de$ which is compatible with the adjoint operation, \cite{Con:NCG}.

One of the main applications of the Kasparov absorption theorem is to the construction of the interior Kasparov product in $KK$-theory, \cite{Kas:OFE, Bla:KOA, JeTh:EKT}. Consequently, we expect that the differentiable absorption theorem will play an important role for the current investigations of the unbounded version of the interior Kasparov product, \cite{KaLe:SFU, Mes:UCN}.  

Among the challenges which arise during the construction of the unbounded Kasparov product one encounters the following: Consider an unbounded (selfadjoint and regular) operator $D$ acting on an auxiliary Hilbert $C^*$-module $Y$ which carries an action of $A$. Suppose that $D$ implements the densely defined derivation on $A$ by taking commutators. Is it then possible to construct:
\begin{enumerate}
\item A Hermitian connection $\Na$ which is densely defined on $X$?
\item An unbounded operator $1 \ot_{\Na} D$ which is densely defined on the interior tensor product of $X$ and $Y$ and which has the \emph{formal} expression $c(\Na) + 1 \ot D$, where $c$ denotes the ``Clifford action''?
\end{enumerate}

The second purpose of this paper is to provide a detailed discussion of these problems.
\bigskip

Before we go any further, let us state the Kasparov absorption theorem. Let $H_A$ denote the standard module consisting of square summable sequences in $A$.

\begin{thm}[Continuous absorption]
There exists a bounded adjointable isometry $W : X \to H_A$.
\end{thm}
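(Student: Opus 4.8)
The plan is to construct $W$ essentially by hand, producing along the way the projection $WW^*\in\mathcal{L}(H_A)$ that exhibits $X$ as an orthogonal direct summand of $H_A$. First fix a sequence $(\xi_n)_{n\ge1}$ generating $X$, rescaled so that $\|\xi_n\|\le1$, and set $T\colon H_A\to X$, $T\big((a_n)_n\big)=\sum_n 2^{-n}\xi_n a_n$. Dominating the tail of the series by $\|(a_m)_m\|\sum_{n\ge N}2^{-n}$ (using $a_n^*a_n\le\langle(a_m)_m,(a_m)_m\rangle$) shows that $T$ is well defined, bounded with $\|T\|\le1$, adjointable with $T^*\xi=\big(2^{-n}\langle\xi_n,\xi\rangle\big)_n$, and has dense range, since $\operatorname{ran}T\supseteq\xi_nA$ for every $n$. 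Moreover $c:=TT^*=\sum_n 4^{-n}|\xi_n\rangle\langle\xi_n|$ is a norm‑convergent sum of rank‑one operators, hence a positive element of the $C^*$‑algebra $\mathcal{K}(X)$, and it is \emph{strictly positive} there, because $\langle\xi,c\xi\rangle=0$ forces $\langle\xi_n,\xi\rangle=0$ for all $n$, i.e. $\xi=0$.

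The heart of the matter — and the step I expect to be the real obstacle — is to convert density of $\operatorname{ran}T$ into an exact partition of the identity operator on $X$. Pick $\epsilon_k\searrow0$ and put $h_\epsilon(t)=t(t+\epsilon)^{-1}$. Strict positivity of $c$ makes $(h_\epsilon(c))_{\epsilon>0}$ an approximate unit for $\mathcal{K}(X)$ — via the elementary estimate $\|(1-h_\epsilon(c))c\|\le\epsilon$ together with density of $c\,\mathcal{K}(X)$ — so the $h_{\epsilon_k}(c)$ increase and converge strictly in $\mathcal{L}(X)$ to $1_X$, giving the telescoping identity $1_X=h_{\epsilon_1}(c)+\sum_{k\ge1}\big(h_{\epsilon_{k+1}}-h_{\epsilon_k}\big)(c)$. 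Now put $S_0:=T\gamma_0(T^*T)$ and $S_k:=T\gamma_k(T^*T)$ for $k\ge1$, where $\gamma_0,\gamma_k\ge0$ are the continuous functions on $[0,\|T\|^2]$ determined by $t\gamma_0(t)^2=h_{\epsilon_1}(t)$ and $t\gamma_k(t)^2=(h_{\epsilon_{k+1}}-h_{\epsilon_k})(t)$ (both extend continuously across $t=0$). The intertwining relation $Tf(T^*T)=f(TT^*)T$ then yields $S_0S_0^*=h_{\epsilon_1}(c)$ and $S_kS_k^*=\big(h_{\epsilon_{k+1}}-h_{\epsilon_k}\big)(c)$ for $k\ge1$, so that $\sum_{k\ge0}S_kS_k^*=1_X$ strictly, with every partial sum $\le1_X$. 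The only genuinely non‑formal ingredient of the whole argument is this book‑keeping with approximate units and continuous functional calculus.

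Finally, define $W\colon X\to\bigoplus_{k\ge0}H_A\cong H_A$ by $W\xi=(S_k^*\xi)_{k\ge0}$. Testing the strict convergence $\sum_{k\le N}S_kS_k^*\to1_X$ against the compact operator $|\xi\rangle\langle\xi|$ shows $\sum_k\langle S_k^*\xi,S_k^*\xi\rangle=\sum_k\langle\xi,S_kS_k^*\xi\rangle$ converges in $A$ to $\langle\xi,\xi\rangle$; hence $W$ maps into $H_A$, is $A$‑linear, and satisfies $\langle W\xi,W\xi\rangle=\langle\xi,\xi\rangle$, so by polarisation $\langle W\xi,W\eta\rangle=\langle\xi,\eta\rangle$. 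Adjointability is verified by setting $W^*\big((b_k)_k\big)=\sum_k S_kb_k$: the partial‑sum maps $(b_k)_k\mapsto\sum_{M\le k\le N}S_kb_k$ have norm at most $1$, because composing such a map with its own adjoint gives $\sum_{M\le k\le N}S_kS_k^*\le1_X$, so the series converges in $X$, and the relation $\langle W\xi,(b_k)_k\rangle=\langle\xi,W^*((b_k)_k)\rangle$ is immediate. Thus $W^*W=1_X$, and $W$ is the desired bounded adjointable isometry.
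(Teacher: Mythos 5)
Your construction is, at bottom, the paper's own argument transported to the other leg of the Gram operator: the paper works with $G=\Phi\Phi^*$ on $H_A$ (the weights being hidden in the normalisation $\|\xi_n\|\le 1/n$) and telescopes the resolvent functions $t\mapsto t(t+1/n)^{-1}$ to produce the column isometry with entries $(G_n-G_{n-1})^{1/2}\Phi$, while you work with $c=TT^*$ on $X$ and telescope $h_{\epsilon_k}(c)$, your $S_k^*$ playing the role of the paper's $\Psi_k$. The analytic steps you flag as delicate — the intertwining $Tf(T^*T)=f(TT^*)T$, the bound $\sum_{M\le k\le N}S_kS_k^*\le 1_X$ giving convergence of $W^*$, and the convergence of $\sum_k\langle\xi,S_kS_k^*\xi\rangle$ in $A$ — all check out.

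There is, however, one step whose stated justification fails: the claim that $c$ is strictly positive in $\mathcal{K}(X)$ \emph{because} $\langle\xi,c\xi\rangle=0$ forces $\xi=0$. Since $\langle\xi,c\xi\rangle=\langle c^{1/2}\xi,c^{1/2}\xi\rangle$, your computation shows only that $c$ is injective, i.e.\ that $(\operatorname{ran}c)^\perp=\{0\}$; in a Hilbert $C^*$-module a submodule can have trivial orthogonal complement without being dense, and strict positivity of $c$ in $\mathcal{K}(X)$ is equivalent to density of $cX$ in $X$ (equivalently of $c\,\mathcal{K}(X)$ in $\mathcal{K}(X)$), which is exactly what your approximate-unit argument consumes. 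So injectivity is not enough, and the gap must be closed using the density of $\operatorname{ran}T$, which you have already established. The quickest repair: $\|(1-h_\epsilon(c))T\|^2=\|(1-h_\epsilon(c))\,c\,(1-h_\epsilon(c))\|\le\sup_{t\ge 0}\,t\,(1-h_\epsilon(t))^2\le\epsilon$, so $(1-h_\epsilon(c))\eta\to 0$ as $\epsilon\to 0$ for every $\eta$ in the dense subspace $\operatorname{ran}T$, hence for every $\eta\in X$ because $\|1-h_\epsilon(c)\|\le 1$; this is precisely the strict convergence $h_{\epsilon_k}(c)\to 1_X$ that drives the rest of your proof. (Alternatively, invoke $\overline{cX}=\overline{(TT^*)^{1/2}X}=\overline{T H_A}=X$.) With this one-line substitution the argument is complete and correct.
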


Let $P := W W^* : H_A \to H_A$ denote the associated orthogonal projection and let us choose a dense $*$-subalgebra $\sA \su A$ which is included in the domain of the derivation $\de$. Suppose now that $P$ is represented by an infinite matrix $\{P_{ij}\}$ of elements in $\sA$. We are then interested in analyzing (the operator norm of) the derivative $\de(P) := \{ \de(P_{ij}) \}$. Our first remark is that it is known from examples that $\de(P)$ need \emph{not} be a bounded operator, see \cite[Proposition 6.18]{BrMeSu:GSU} for the concrete case of the ($\te$-deformed) Hopf fibration and \cite{Kaa:SSB} for a general discussion in the commutative case. 
\medskip

The main idea of the differentiable absorption theorem is to introduce an extra bounded operator which regularizes the growth of the derivative $\de(P)$. We will accomplish this task under the following minimal assumption:

\begin{assu}
There exists a sequence $\{\xi_n\}$ of generators for $X$ such that the inner product $\inn{\xi_n,\xi_m}$ lies in $\sA$ for all $n,m \in \nn$.
\end{assu}

In order to state our result, let us introduce the notation $\sK(H_A)$ for the compact operators on the standard module $H_A$ and $\sK(H_A)_\de$ for the \emph{differentiable} compact operators. The latter Banach $*$-algebra agrees with the completion of the finite matrices over $\sA$ with respect to the norm $\| \cd \|_\de := \| \cd \| + \| \de(\cd) \|$.

\begin{thm}[Differentiable absorption]
There exists a bounded adjointable isometry $W : X \to H_A$ and a positive selfadjoint bounded operator $K : H_A \to H_A$ such that
\begin{enumerate}
\item $K P = P K$
\item $W^* K W : X \to X$ has dense image.
\item $P K \in \sK(H_A)$
\item $P K^2 \in \sK(H_A)_\de$
\end{enumerate}
where $P := W W^* : H_A \to H_A$ is the associated orthogonal projection.
\end{thm}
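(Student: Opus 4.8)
The plan is to build $W$ and $K$ together, with $K$ obtained by transporting a well-chosen positive compact operator on $X$ back to $H_A$ along $W$. First I would reduce the list of conclusions. Since $KP=PK$ is demanded, look for $K$ of the shape $K=WK'W^*$ with $K'$ a positive adjointable operator on $X$; then (1) holds automatically, $W^*KW=K'$, $PK=WK'W^*$ and $PK^2=W(K')^2W^*$. So the theorem reduces to producing an isometry $W\colon X\to H_A$ and a positive operator $K'\in\sK(X)$ with dense image for which $W(K')^2W^*\in\sK(H_A)_\de$. As we shall see, (1)--(3) hold for \emph{any} such $W$, and only (4) constrains its choice.

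For the raw material, use the Assumption to fix a generating sequence $\{\xi_n\}$ with $\inn{\xi_n,\xi_m}\in\sA$ for all $n,m$, each generator repeated infinitely often, together with positive weights $\lambda_n$ to be pinned down later; set $Te_n:=\lambda_n\xi_n$. This defines an adjointable operator $T\colon H_A\to X$ with dense image, with $\|T\|<1$ as soon as $\sum_n\lambda_n^2\|\xi_n\|^2<1$. The operator $T^*T$ on $H_A$ is the infinite matrix $\big(\lambda_n\lambda_m\inn{\xi_n,\xi_m}\big)$ over $\sA$; a Schur-type estimate shows that if the $\lambda_n$ decay faster than any prescribed summable control of the numbers $\|\inn{\xi_n,\xi_m}\|$ and $\|\de(\inn{\xi_n,\xi_m})\|$ — which can be arranged recursively — then the finite truncations of $T^*T$ converge to it in the norm $\|\cd\|_\de$. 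Hence $T^*T\in\sK(H_A)_\de$, and by the Leibniz rule so are $(T^*T)^2$ and $(T^*T)^3$.

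The operator to take is $K':=TT^*$, a positive compact operator on $X$ with dense image (its range closure equals $\overline{TH_A}=X$). For any isometry $W$, the operator $K:=WTT^*W^*$ then satisfies (1) because $KP=PK$, satisfies (2) because $W^*KW=TT^*$ has dense image, and satisfies (3) because $TT^*\in\sK(X)$ and conjugation by the adjointable isometry $W$ keeps it compact. The whole game is to arrange (4), namely $PK^2=W(TT^*)^2W^*\in\sK(H_A)_\de$. The idea is to make $W(TT^*)^jW^*$ equal $(T^*T)^j$ on $PH_A$, which lies in $\sK(H_A)_\de$ by the previous paragraph. This is precisely what the polar decomposition $T=v(T^*T)^{1/2}$ would deliver: a partial isometry $v\colon H_A\to X$ is then forced to be a coisometry onto $X$ (its range is closed and dense), so $W:=v^*$ is an isometry with $P=v^*v$ the support projection of $T^*T$ and $W(TT^*)^jW^*=(T^*T)^j$ identically. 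The obstruction is that $\ker T$ — nonzero whenever the generators satisfy relations — need not be orthogonally complemented in $H_A$, so $T$ need not admit a polar decomposition. This is exactly the point where Kasparov's proof of continuous absorption has to work harder, replacing $T$ by its graph inside $X\oplus H_A$ and repeating each generator infinitely often. I would run that construction for the present $T$, obtaining $W$ together with identities $W(TT^*)^jW^*=(T^*T)^j+R_j$ in which the defects $R_j$ are norm-convergent series of iterated products of the entries $\lambda_n\lambda_m\inn{\xi_n,\xi_m}\in\sA$ (produced by the $(1+T^*T)^{-1/2}$-type normalizers entering the graph construction), hence again in $\sK(H_A)_\de$ for fast enough weights.

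The main obstacle is therefore condition (4). One must carry out the classical stabilization argument in a form transparent enough that $P$ and $K$ interact only through (infinitely iterated) power series in $T^*T$ — with no uncontrolled adjointable operator trapped between $W$ and $TT^*$ — and at the same time tune the weights $\lambda_n$ so that the Leibniz expansion of $\de$ on those power series converges in operator norm. Since $\sK(H_A)_\de$ is merely a Banach $*$-algebra and $\de$ is only closed and densely defined, the estimates have to be performed on finite matrices over $\sA$ and then passed to the limit in $\|\cd\|_\de$; making the necessary Schur-type bounds uniform across the Eilenberg-swindle decomposition that builds $W$ is the technical heart of the matter. Conditions (1)--(3), by contrast, come for free once $W$ exists: (1) from $KP=PK$, (2) from density of the image of $TT^*$, and (3) from $T^*T\in\sK(H_A)$.
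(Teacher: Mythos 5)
Your reduction is sound as far as it goes: with the ansatz $K = W K' W^*$ for a positive $K' \in \sK(X)$ with dense image, conditions (1)--(3) are indeed automatic, and the entire content of the theorem is concentrated in (4). The gap is that your treatment of (4) defers the whole difficulty to an unproved claim. You propose to obtain $W$ from the classical Kasparov stabilization (polar decomposition of the graph of $T$ inside $X \op H_A$, with generators repeated infinitely often) and then assert that the resulting identities $W (TT^*)^j W^* = (T^*T)^j + R_j$ have defects $R_j \in \sK(H_A)_\de$ for fast enough weights. But the classical construction gives no such control: the normalizers it produces are resolvent-type functions of $T^*T$ such as $(G + 1/n)^{-1}$ and their square roots, and the $\de$-norms of these operators blow up as $n \to \infty$ --- this is exactly why $\de(P)$ can be unbounded, as the paper notes in the introduction with reference to the Hopf fibration example. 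No choice of weights $\la_n$ repairs this, because the divergence comes from the functional calculus applied to $G = T^*T$, not from the size of the matrix entries of $G$ itself. What is missing is a concrete mechanism showing that the specific operator $P K^2$ (and not $P$, whose derivative remains uncontrolled even in the paper's construction) is differentiable.

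The paper supplies this mechanism by abandoning stabilization altogether. It builds the isometry directly from the telescoping family $H_n = (G + 1/n)^{-1} - (G + 1/(n-1))^{-1}$, setting $\Psi(\eta) = \sum_n e_n \cd \sqrt{H_n}\,\Phi(\eta)$, so that $P$ has entries $\sqrt{H_n}\, G \sqrt{H_m}$ and $P K^2$ has entries built from $\sqrt{H_n}\, G^2$. The decisive step is the quantitative estimate $\| \de(\sqrt{H_n} \cd G^2) \| \leq C_\ep \cd n^{-(1-\ep)}$ (Lemma \ref{l:delest}), proved via the integral representation of $\de\big((1 + nG)^{-1/2}\big)$ and the bound $\|G (1 + \la + nG)^{-1}\| \leq 1/n$; the extra factor $G^2$ (that is, $K^2$) is precisely what converts the divergent derivative of the square roots into a summable sequence. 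Your proposal contains no analogue of this estimate, and without it the claim $P K^2 \in \sK(H_A)_\de$ is not established. To complete your argument you would need either to prove a decay estimate of this kind for the defects $R_2$ of whatever stabilization you use, or to replace the stabilization by an explicit construction of $W$ in terms of functions of $T^*T$ for which such an estimate can be verified --- which is essentially what the paper does.
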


Our first main application of the differentiable absorption theorem is to construct a densely defined Gra\ss mann connection. To explain this result, let $\Om_\de(A) \su \sL(Y)$ denote the smallest $C^*$-subalgebra which contains $A$ and the image of the derivation $\de : \sA \to \sL(Y)$. We think of $\Om_\de(A)$ as an analogue of the continuous forms on a manifold. The Gra\ss mann connection is then \emph{formally} given by the formula $\Na_\de := P \de P$. The main problem is however to show that this expression makes sense and yields a \emph{densely defined} $\cc$-linear map on the direct summand $P H_A$ with values in the interior tensor product $P H_A \hot_A \Om_\de(A)$. This relies heavily on the differentiable absorption theorem. In order to state the properties of our Gra\ss mann connection we introduce the following pairing:
\[
\big( \cd , \cd \big) : X \ti X \hot_A \Om_\de(A) \to \Om_\de(A) \q \big(\xi, \eta \ot \om \big) := \inn{\xi,\eta} \cd \om
\]

\begin{thm}\label{t:graconint}
There exists a dense $\sA$-submodule $\sX \su X$ and a $\cc$-linear map $\Na_\de : \sX \to X \hot_A \Om_\de(A)$ which satisfies the Leibniz rule and is Hermitian, in the sense that
\begin{enumerate}
\item $\Na_\de(\xi \cd a) = \Na_\de(\xi) \cd a + \xi \ot \de(a)$
\item $\de(\inn{\xi,\eta}) = \big( \xi, \Na_\de(\eta) \big) - \big( \eta, \Na_\de(\xi) \big)^*$
\end{enumerate}
for all $\xi,\eta \in \sX$ and all $a \in \sA$.
\end{thm}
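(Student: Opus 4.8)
The plan is to realize the Gra\ss mann connection concretely on the direct summand $P H_A$ produced by the differentiable absorption theorem, using the bounded operator $K$ as a regularizer. First I would fix a bounded adjointable isometry $W : X \to H_A$ and a positive selfadjoint $K$ as in the differentiable absorption theorem, and use $W$ to identify $X$ with $P H_A$, where $P := W W^*$; this identification intertwines the inner products and the $A$-actions, and $W$ restricts to a unitary of $X$ onto $P H_A$. Next I would introduce the \emph{differentiable standard module} $\sH_\de \su H_A$, defined as the completion of the finite sequences over $\sA$ in the norm $\| \zeta \|_\de := \| \zeta \| + \| \de(\zeta) \|$, where $\de$ acts entrywise with values in $H_A \hot_A \Om_\de(A)$. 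Exactly as for $\sK(H_A)_\de$ one checks that $\sH_\de$ is a dense right $\sA$-submodule of $H_A$ which is moreover a left module over $\sK(H_A)_\de$, that $\de$ extends to a bounded $\cc$-linear map $\de : \sH_\de \to H_A \hot_A \Om_\de(A)$ with $\de(T\zeta) = \de(T)\zeta + T\de(\zeta)$ and $\de(\zeta a) = \de(\zeta)a + \zeta \de(a)$ for $T \in \sK(H_A)_\de$, $\zeta \in \sH_\de$, $a \in \sA$, and that $\de$ is Hermitian, in the sense that $\de \inn{\zeta,\mu}_{H_A} = (\zeta,\de\mu) - (\mu,\de\zeta)^*$ for $\zeta,\mu \in \sH_\de$. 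All of this is routine once the completions are set up: it uses only the bimodule identities for $\de$ on finite matrices and vectors together with continuity in $\| \cd \|_\de$.

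With this in place I would \emph{define}
\[
\sX := W^* P K^2 (\sH_\de) \su X, \q \Na_\de\big( W^* P K^2 \zeta \big) := (W^* \ot 1)\big( \de(P K^2 \zeta) \big) \in X \hot_A \Om_\de(A) ,
\]
which is the rigorous meaning of the formal expression $P \de P$ on this domain. Here $P K^2 \in \sK(H_A)_\de$ by part (4) of the differentiable absorption theorem, so $P K^2 \zeta \in \sH_\de$ and the right-hand side is legitimate. Three points then need checking. \textbf{Well-definedness:} since $W$ is a unitary onto $P H_A$ and $P K^2$ maps into $P H_A$, the relation $W^* P K^2 \zeta = W^* P K^2 \zeta'$ forces $P K^2(\zeta-\zeta') = 0$, hence $\de(P K^2 \zeta) = \de(P K^2 \zeta')$ by $\cc$-linearity of $\de$ on $\sH_\de$; note it is essential that $\de$ is applied \emph{before} $W^*$, as $W$ itself need not have differentiable entries. \textbf{Density:} one has $W^* K^2 W = (W^* K W)^2$, because $W^*(1-P) = 0$ and $KP = PK$; by part (2) of the theorem $W^* K W$ is a positive operator with dense range, hence so is its square, and combined with density of $\sH_\de$ in $H_A$ this gives density of $\sX$ in $X$. \textbf{Module structure:} $\sH_\de$ is a right $\sA$-module, hence so is $\sX$. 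The Leibniz rule is then immediate: for $\xi = W^* P K^2 \zeta$ and $a \in \sA$ one has $\xi a = W^* P K^2(\zeta a)$, and applying $W^* \ot 1$ to $\de(P K^2 \zeta a) = \de(P K^2 \zeta)a + P K^2\zeta \, \de(a)$ yields $\Na_\de(\xi a) = \Na_\de(\xi)a + \xi \ot \de(a)$.

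The Hermitian identity is the heart of the matter. Write $\xi = W^* P K^2 \zeta$, $\eta = W^* P K^2 \omega$ and $T := P K^2$, which is selfadjoint with $T^2 = P K^4 \in \sK(H_A)_\de$. One first computes $\inn{\xi,\eta}_X = \inn{T\zeta,T\omega}_{H_A} = \inn{\zeta,T^2\omega}_{H_A}$ using the identification and selfadjointness. Applying the Hermitian identity for $\de$ on $\sH_\de$, then expanding $\de(T^2\omega) = \de(T)(T\omega) + T\de(T\omega)$, $\de(T\zeta)$ and $\de(T\omega)$ by Leibniz, and repeatedly moving $T$ across the $H_A$-inner product via selfadjointness, one is left after cancellation with two ingredients. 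The first is compatibility of the pairing with $W$, namely $\big( W^* u, (W^* \ot 1)v \big)_X = (u,v)_{H_A}$ for $u \in P H_A$, which lets one recognize the surviving terms as $(\xi,\Na_\de\eta)$ and $-(\eta,\Na_\de\xi)^*$. The second is the ``adjoint derivative'' identity
\[
\big( \zeta, \de(T)(T\omega) \big)_{H_A} = - \big( T\omega, \de(T)\zeta \big)_{H_A}^* ,
\]
valid for any selfadjoint $T \in \sK(H_A)_\de$; this is verified for finite selfadjoint matrices $T = \{t_{ij}\}$ over $\sA$ by a direct index computation using $\de(t_{ij}^*) = -\de(t_{ij})^*$ (compatibility of $\de$ with the adjoint), and extended by continuity in $\| \cd \|_\de$. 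Assembling these gives $\de\inn{\xi,\eta} = (\xi,\Na_\de\eta) - (\eta,\Na_\de\xi)^*$.

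I expect the principal obstacle to be the Hermitian computation itself: keeping track, without sign or placement errors, of how the regularizer $T = P K^2$ interacts with $\de$, with the two inner products, and with $W$, and in particular of the fact that \emph{both} factors in $K^2$ are needed so that $T$ and $T^2$ both lie in $\sK(H_A)_\de$. A secondary but genuine issue is setting up the differentiable standard module $\sH_\de$ and its derivation carefully enough that every step above — especially $P K^2 \zeta \in \sH_\de$ and the entrywise Leibniz and Hermitian identities — is rigorously justified rather than merely formal.
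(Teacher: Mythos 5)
Your construction is essentially identical to the paper's: the same submodule $\sX = \T{Im}\big( W^* K^2 : H_{A_\de} \to X \big)$, the same formula $\Na_\de = (W^* \ot 1) \circ \de \circ W$ (legitimate precisely because $P K^2 \in \sK(H_A)_\de$ and such operators preserve the differentiable standard module, so $W\xi \in H_{A_\de}$ for $\xi \in \sX$), and the same density and Leibniz arguments. The only divergence is your Hermitian verification, which detours through $T^2 = P K^4$ and an auxiliary adjoint-derivative identity for $\de(T)$; the paper obtains it in two lines by writing $\inn{\xi,\eta} = \inn{W\xi, W\eta}$ with $W\xi = \{a_n\}$, $W\eta = \{b_n\}$ in $H_{A_\de}$ and applying the entrywise identity $\de\big( \sum_n a_n^* b_n \big) = \sum_n \big( a_n^* \de(b_n) - \de(a_n)^* b_n \big)$ directly --- a simplification already available to you, since you establish the Hermitian property of the entrywise $\de$ as part of your setup.
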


We would like to emphasize that our notion of connection is different from previous notions of connections in noncommutative geometry, see \cite[Section 8]{CuQu:AEN}, \cite[Part II, Definition 18]{Con:NDG} and \cite[Definition 1.7]{Kar:HCK}. One of the main differences is here that the range of the connection, thus the Hilbert $C^*$-module $X \hot_A \Om_\de(A)$ is not defined algebraically (we have passed to a completion of the algebraic tensor product $X \ot_A \Om_\de(A)$). This is an important difference which allows us to deal with Hilbert $C^*$-modules which are not necessarily finitely generated projective. Notice also that the context of Hilbert $C^*$-modules also allows us to formulate the second condition of Hermitianness for our connections.
\medskip

With the Gra\ss mann connection $\Na_\de$ in hand we can make sense of the following operator at the \emph{algebraic level}:
\[
1 \ot_\Na D : \sX \ot_{\sA} \sD(D) \to X \hot_A Y \q 1 \ot_{\Na} D : \xi \ot \eta \mapsto \Na_\de(\xi)(\eta) + \xi \ot D(\eta)
\]
thus $\ot_{\sA}$ denotes the tensor product of modules over $\sA$, whereas $\hot_A$ denotes the interior tensor product of Hilbert $C^*$-modules. Let now $Y^\infty$ denote the Hilbert $C^*$-module of square-summable sequences in $Y$. In order to have a well-defined (and more manageable) unbounded operator we replace $1 \ot_\Na D$ with the contraction
\[
Q \cd \T{diag}(D) \cd Q : \sD\big( \T{diag}(D) Q \big) \to Q Y^\infty
\]
where $Q := P \ot 1 : Y^\infty \to Y^\infty$ is an orthogonal projection induced by $P : H_A \to H_A$ and $\T{diag}(D) : \T{diag}(D) \to Y^\infty$ is the diagonal operator induced by $D : \sD(D) \to Y$  We are interested in understanding the properties of the contraction $Q \cd \T{diag}(D) \cd Q$. More precisely, we investigate two fundamental questions: 
\begin{enumerate}
\item Is the closure of the contraction $Q \cd \T{diag}(D) \cd Q$ selfadjoint? 
\item Is the closure of the contraction $Q \cd \T{diag}(D) \cd Q$ regular? 
\end{enumerate}
In general, the contraction need not be essentially selfadjoint: Indeed, by analyzing our construction for the half-line, we see that $Q \cd \T{diag}(D) \cd Q$ provides a symmetric extension of the Dirac operator $i \frac{d}{dt} : C_c^\infty\big( (0,\infty) \big) \to L^2\big( (0,\infty) \big)$. This Dirac operator has no selfadjoint extensions due to a mismatch of the deficiency indices. We do not have a counterexample to regularity but we strongly believe that such an example exists.

In order to solve this lack of selfadjointness (and possibly also of regularity) we modify the contraction $Q \cd \T{diag}(D) \cd Q$ by multiplying it from the left and from the right with the positive selfadjoint bounded operator with dense image, $\De := Q (K^2 \ot 1) Q : Q Y^\infty \to Q Y^\infty$. We then obtain our third main result:

\begin{thm}\label{t:selregint}
Suppose that $W : X \to H_A$ and $K : H_A \to H_A$ satisfy the properties stated in the differentiable absorption theorem. Then the closure of the unbounded operator
\[
\De Q \cd \T{diag}(D) \cd Q \De : \sD\big( \T{diag}(D) Q \De \big) \to Q Y^\infty
\]
is selfadjoint and regular.
\end{thm}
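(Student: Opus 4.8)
\emph{Step 1 (reduction).} The plan is to peel the statement down to a clean assertion about conjugating a selfadjoint regular operator by a positive bounded operator, and then to prove that assertion. First I would use $KP = PK$ and $P^2 = P$ to rewrite $\De = Q(K^2 \ot 1) Q = (P K^2 P) \ot 1 = (P K^2) \ot 1$, so that $\De$ commutes with $Q$ and $Q \De = \De Q = \De$. Hence $\sD\big(\T{diag}(D) Q \De\big) = \{ v \in Y^\infty : \De v \in \sD(\T{diag}(D)) \}$ and on this domain $\De Q \cd \T{diag}(D) \cd Q \De$ equals $\De\, \T{diag}(D)\, \De$; the latter is symmetric, since $\T{diag}(D)$ is selfadjoint and $\De v, \De w$ lie in its domain by construction. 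Putting $S := \T{diag}(D)$, which is selfadjoint and regular on $Y^\infty$, and $G := \De$, the theorem reduces to: the closure of $GSG$ is selfadjoint and regular.

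\emph{Step 2 (input from differentiable absorption).} Two of the conclusions of the differentiable absorption theorem feed into this. Property $(4)$, $P K^2 \in \sK(H_A)_\de$, combined with the fact that $D$ implements $\de$ by commutators, gives that $b := [S, G]$ extends to a bounded operator on $Y^\infty$: the entrywise commutators $[D, (P K^2)_{ij}] = \de\big((P K^2)_{ij}\big)$ assemble into $\de(P K^2) \ot 1$, and $\de(P K^2)$ is bounded by the very definition of $\sK(H_A)_\de$; a standard closedness argument for $S$ then also yields $G\, \sD(S) \su \sD(S)$. Property $(2)$, that $W^* K W$ has dense image, gives that $G$ is injective: from $P K^2 = (P K)^2$ with $P K \ge 0$ one has $W^* K^2 W = (W^* K W)^2$, so $W^* K^2 W$ is a positive injective operator on $X$, and a spectral cut-off argument shows that tensoring a positive injective operator on $X$ with the identity on $Y$ keeps it injective on $X \hot_A Y = Q Y^\infty$. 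Note finally that $b^* = -b$ because $S$ and $G$ are selfadjoint.

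\emph{Step 3 (the conjugation lemma).} The reduced statement is an instance of: \emph{if $S$ is selfadjoint and regular, $G \ge 0$ is bounded and injective with $G\,\sD(S) \su \sD(S)$, and $b := [S, G]$ is bounded, then $\overline{GSG}$ is selfadjoint and regular.} The restriction of $GSG$ to the dense domain $\sD(S)$ is symmetric, so by the standard criterion (density of the ranges of a symmetric operator shifted by $\pm i\mu$ forces its closure to be selfadjoint and regular) it suffices to show $\T{Ran}(GSG + i\mu)$ is dense for some, hence all sufficiently large, $\mu$. Let $y \perp \T{Ran}(GSG + i\mu)$. Substituting $S G x = G S x + b x$ into the orthogonality relation and using $G^* = G$, one finds $\inn{Sx, G^2 y} = \inn{x, i\mu y + b G y}$ for all $x \in \sD(S)$, so $G^2 y \in \sD(S^*) = \sD(S)$ with $S(G^2 y) = i\mu y + b G y$ (here $b^* = -b$ is used). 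Pairing this with $G^2 y$, the left-hand side $\inn{S(G^2 y), G^2 y}$ is selfadjoint, while $\inn{y, G^2 y} = \inn{G y, G y} \ge 0$; taking imaginary parts therefore gives $\mu\, \inn{G y, G y} = \T{Im}\, \inn{b G y, G^2 y}$, and Cauchy--Schwarz yields $\mu \| G y \|^2 \le \| b \| \cd \| G \| \cd \| G y \|^2$. So $G y = 0$ whenever $\mu > \| b \| \cd \| G \|$, and then $y = 0$ by injectivity of $G$. The same argument with $-i\mu$ gives density of both ranges, whence $\overline{GSG}$ is selfadjoint and regular; being a selfadjoint extension of the symmetric operator $\De Q \cd \T{diag}(D) \cd Q \De$ on $\sD(\T{diag}(D) Q \De)$, it coincides with the closure of the latter, which is the claim.

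\emph{Where the difficulty sits.} The short estimate in Step 3 is where the positivity of $\De$ and the boundedness of $[S, \De]$ together ``regularize'' the contraction, so the real work is in Step 2: one must check carefully that $[\T{diag}(D), \De]$ closes up to a bounded operator --- in contrast with $[\T{diag}(D), Q]$, which does not (this is precisely why, as the half-line computation in the text shows, the bare compression $Q \cd \T{diag}(D) \cd Q$ can fail to have selfadjoint closure) --- and one must translate property $(2)$ into injectivity of $\De$ on the interior tensor product $Q Y^\infty$. Everything else is routine bookkeeping.
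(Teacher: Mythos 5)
Your Steps 1 and 2 are sound and coincide with the reductions the paper makes: $\De = (P K^2) \ot 1$ commutes with $Q$, property $(4)$ of Theorem \ref{t:difabs} together with the closedness of $\T{diag}(D)$ gives that $\De$ preserves $\sD(\T{diag}(D))$ and that $[\T{diag}(D), \De]$ extends to the bounded operator $\io(\de(P K^2))$ (this is exactly Lemma \ref{l:comcombou}), and the problem is reduced to the selfadjointness and regularity of $\T{cl}(x D x)$ for a bounded selfadjoint $x$ with bounded commutator. The gap is in Step 3. Your deficiency argument shows that every $y$ orthogonal to $\T{Ran}(GSG + i\mu)$ vanishes; in a Hilbert space this yields density of the range and hence essential selfadjointness, but in a Hilbert $C^*$-module the inference fails: a closed submodule with trivial orthogonal complement need not be dense, let alone equal to the whole module (consider $C_0\big((0,1]\big) \su C([0,1])$ as a submodule of $C([0,1])$ over itself — its orthogonal complement is $\{0\}$ but it is proper and closed). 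Since $\T{cl}(GSG) \pm i\mu$ is bounded below and has closed range, what you actually need is its \emph{surjectivity} (the criterion of \cite[Proposition 10.6]{Lan:HCM} quoted in Section \ref{s:symsel}), and triviality of the orthogonal complement does not deliver that. This is precisely the phenomenon that the notion of regularity encodes, and it is why the paper routes the argument through the local-global principle (Theorem \ref{t:locglopri}): the Hilbert-space computations are performed in each localization $X_\rho$, where orthogonal-complement reasoning is valid, and the principle transfers the conclusion back to the module level.

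The obvious repair — running your computation in every localization — also does not go through as written, because your final step uses the injectivity of $G = \De$ to pass from $G y = 0$ to $y = 0$, and injectivity does not localize: $\De$ has dense range but is not bounded below, so the localizations $\De_\rho$ will in general have nontrivial kernel (compare multiplication by $t$ on $C([0,1])$ localized at evaluation in $0$). The paper's proof of Proposition \ref{p:essregcom} avoids injectivity of $x$ altogether: it first identifies the adjoint exactly, $(Dx)^* = Dx - \de(x)$ (Lemmas \ref{l:incadj} and \ref{l:adjinc}), verifies the local-global criterion $\big( (Dx)_\rho \big)^* = \T{cl}\big( ((Dx)^*)_\rho \big)$ by computing both sides in each localization (Lemma \ref{l:loccloadj}), deduces regularity of $Dx$ and of $\T{cl}(xD)$, and then obtains $\T{cl}(x D x) = D x^2 - \de(x) x$ by the $2 \ti 2$ anti-diagonal matrix trick. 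Your estimate $\mu \, \| G y \|^2 \leq \| b \| \cd \| G \| \cd \| G y \|^2$ is correct as far as it goes, but it cannot be the crux of the matter, since the paper's result holds with no injectivity hypothesis on $x$ at all; the real work lies in justifying the passage from local to global, which Step 3 currently bypasses.
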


The plan of the present paper is as follows:
\medskip

In Section \ref{s:conabs} we provide a novel proof of the Kasparov absorption theorem. The usual proof consists of first stabilizing $X$ with the standard module $H_A$ and then construct a bounded adjointable operator $T : H_A \to X \op H_A$ such that both $T$ and $T^*$ have dense image. This yields a unitary isomorphism $H_A \cong X \op H_A$ by taking polar decompositions, see for example \cite[Theorem 2.3]{RaTh:KSF} or \cite[Theorem 1.4]{MiPh:ETH}. Another (and slightly more concrete) possibility is to apply a version of the Gram-Schmidt orthonormalization procedure to the generators of the Hilbert $C^*$-module (after stabilizing with the standard module), see for example \cite[Theorem 2]{Kas:HSV}. With both of these methods, it seems impossible to obtain any control on the growth of the derivative of the associated orthogonal projection $P$. Our new proof is straightforward and basically consists of choosing better and better approximations to the inverse of the infinite matrix
\[
G = \big\{ \inn{\xi_i,\xi_j} \big\} : H_A \to H_A
\]
induced by the sequence of generators. With this procedure, we do not need to stabilize $X$ by adding the standard module $H_A$.

In Section \ref{s:difabs} we give a proof of the differentiable absorption theorem. As noted above, this is only possible because our construction of the bounded adjointable isometry $W : X \to H_A$ is more explicit than the usual construction. The extra bounded operator $K : H_A \to H_A$ also has a simple description in terms of the generators of the Hilbert $C^*$-module (it is basically nothing but the operator $G$). 

In Section \ref{s:gracon} we apply the differentiable absorption theorem to construct a densely defined Gra\ss mann connection on the Hilbert $C^*$-module $X$, see Theorem \ref{t:graconint}.

In Section \ref{s:symsel} we investigate the properties of the associated symmetric lift $1 \ot_\Na D$ and we show that it need not be selfadjoint in general.

In Section \ref{s:comregunb} we analyze the following general question: Given a selfadjoint and regular operator $D : \sD(D) \to X$ and a bounded selfadjoint operator $x : X \to X$, what can we then say about the selfadjointness and regularity of the product $x D x$? This part relies on our earlier investigations with M. Lesch which led to a local-global principle for regular unbounded operators, see \cite{KaLe:LGR}.

In Section \ref{s:selreg} we provide a proof of Theorem \ref{t:selregint} which relies on the achievements of the preceding sections.
\medskip

Even though some of the concepts introduced in Section \ref{s:difabs} and Section \ref{s:gracon} are more naturally understood in the framework of operator modules we have decided to avoid this terminology in order to keep the exposition as simple as possible. We refer the reader who is interested in operator modules and their relation to Hilbert $C^*$-modules to the following two beautiful papers by D. Blecher, \cite{Ble:AHM, Ble:GHM}.

\subsection{Acknowledgements}
I wanted to thank Ludwik Dabrowski for our discussions on the example concerning the half-line and for his general encouragement.

\section{Continuous absorption}\label{s:conabs}
\emph{Throughout this section $X$ will be a countably generated Hilbert $C^*$-module over an arbitrary $C^*$-algebra $A$.}

Recall that the assumption ``$X$ is countably generated'' means that there exists a sequence $\{\xi_n\}_{n = 1}^\infty$ of elements in $X$ such that the $A$-span
\[
\T{span}_{A}\{ \xi_n \, | \, n \in \nn\} := \{ \sum_{n = 1}^N \xi_n \cd a_n \, | \, N \in \nn \, , \, a_n \in A\}
\]
is dense in $X$.

Let us fix such a sequence $\{ \xi_n \}$. Without loss of generality we may assume that the norm-estimate 
\begin{equation}\label{eq:norestgen}
\| \xi_n \| \leq \frac{1}{n}
\end{equation}
holds for all $n \in \nn$.

Let us denote the standard module over $A$ by $H_A$. Recall that $H_A$ consists of the sequences $\{a_n\}_{n = 1}^\infty$ in $A$ such that the sequence $\big\{ \sum_{n = 1}^N a_n^* a_n \}_{N = 1}^\infty$ converges in the norm on $A$. The inner product on $H_A$ is given by $\inn{ \{a_n\}, \{b_n\}} := \sum_{n = 1}^\infty a_n^* \cd b_n$ and the right action is given by $\{ a_n\} \cd a := \{ a_n \cd a\}$.

For each $N \in \nn$ define the compact operator $\Phi_N : X \to H_A$, $\Phi_N : \eta \mapsto \{ \inn{\xi_n,\eta} \}_{n = 1}^N$. The adjoint is given by $\Phi_N^* : H_A \to X$, $\Phi_N^* : \{a_n\}_{n = 1}^\infty \mapsto \sum_{n = 1}^N \xi_n \cd a_n$.

\begin{lemma}\label{l:geninc}
The sequence $\{ \Phi_N\}_{N = 1}^\infty$ converges in operator norm to a compact operator $\Phi : X \to H_A$. The adjoint $\Phi^* : H_A \to X$ coincides with the norm limit of the sequence $\{ \Phi_N^* \}_{N = 1}^\infty$.
\end{lemma}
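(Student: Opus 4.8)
The plan is to show directly that the sequence $\{\Phi_N\}_{N=1}^\infty$ is Cauchy in the operator norm on $\sL(X,H_A)$; compactness of the limit and the identification of its adjoint will then follow from soft facts. The only quantitative input is the normalization \eqref{eq:norestgen} together with the standard Hilbert $C^*$-module inequality $\inn{\eta,\xi}\cd\inn{\xi,\eta} \leq \|\xi\|^2 \cd \inn{\eta,\eta}$, valid for all $\xi,\eta \in X$.

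Concretely, fix $M > N$ in $\nn$ and $\eta \in X$. Since $(\Phi_M - \Phi_N)(\eta) = \{\inn{\xi_n,\eta}\}_{n=N+1}^M$ has only finitely many nonzero entries, its squared norm in $H_A$ is
\[
\| (\Phi_M - \Phi_N)(\eta) \|^2 = \Big\| \sum_{n=N+1}^M \inn{\eta,\xi_n}\cd\inn{\xi_n,\eta} \Big\| \leq \Big\| \sum_{n=N+1}^M \|\xi_n\|^2 \cd \inn{\eta,\eta} \Big\| \leq \Big( \sum_{n=N+1}^\infty \frac{1}{n^2} \Big)\| \eta \|^2 ,
\]
where the first inequality uses the inequality above together with positivity, and the second uses \eqref{eq:norestgen} and that $0 \leq a \leq b$ in a $C^*$-algebra forces $\|a\| \leq \|b\|$. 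Taking the supremum over $\eta$ in the unit ball of $X$ gives $\|\Phi_M - \Phi_N\|^2 \leq \sum_{n=N+1}^\infty n^{-2}$, and the right-hand side tends to $0$ as $N \to \infty$. Hence $\{\Phi_N\}$ is Cauchy and converges in operator norm to some $\Phi \in \sL(X,H_A)$.

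For the remaining claims: writing $e_n \in H_A$ for the $n$-th standard generator, one has $\Phi_N(\eta) = \sum_{n=1}^N e_n \cd \inn{\xi_n,\eta}$, so that $\Phi_N$ is a finite sum of generalized rank-one operators and is therefore compact; since $\sK(X,H_A)$ is norm-closed in $\sL(X,H_A)$, the limit $\Phi$ is compact as well. Finally, the adjoint operation $\sL(X,H_A) \to \sL(H_A,X)$, $T \mapsto T^*$, is isometric, hence continuous, so $\Phi_N^* \to \Phi^*$ in operator norm, which is precisely the asserted description of $\Phi^*$.

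There is no real obstacle here: the argument is a one-line estimate followed by the two standard facts that the compacts form a closed subspace and that the adjoint is norm-isometric. The only point requiring a little care is the passage from the $A$-valued operator inequality to the scalar norm bound, which is where the normalization $\|\xi_n\| \leq 1/n$ and the summability of $\sum n^{-2}$ enter.
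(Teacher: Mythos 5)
Your proof is correct and follows essentially the same route as the paper: the identical Cauchy estimate $\|\Phi_M-\Phi_N\|^2\leq\sum_{n=N+1}^M n^{-2}$ from the normalization \eqref{eq:norestgen}, with compactness of the limit and the convergence of adjoints then following from the norm-closedness of the compacts and the isometry of the adjoint map (facts the paper leaves implicit). No issues.
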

\begin{proof}
It is enough to show that the sequence $\{ \Phi_N \}_{N = 1}^\infty$ is a Cauchy sequence in operator norm. Thus, let $N,M \in \nn$ with $M \geq N$ be given. For each $\eta \in X$ we have that
\[
\begin{split}
\big\| \Phi_M(\eta) - \Phi_N(\eta) \big\|^2 
& = \big\| \{ \inn{ \xi_n,\eta}\}_{n = N + 1}^M \big\|^2 \\
& = \big\| \sum_{n = N + 1}^M \inn{\eta,\xi_n} \cd \inn{\xi_n,\eta} \big\|
\leq \| \eta\|^2 \cd \sum_{n = N + 1}^M \frac{1}{n^2}
\end{split}
\]
where we have applied the norm estimate in \eqref{eq:norestgen}. This computation shows that
\[
\big\| \Phi_M - \Phi_N \big\| \leq \sqrt{ \sum_{n = N + 1}^M \frac{1}{n^2}}
\]
The sequence $\{ \Phi_N \}_{N = 1}^\infty$ is therefore a Cauchy sequence in operator norm. 
\end{proof}

Define the positive compact operator
\[
G := \Phi \Phi^* : H_A \to H_A
\]
%
%
%
For each $n \in \nn$ define the positive selfadjoint operator 
\[
G_n := (G + 1/n)^{-1} : H_A \to H_A
\]
To ease the notation later on, let also $G_0 := 0$.

\begin{lemma}\label{l:appidecom}
The sequence $\big\{ \Phi^* G_n \Phi \}_{n = 1}^\infty$ converges strongly to the identity operator on $X$.
\end{lemma}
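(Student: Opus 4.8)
The plan is to show that $\Phi^* G_n \Phi \to 1_X$ strongly by first understanding the operator $\Phi^* G_n \Phi$ on elements of the form $\Phi^* \zeta$ and then using a density argument. The key identity is the following: since $G = \Phi \Phi^*$, for any polynomial (or continuous function vanishing suitably at $0$) one has $\Phi^* f(G) \Phi$ related to $f$ applied to $\Phi^* \Phi$ via the standard $\Phi \Phi^* / \Phi^* \Phi$ swap. Concretely, I would first prove that $\Phi^* G_n \Phi \cdot \Phi^* = \Phi^* \cdot G_n G = \Phi^* (G+1/n)^{-1} G$, using only $G_n = (G+1/n)^{-1}$ commutes with $G$ and $\Phi^* G = \Phi^* \Phi \Phi^*$ repeatedly; more carefully, $\Phi^* G_n \Phi \Phi^* = \Phi^* G_n G = \Phi^*\big(1 - \tfrac{1}{n} G_n\big)$. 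Hence on the range of $\Phi^*$ the operator $\Phi^* G_n \Phi$ acts as $1 - \tfrac{1}{n}\Phi^* G_n$ composed appropriately — this shows $\Phi^* G_n \Phi(\Phi^* \zeta) = \Phi^*\zeta - \tfrac{1}{n} \Phi^* G_n \zeta$ for every $\zeta \in H_A$.

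Next I would bound the error term. Since $G \geq 0$, functional calculus gives $\big\| \tfrac{1}{n} G_n \big\| = \big\| \tfrac{1}{n}(G+1/n)^{-1}\big\| \leq 1$ and more usefully $\tfrac{1}{n} G_n \to 0$ strongly on $H_A$ (indeed $\tfrac{1}{n}(G+1/n)^{-1} = 1 - G(G+1/n)^{-1}$, and $G(G+1/n)^{-1} \to$ the projection onto $\overline{\mathrm{range}}(G)$ strongly, while on $\ker G$ the term $\tfrac1n G_n$ equals $1$). The point that saves the argument is that $\ker(\Phi^*) $ is irrelevant: I claim $\overline{\mathrm{range}}(\Phi) \subseteq \overline{\mathrm{range}}(G)$ is not quite enough, so instead I argue directly on $\Phi^*\zeta$. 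We have $\big\| \tfrac{1}{n}\Phi^* G_n \zeta\big\| = \big\|\Phi^* (\tfrac1n G_n)\zeta\big\|$; writing $\tfrac1n G_n \zeta = (1 - G G_n)\zeta$ we get $\Phi^*(\tfrac1n G_n)\zeta = \Phi^*\zeta - \Phi^* G G_n \zeta = \Phi^*\zeta - \Phi^*\Phi\Phi^* G_n\zeta$. This is getting circular, so cleaner: estimate $\big\|\Phi^*(\tfrac1nG_n)\zeta\big\|^2 = \langle (\tfrac1nG_n)\zeta, \Phi\Phi^* (\tfrac1nG_n)\zeta\rangle = \tfrac{1}{n^2}\langle \zeta, G_n G G_n \zeta\rangle \leq \tfrac{1}{n^2}\|G_n\|\cdot\langle \zeta, GG_n\zeta\rangle \leq \tfrac{1}{n}\langle\zeta, GG_n\zeta\rangle \leq \tfrac1n\|G\|\,\|\zeta\|^2$ using $\|G_n\|\le n$ and $0 \le GG_n \le 1$. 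Hence $\Phi^* G_n\Phi(\Phi^*\zeta) \to \Phi^*\zeta$ for all $\zeta \in H_A$.

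Finally I would pass from the dense submodule to all of $X$ by a uniform-boundedness argument. Since $\Phi^* G_n \Phi \geq 0$ and $\Phi^* G_n \Phi = \Phi^* G^{1/2} (G+1/n)^{-1} G^{1/2}\Phi \le \Phi^*\Phi$ is wait — more simply, $0 \le G_n \le n$ but also $G_n G \le 1$ gives $\Phi^* G_n \Phi \le \Phi^*\Phi + $ nothing; the clean bound is $\|\Phi^* G_n \Phi\| \le \|\Phi\|^2 \|G_n\|$ which is not uniform. Instead use $\Phi^* G_n \Phi = (\Phi^* G^{1/2})\,(G+1/n)^{-1}(G^{1/2}\Phi)$ and $(G+1/n)^{-1} \le$ unbounded, so the right move is: $\langle \eta, \Phi^* G_n \Phi \eta\rangle = \langle \Phi\eta, G_n \Phi\eta\rangle = \langle \Phi\eta, (G+1/n)^{-1}\Phi\eta\rangle$; since $\Phi\eta \in \overline{\mathrm{range}}(\Phi) $, and on that closed submodule $G = \Phi\Phi^*$ is "bounded below in the strong sense" — more precisely $\langle \Phi\eta, G_n\Phi\eta\rangle = \langle \Phi\eta, G_n G G'\rangle$... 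The honest uniform bound comes from noting $G_n \le n \wedge (\text{bounded on range }G)$; I would instead simply establish uniform boundedness by $\|\Phi^* G_n \Phi\| \le 1$: indeed $G_n^{1/2} G G_n^{1/2} = G G_n \le 1$, so $\Phi^* G_n \Phi = \Phi^* G_n^{1/2}\cdot G_n^{1/2}\Phi$ and $\|G_n^{1/2}\Phi\|^2 = \|\Phi^* G_n\Phi\|$ — circular again, so I use: $\Phi^* G_n \Phi \le \Phi^* \|G_n\| \Phi$ is bad, but $\Phi\Phi^* = G$ commutes with $G_n$, hence $\Phi^* G_n \Phi$ and $G_n^{1/2}\Phi\Phi^* G_n^{1/2} = G_n G \le 1$ have the same norm (standard: $\|S^*S\| = \|SS^*\|$ with $S = G_n^{1/2}\Phi$), so $\|\Phi^* G_n \Phi\| \le 1$ for all $n$. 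With this uniform bound and strong convergence on the dense submodule $\mathrm{span}_A\{\Phi^*\zeta\}$ — which contains $\mathrm{span}_A\{\xi_n\}$ since $\Phi^* e_n = \xi_n$ for the standard generators $e_n$ of $H_A$, wait that needs checking but $\Phi^*(\{a_k\}) = \sum_k \xi_k a_k$ so indeed $\mathrm{range}(\Phi^*) \supseteq \mathrm{span}_A\{\xi_n\}$ is dense in $X$ — the standard $\varepsilon/3$ argument gives strong convergence $\Phi^* G_n \Phi \to 1_X$ on all of $X$. The main obstacle is keeping the functional-calculus manipulations with the unbounded-as-$n\to\infty$ family $G_n$ under control and extracting the clean uniform bound $\|\Phi^* G_n\Phi\|\le 1$; once that is in place the density argument is routine.
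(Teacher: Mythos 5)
Your argument is correct and is essentially the paper's own proof: the same resolvent identity $G_n G = 1 - \tfrac{1}{n}G_n$ applied on the dense range of $\Phi^*$ (the paper uses $\Phi(\xi_k\cd a) = G(e_k \cd a)$, which is the same computation), the same error estimate $\big\|\tfrac{1}{n}\Phi^* G_n\big\|^2 \leq \tfrac{1}{n}$, and the same uniform bound $\|\Phi^* G_n \Phi\| = \|G_n^{1/2}\Phi\Phi^* G_n^{1/2}\| = \|G(G+1/n)^{-1}\| \leq 1$ followed by the standard density argument. The only blemish is the final inequality $\inn{\zeta, G G_n \zeta} \leq \|G\|\cd\|\zeta\|^2$, which can fail when $\|G\| < 1$; the bound $\leq \|\zeta\|^2$ coming from $0 \leq G G_n \leq 1$, which you already recorded, is the one you want.
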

\begin{proof}
Let $k \in \nn$ and let $a \in A$. Apply the notation $e_k \cd a \in H_A$ for the sequence with zeroes everywhere except for the element $a$ in position $k$.

For each $n \in \nn$, we have that
\[
\begin{split}
& (\Phi^* G_n \Phi)(\xi_k \cd a) \\
& \q = \big( \Phi^* G_n\big)( \sum_{j = 1}^\infty e_j \cd \inn{\xi_j,\xi_k} \cd a \big)
= \big( \Phi^* G_n G \big)( e_k \cd a) \\
& \q = \big( \Phi^* (G + 1/n)^{-1} G \big)( e_k \cd a)
= \Phi^*(e_k \cd a) - 1/n \cd \big( \Phi^* (G + 1/n)^{-1} \big)(e_k \cd a) \\
& \q = \xi_k \cd a - 1/n \cd \big( \Phi^* (G + 1/n)^{-1} \big)(e_k \cd a)
\end{split}
\]

Thus, in order to show that $(\Phi^* G_n \Phi)(\xi_k \cd a) \to \xi_k \cd a$ it suffices to show that 
\[
\big\| 1/n \cd \Phi^* (G + 1/n)^{-1} \big\| \to 0
\]
To this end, we simply notice that
\[
\big\| 1/n \cd \Phi^* (G + 1/n)^{-1} \big\|^2 \leq \frac{1}{n^2} \cd \big\| (G + 1/n)^{-1} \cd G \cd (G + 1/n)^{-1} \big\| \leq 1/n
\]
for all $n \in \nn$. We have thus proved that $(\Phi^* G_n \Phi)(\eta) \to \eta$ for all $\eta \in \T{span}_A\{\xi_k \, | \, k \in \nn \}$. 

Therefore, since the $A$-span of the sequence $\{\xi_k\}_{k = 1}^\infty$ is dense in $X$ it is enough to show that the sequence $\{ \Phi^* G_n \Phi \}_{n = 1}^\infty$ is bounded in operator norm. But this follows from the estimate
\[
\big\| \Phi^* G_n \Phi \big\| = \big\| G_n^{1/2} \Phi \Phi^* G_n^{1/2} \big\|
= \big\| G \cd (1/n + G)^{-1} \big\| \leq 1
\]
which is valid for all $n \in \nn$.
\end{proof}

For each $n \in \nn$ define the compact operator $\Psi_n := (G_n - G_{n - 1})^{1/2} \Phi : X \to H_A$. Remark that the difference $G_n - G_{n-1}$ is positive and invertible for all $n \in \nn$, indeed
\[
\begin{split}
G_n - G_{n-1} 
& = \big(G + 1/n \big)^{-1} - \big( G + 1/(n-1)\big)^{-1} \\
& = \big(G + 1/n \big)^{-1} \cd \frac{1}{n\cd (n - 1)} \cd \big( G + 1/(n-1)\big)^{-1}
\end{split}
\]
for all $n \geq 2$. Notice also that the adjoint of $\Psi_n : X \to H_A$ is given by $\Psi_n^* = \Phi^* \cd (G_n - G_{n - 1})^{1/2} : H_A \to X$ for all $n \in \nn$.

For each Hilbert $C^*$-module $Y$ over a $C^*$-algebra $B$, let $Y^\infty$ denote the Hilbert $C^*$-module over $B$ which consists of all sequences $\{\eta_n\}_{n = 1}^\infty$ of elements in $Y$ such that the sum $\sum_{n = 1}^\infty \inn{\eta_n,\eta_n}$ is convergent in $B$. The inner product on $Y^\infty$ is given by $\inn{\{\eta_n\},\{\ze_n\}} := \sum_{n = 1}^\infty \inn{\eta_n,\ze_n}$. The right-module structure is given by $\{\eta_n\} \cd b := \{ \eta_n \cd b \}$. For each $\eta \in Y$ and each $n \in \nn$, we denote the sequence in $Y^\infty$ with $\eta$ in position $n$ and zeroes elsewhere by $e_n \cd \eta$.

\begin{lemma}\label{l:limexi}
The sequence $\big\{ \sum_{n = 1}^N e_n \cd \Psi_n(\eta) \big\}_{N = 1}^\infty$ converges in $H_A^\infty$ for all $\eta \in X$.
\end{lemma}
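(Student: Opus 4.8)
The plan is to prove convergence by showing that the partial sums $S_N := \sum_{n = 1}^N e_n \cd \Psi_n(\eta)$ form a Cauchy sequence in $H_A^\infty$ and then invoking completeness of $H_A^\infty$. Fix $\eta \in X$ and take natural numbers $M \geq N$. Since the vectors $e_n \cd \Psi_n(\eta)$ are supported in pairwise distinct positions, all cross terms in the inner product vanish, so by the defining formulas for the inner products on $H_A^\infty$ and on $H_A$ one gets
\[
\big\| S_M - S_N \big\|^2 = \Big\| \sum_{n = N+1}^M \inn{\Psi_n(\eta),\Psi_n(\eta)} \Big\| .
\]
Thus the whole question is reduced to estimating this finite sum of elements of $A$.

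The key algebraic observation is that $\Psi_n = (G_n - G_{n-1})^{1/2} \Phi$ with $(G_n - G_{n-1})^{1/2}$ selfadjoint, so that $\inn{\Psi_n(\eta),\Psi_n(\eta)} = \inn{\eta, \Phi^*(G_n - G_{n-1})\Phi(\eta)}$. Summing over $n$ from $N+1$ to $M$, the operators $\Phi^* G_n \Phi$ telescope, and one is left with $\inn{\eta, (\Phi^* G_M \Phi - \Phi^* G_N \Phi)(\eta)}$. Applying the Cauchy--Schwarz inequality for Hilbert $C^*$-modules then bounds the norm of this element by $\| \eta \| \cd \big\| \Phi^* G_M \Phi(\eta) - \Phi^* G_N \Phi(\eta) \big\|$.

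To conclude, I would appeal to Lemma \ref{l:appidecom}: since $\Phi^* G_n \Phi \to 1$ strongly on $X$, the sequence $\{ \Phi^* G_n \Phi(\eta) \}_{n=1}^\infty$ converges in $X$ and is therefore Cauchy, so $\big\| \Phi^* G_M \Phi(\eta) - \Phi^* G_N \Phi(\eta) \big\| \to 0$ as $N \to \infty$, uniformly in $M \geq N$. Combining with the displayed identity gives $\| S_M - S_N \| \to 0$, so $\{ S_N \}$ is Cauchy in $H_A^\infty$ and hence convergent.

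I do not expect a genuine obstacle here; the argument is short once Lemma \ref{l:appidecom} is available. The only points that require a little care are the verification that $\inn{\Psi_n(\eta),\Psi_n(\eta)}$ really is the $A$-valued inner product in the slot that makes the telescoping with $\Phi^* G_n \Phi$ legitimate (this is just selfadjointness of $(G_n - G_{n-1})^{1/2}$ together with $G = \Phi\Phi^*$), and the identification of the squared $H_A^\infty$-norm of a vector with disjointly supported components with the $A$-norm of the corresponding sum of inner products; both follow immediately from the definitions recalled just before the statement.
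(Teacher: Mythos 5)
Your proposal is correct and follows essentially the same route as the paper: reduce to the Cauchy property, use $\Psi_n^*\Psi_n = \Phi^*(G_n - G_{n-1})\Phi$ to telescope the sum to $\inn{\eta, \Phi^*(G_M - G_N)\Phi(\eta)}$, and conclude via Lemma \ref{l:appidecom}. The only difference is that you make the final step explicit through Cauchy--Schwarz, where the paper simply cites the lemma.
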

\begin{proof}
Let $\eta \in X$. We need to prove that the sequence $\big\{ \sum_{n = 1}^N e_n \cd \Psi_n(\eta) \big\}_{N = 1}^\infty$ is a Cauchy sequence in $H_A^\infty$.

Thus, let $M, N \in \nn$ with $M \geq N$ be given. We may then compute as follows,
\[
\begin{split}
& \big\| \sum_{n = N + 1}^M e_n \cd \Psi_n(\eta) \big\|^2
= \big\| \sum_{n = N + 1}^M \binn{ \Psi_n(\eta),\Psi_n(\eta) } \big\| \\
& \q = \big\| \sum_{n = N + 1}^M \binn{ \eta, \Phi^* (G_n - G_{n - 1}) \Phi (\eta) } \big\|
= \big\| \binn{\eta, \Phi^* (G_M - G_N) \Phi(\eta)} \big\|
\end{split}
\]
The result of the present lemma now follows by an application of Lemma \ref{l:appidecom}.
\end{proof}

Define the $A$-linear map $\Psi : X \to H_A^\infty$, $\Psi : \eta \mapsto \sum_{n = 1}^\infty e_n \cd \Psi_n(\eta)$. Remark that it follows from Lemma \ref{l:limexi} that the sum in the definition of $\Psi$ makes sense. 

\begin{prop}\label{p:isoinc}
\[
\inn{\Psi(\xi),\Psi(\eta)} = \inn{\xi,\eta} \q \T{for all } \xi,\eta \in X
\]
\end{prop}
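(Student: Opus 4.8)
The plan is to compute the inner product $\inn{\Psi(\xi),\Psi(\eta)}$ directly from the definition of $\Psi$ and the inner product on $H_A^\infty$, and to recognize a telescoping sum whose limit is governed by Lemma~\ref{l:appidecom}. First I would write, using the definition $\Psi(\eta) = \sum_{n=1}^\infty e_n \cd \Psi_n(\eta)$ and the orthogonality of the copies $e_n \cd H_A \su H_A^\infty$,
\[
\inn{\Psi(\xi),\Psi(\eta)} = \sum_{n=1}^\infty \inn{\Psi_n(\xi),\Psi_n(\eta)}
= \sum_{n=1}^\infty \binn{\xi, \Phi^*(G_n - G_{n-1})\Phi(\eta)},
\]
where in the second equality I insert $\Psi_n = (G_n - G_{n-1})^{1/2}\Phi$ together with the self-adjointness of $(G_n - G_{n-1})^{1/2}$, so that $\inn{\Psi_n(\xi),\Psi_n(\eta)} = \inn{\Phi(\xi),(G_n - G_{n-1})\Phi(\eta)} = \binn{\xi,\Phi^*(G_n-G_{n-1})\Phi(\eta)}$. (The convergence of this series of elements of $A$ is exactly what Lemma~\ref{l:limexi} guarantees, applied with $\xi$, $\eta$ and the polarization identity, or just directly from the Cauchy estimate there.)

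Next I would observe that the partial sums telescope: $\sum_{n=1}^N \Phi^*(G_n - G_{n-1})\Phi = \Phi^* G_N \Phi$ because $G_0 = 0$. Hence
\[
\inn{\Psi(\xi),\Psi(\eta)} = \lim_{N\to\infty} \binn{\xi, \Phi^* G_N \Phi(\eta)}.
\]
By Lemma~\ref{l:appidecom}, the sequence $\{\Phi^* G_N \Phi\}$ converges strongly to the identity on $X$, so $\Phi^* G_N \Phi(\eta) \to \eta$ in norm, and since the inner product is norm-continuous in the second variable we conclude $\inn{\Psi(\xi),\Psi(\eta)} = \inn{\xi,\eta}$.

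There is no serious obstacle here; the statement is essentially a bookkeeping consequence of the telescoping identity and Lemma~\ref{l:appidecom}. The one point that requires a little care is the interchange of the (convergent) infinite sum with the inner product — i.e.\ justifying $\inn{\Psi(\xi),\Psi(\eta)} = \sum_n \inn{\Psi_n(\xi),\Psi_n(\eta)}$ — but this is immediate from the continuity of the inner product on $H_A^\infty$ together with the fact, established in Lemma~\ref{l:limexi}, that the partial sums $\sum_{n=1}^N e_n\cd\Psi_n(\cdot)$ converge in $H_A^\infty$. So the only genuine input is the strong convergence in Lemma~\ref{l:appidecom}.
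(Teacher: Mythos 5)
Your argument is exactly the paper's proof: expand $\inn{\Psi(\xi),\Psi(\eta)}$ as $\sum_n \inn{\Psi_n(\xi),\Psi_n(\eta)}$, rewrite each term as $\inn{\xi,\Phi^*(G_n-G_{n-1})\Phi(\eta)}$, telescope the partial sums to $\inn{\xi,\Phi^* G_N\Phi(\eta)}$, and conclude via the strong convergence in Lemma~\ref{l:appidecom}. The extra care you take in justifying the interchange of the sum with the inner product (via Lemma~\ref{l:limexi}) is sound and only makes explicit what the paper leaves implicit.
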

\begin{proof}
Let $\xi,\eta \in X$. By Lemma \ref{l:appidecom} we have that
\[
\begin{split}
\inn{\Psi(\xi),\Psi(\eta)} 
& = \sum_{n = 1}^\infty \inn{\Psi_n(\xi),\Psi_n(\eta)} 
= \sum_{n = 1}^\infty \inn{\xi,\Phi^* (G_n - G_{n-1}) \Phi (\eta)} \\
& = \lim_{N \to \infty} \inn{\xi,(\Phi^* G_N \Phi)(\eta)} 
= \inn{\xi,\eta}
\end{split}
\]
This proves the proposition.
\end{proof}

It follows from the above proposition that $\Psi : X \to H_A^\infty$ is bounded (it is in fact an isometry). To construct the adjoint, define the $A$-linear map $\Psi^* : \op_{n=1}^\infty H_A \to X$, $\Psi^* : \sum_{n = 1}^\infty e_n \cd x_n \mapsto \sum_{n = 1}^\infty \Psi_n^*(x_n)$, where $\op_{n = 1}^\infty H_A$ denotes the dense $A$-submodule in $H_A^\infty$ consisting of all \emph{finite} sequences in $H_A$. It then follows from the above proposition that
\[
\big\| \inn{\Psi^*(\sum_{n = 1}^\infty e_n \cd x_n),\xi} \big\| = \big\|  \inn{\sum_{n = 1}^\infty e_n \cd x_n, \Psi(\xi)} \big\| 
\leq \big\| \sum_{n = 1}^\infty e_n \cd x_n \| \cd \| \xi \|
\]
for all $\sum_{n = 1}^\infty e_n \cd x_n \in \op_{n = 1}^\infty H_A$ and all $\xi \in X$. This implies that $\Psi^* : \op_{n = 1}^\infty H_A \to X$ extends to a bounded $A$-linear map $\Psi^* : H_A^\infty \to X$ and it is not hard to see that this operator is the adjoint of $\Psi : X \to H_A^\infty$.

The next proposition now follows immediately from Proposition \ref{p:isoinc}.

\begin{prop}\label{p:riginv}
\[
\Psi^* \Psi = 1_X : X \to X
\]
\end{prop}

Let $\al : \nn \to \nn \ti \nn$, $\al(n) = (\al_1(n),\al_2(n))$ be a bijection. We then have an associated unitary isomorphism of Hilbert $C^*$-modules $U_\al : H_A \to H_A^\infty$ defined by
\begin{equation}\label{eq:intuni}
U_\al :  e_n \cd a \mapsto e_{\al_1(n)} \cd ( e_{\al_2(n)} \cd a)
\end{equation}

The continuous absorption theorem can now be stated and proved:

\begin{thm}\label{t:conabs}
There exists a bounded adjointable isometry $W : X \to H_A$.
\end{thm}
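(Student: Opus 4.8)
The plan is to combine the isometry $\Psi : X \to H_A^\infty$ of Proposition \ref{p:isoinc} with the unitary reshuffling $U_\al : H_A \to H_A^\infty$ of \eqref{eq:intuni}. Concretely, having fixed any bijection $\al : \nn \to \nn \ti \nn$, one simply sets
\[
W := U_\al^{-1} \Psi : X \to H_A.
\]
Since $\Psi^* \Psi = 1_X$ by Proposition \ref{p:riginv} and $U_\al$ is a unitary (so $U_\al^* U_\al = 1$), we get $W^* W = \Psi^* U_\al U_\al^{-1} \Psi = \Psi^* \Psi = 1_X$, which says precisely that $W$ is a bounded adjointable isometry. This is essentially immediate from the material already assembled.

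The only point that deserves a word is why $U_\al$ is a well-defined unitary (adjointable) isomorphism $H_A \to H_A^\infty$. The map in \eqref{eq:intuni} is defined on the dense submodule of finite sequences and manifestly preserves inner products there — it merely relabels the standard basis elements $e_n \cd a$ by the pair $(\al_1(n), \al_2(n))$, and since $\al$ is a bijection this is a norm-preserving $A$-linear bijection between $\op_{n=1}^\infty A \su H_A$ and $\op_{n=1}^\infty H_A \su H_A^\infty$ (viewing $H_A^\infty$ as square-summable sequences of square-summable sequences, which one identifies with $H_A$-valued sequences). Hence it extends by continuity to an isometry $U_\al : H_A \to H_A^\infty$ whose inverse is the extension of the relabelling by $\al^{-1}$; both extensions are adjointable with $U_\al^{-1} = U_\al^*$.

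I would therefore structure the proof as: (i) recall that $U_\al$ extends to a unitary isomorphism $H_A \cong H_A^\infty$; (ii) set $W := U_\al^{-1} \Psi$ and note it is bounded and adjointable as a composite of such maps, with adjoint $W^* = \Psi^* U_\al$; (iii) compute $W^* W = \Psi^* U_\al U_\al^{-1} \Psi = \Psi^* \Psi = 1_X$ using Proposition \ref{p:riginv}. There is no real obstacle here — the work has all been done in Lemmas \ref{l:geninc}–\ref{l:limexi} and Propositions \ref{p:isoinc}–\ref{p:riginv}; this theorem is the harvesting step. (If one wanted $W$ landing in $H_A$ more explicitly, one could alternatively just observe that $H_A^\infty \cong H_A$ canonically and absorb the isomorphism into the definition of the generators, but the $U_\al$ route is cleanest.) The genuinely delicate part of the paper — controlling $\de(P)$ for $P = WW^*$ — is deferred to the differentiable absorption theorem in the next section, where the explicit description of $W$ and of $G$ is what makes the estimates feasible.
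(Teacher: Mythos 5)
Your proof is correct and is exactly the paper's: the author also sets $W := U_\al^* \Psi$ and deduces the isometry property directly from Proposition \ref{p:riginv}. The extra remarks you include on why $U_\al$ extends to a unitary $H_A \to H_A^\infty$ are accurate and merely flesh out what the paper takes for granted.
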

\begin{proof}
Define the bounded adjointable operator $W := U_\al^* \Psi : X \to H_A$. The result of the theorem then follows immediately from Proposition \ref{p:riginv}.
\end{proof}

Notice that $P := W W^* : H_A \to H_A$ is an orthogonal projection and that $W$ induces a unitary isomorphism of Hilbert $C^*$-modules $W : X \to P H_A$ where $P H_A \su H_A$ has inherited the structure of a Hilbert $C^*$-module from $H_A$.

The result of Theorem \ref{t:conabs} can be strengthened slightly. Indeed, we have the following proposition (which is non-trivial since we are in a non-unital setting):

\begin{prop}\label{p:confra}
There exists a sequence $\{\ze_k\}_{k = 1}^\infty$ of elements in $X$ such that
\[
W(\eta) = \{ \inn{\ze_k,\eta} \}_{k = 1}^\infty \q \T{for all } \eta \in X
\]
\end{prop}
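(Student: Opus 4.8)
The idea is to extract the desired sequence $\{\ze_k\}$ directly from the operator $W : X \to H_A$ by testing against the standard basis of $H_A$. Since $W$ is bounded and adjointable, for each $k \in \nn$ the composition $e_k^* \circ W : X \to A$ — where $e_k^* : H_A \to A$ picks out the $k$-th coordinate — is a bounded $A$-linear functional on $X$. The point is to represent this functional by an element of $X$ via the adjoint $W^*$: set $\ze_k := W^*(e_k \cd 1_A)$ if $A$ is unital. The catch, as the paper explicitly flags, is that we are in a possibly non-unital setting, so $e_k \cd 1_A$ is not available. Instead, for an approximate unit $\{u_\la\}$ of $A$ one considers $W^*(e_k \cd u_\la)$ and shows this converges in $X$ as $\la \to \infty$.

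First I would make precise the coordinate functionals: for $\eta \in X$, write $W(\eta) = \{b_k(\eta)\}_{k=1}^\infty \in H_A$ with $b_k(\eta) \in A$, so that $\|W(\eta)\|^2 = \|\sum_k b_k(\eta)^* b_k(\eta)\|$. Since $W$ is an isometry this equals $\|\eta\|^2$, which gives the uniform bound $\|b_k(\eta)\| \le \|\eta\|$ and more importantly summability control. Next I would verify that $\ze_k := \lim_\la W^*(e_k \cd u_\la)$ exists: using $W^* W = 1$ one has $\inn{W^*(e_k \cd u_\la), \eta} = \inn{e_k \cd u_\la, W(\eta)} = u_\la^* \cd b_k(\eta) = u_\la \cd b_k(\eta)$, which converges to $b_k(\eta)$ for each fixed $\eta$; combined with the norm bound $\|W^*(e_k \cd u_\la) - W^*(e_k \cd u_{\la'})\| = \|W^*(e_k \cd (u_\la - u_{\la'}))\| \le \|e_k \cd (u_\la - u_{\la'})\| = \|u_\la - u_{\la'}\|_{A}$-type estimates, one shows the net $\{W^*(e_k \cd u_\la)\}_\la$ is Cauchy in $X$, hence convergent to some $\ze_k \in X$. (One should check $e_k \cd u_\la$ is itself Cauchy in $H_A$, which is immediate, and then apply boundedness of $W^*$.) By construction $\inn{\ze_k, \eta} = \lim_\la u_\la \cd b_k(\eta) = b_k(\eta)$, which is exactly the claim $W(\eta) = \{\inn{\ze_k,\eta}\}_{k=1}^\infty$.

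The main obstacle I anticipate is precisely the non-unitality: showing that $W^*(e_k \cd u_\la)$ actually converges in norm in $X$ (not merely weakly, i.e.\ coordinatewise in inner products), since without this one only gets a "weak frame" rather than genuine elements $\ze_k \in X$. The resolution is that $e_k \cd u_\la$ is a norm-Cauchy net in $H_A$ — indeed $\|e_k \cd (u_\la - u_\mu)\| = \|u_\la - u_\mu\|$ need not be small, so one must be slightly more careful: $\{u_\la\}$ is an approximate unit, which is bounded and increasing but not norm-Cauchy. The correct argument is that $W^*(e_k \cd u_\la) = W^* W (\text{something})$? No — rather, one notes $\inn{W^*(e_k\cd u_\la) - W^*(e_k \cd u_\mu), W^*(e_k \cd u_\la) - W^*(e_k \cd u_\mu)} = \inn{e_k \cd(u_\la - u_\mu), P(e_k \cd(u_\la-u_\mu))}$ where $P = WW^*$, and this is $\le (u_\la - u_\mu) P_{kk} (u_\la - u_\mu)$ with $P_{kk} \in A_+$ the $(k,k)$ entry of $P$; since $P_{kk}^{1/2} u_\la \to P_{kk}^{1/2}$ in norm (standard fact: $a^{1/2} u_\la \to a^{1/2}$ for $a \in A_+$ and any approximate unit), this net is Cauchy. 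This is the delicate point, and it is exactly why the proposition is "non-trivial in the non-unital setting." Once $\ze_k$ is constructed the remaining verifications are routine continuity arguments.
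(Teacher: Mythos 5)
Your strategy---defining $\ze_k$ as the limit of the net $W^*(e_k \cd u_\la)$ over an approximate unit---is a legitimate alternative to the paper's argument, and you correctly isolate the one point where non-unitality bites. But that point is exactly where your proof has a genuine gap: you assert that the $(k,k)$ matrix entry $P_{kk}$ of $P = WW^*$ lies in $A_+$, and everything hinges on this, since the standard fact $a^{1/2}u_\la \to a^{1/2}$ holds for $a \in A$ positive but fails for a general positive element of the multiplier algebra $M(A)$. For a bounded adjointable operator on $H_A$ the matrix entries a priori live only in $M(A)$, and for a general adjointable isometry $W$ the claim $P_{kk} \in A$ is simply false: take $X = A = C_0(\rr)$ and $W$ the inclusion of $A$ into the first coordinate of $H_A$; then $P_{11} = 1 \in M(A)\setminus A$, your net $W^*(e_1 \cd u_\la) = u_\la$ does not converge in $A$, and the conclusion of the proposition genuinely fails for this $W$ (one would need $\ze_1$ with $\ze_1^*\eta = \eta$ for all $\eta$). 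So ``$P_{kk} \in A_+$'' cannot be read off for free; it is precisely the nontrivial content of the proposition and must be extracted from the particular $W$ constructed in Section \ref{s:conabs}. As written, your argument uses nothing about $W$ beyond $W^*W = 1_X$, so it proves too much and cannot be complete.

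The missing ingredient is the compactness built into the construction. Under the unitary $U_\al$, the block entries of $\Psi\Psi^*$ are $\Psi_n\Psi_m^* = \sqrt{G_n - G_{n-1}}\, G\, \sqrt{G_m - G_{m-1}}$, which are compact operators on $H_A$ because $G = \Phi\Phi^*$ is compact (Lemma \ref{l:geninc}); matrix entries of compact operators on $H_A$ do lie in $A$, so $P_{kk} \in A_+$ after all, and with this supplement your Cauchy-net argument closes. The paper short-circuits all of this by working one coordinate at a time: $P_m \Psi_n = P_m\sqrt{G_n - G_{n-1}}\,\Phi : X \to A$ is compact since $\Phi$ is, and every compact operator from $X$ to $A$ has the form $\eta \mapsto \inn{\nu,\eta}$ for a single $\nu \in X$ (the identification $\sK(X,A)\cong X$), which delivers the frame elements directly with no approximate units. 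Both routes rest on the same essential input, the compactness of $\Phi$; yours hides it inside the unproved claim $P_{kk}\in A$, and until that claim is justified the proof is incomplete.
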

\begin{proof}
It suffices to fix an $n \in \nn$ and find a sequence $\{ \nu_m\}_{m = 1}^\infty$ in $X$ such that
\[
\Psi_n(\eta) = \{ \inn{\nu_m,\eta}\}_{m = 1}^\infty \q \T{for all } \eta \in X
\]
To find the elements $\nu_m \in X$, let us also fix an $m \in \nn$ and consider the bounded adjointable operator $P_m : H_A \to A$, $P_m : \sum_{k = 1}^\infty e_k a_k \mapsto a_m$. We then have that
\[
P_m \Psi_n = P_m \sqrt{G_n - G_{n-1}} \Phi
\]
Notice now that the bounded adjointable operator $P_m \sqrt{G_n - G_{n-1}} \Phi : X \to A$ is compact (since $\Phi : X \to H_A$ is compact). As a consequence, there exists an element $\nu_m \in X$ with
\[
(P_m \sqrt{G_n - G_{n-1}} \Phi)(\eta) = \inn{\nu_m,\eta} \q \T{for all } \eta \in X
\]
This proves the proposition.
\end{proof}

\begin{remark}
The sequence $\{\ze_k\}_{k = 1}^\infty$ in $X$ which implements $W : X \to H_A$ is a ``standard normalized tight frame'' in the terminology of M. Frank and D. R. Larson, see \cite[Definition 2.1]{FrLa:FHM} (notice however that we never assume that $A$ is unital).
\end{remark}

\section{Differentiable absorption}\label{s:difabs}
Let $X$ be a countably generated Hilbert $C^*$-module over a $C^*$-algebra $A$. Furthermore, let $B$ be a $C^*$-algebra and let $\rho : A \to B$ be an injective $*$-homomorphism.

The ``differentiable structure'' on $A$ will come in the form of a dense $*$-subalgebra $\sA \su A$ and a linear map $\de : \sA \to B$ such that
\[
\de(a_1 \cd a_2) = \de(a_1) \cd \rho(a_2) + \rho(a_1) \cd \de(a_2) \q \T{and} \q
\de(a^*) = - \de(a)^*
\]
for all $a, a_1,a_2 \in \sA$. The derivation $\de : \sA \to B$ is required to be \emph{closed}. Thus, whenever $\{a_n\}$ is a sequence in $\sA$ such that $\de(a_n) \to b$ and $a_n \to 0$ for some $b \in B$ we may conclude that $b = 0$.

We let $A_\de$ denote the completion of $\sA$ with respect to the norm
\[
\| \cd \|_\de : \sA \to [0,\infty) \q \| a \|_\de := \| a \| + \| \de(a)\| 
\]
It follows by closedness that $\de : \sA \to B$ extends to a well-defined derivation $\de : A_\de \to B$. Remark that $\| a^* \|_\de = \| a \|_\de$ for all $a \in A_\de$, but that the $C^*$-identity does not hold for the norm $\| \cd \|_\de$.

The countably generated Hilbert $C^*$-module $X$ is assumed to be compatible with the differentiable structure on $A$ by the following condition: There exists a sequence $\{\xi_n\}_{n = 1}^\infty$ in $X$ such that
\[
\inn{\xi_n,\xi_m} \in \sA \q \T{for all } n,m \in \nn
\]
and such that $\T{span}_{A}\{ \xi_n \, | \, n \in \nn \}$ is dense in $X$.

Without loss of generality, we may then assume that
\begin{equation}\label{eq:norestinn}
\| \inn{\xi_n,\xi_m} \|_\de \leq \frac{1}{n^2 \cd m^2} \q \T{for all } n,m \in \nn
\end{equation}
\vspace{5pt}

\emph{The conditions stated above will remain in effect throughout this section.}
\bigskip

Let $M_\infty(\sA)$ denote the $*$-algebra of all finite matrices over $\sA$. We will think of $M_\infty(\sA)$ as a dense $*$-subalgebra of the compact operators $\sK(H_A)$ on the Hilbert $C^*$-module $H_A$. There is a unique injective $*$-homomorphism $\rho : \sK(H_A) \to \sK(H_B)$ such that $\rho( \{ a_{ij}\}) = \{ \rho(a_{ij}) \}$ for all finite matrices $\{a_{ij}\} \in M_\infty(\sA)$. Likewise, we may extend $\de : \sA \to B$ to a closed derivation $\de : M_\infty(\sA) \to \sK(H_B)$.

We will apply the notation $\sK(H_A)_\de$ for the Banach $*$-algebra obtained as the completion of $M_\infty(\sA)$ with respect to the norm $\| \cd \|_\de : a \mapsto \| a \| + \| \de(a) \|$.
%

The unitalization of $\sK(H_A)_\de$ is denoted by $\wit{\sK(H_A)_\de}$. This unital $*$-algebra becomes a unital Banach $*$-algebra when equipped with the norm $\| \cd \|_\de : \wit{\sK(H_A)_\de} \to [0,\infty)$, $\| (a,\la) \|_\de := \| a + \la \| + \|\de(a)\|$. Here we are thinking of $a + \la$ as a bounded adjointable operator on the standard module $H_A$. Notice that our $*$-homomorphism $\rho : \sK(H_A) \to \sK(H_B)$ can be extended uniquely to a unital $*$-homomorphism $\rho : \wit{\sK(H_A)} \to \sL(H_B)$ and that our derivation $\de : M_\infty(\sA) \to \sK(H_B)$ can be extended uniquely to a closed derivation $\de : \wit{\sK(H_A)_\de} \to \sL(H_B)$ such that $\de\big( (0,\la)\big) = 0$ for all $\la \in \cc$.

We are now ready to prove the first result of this section:

\begin{lemma}
The sequence of finite matrices $\big\{ \{\inn{\xi_n,\xi_m}\}_{n,m = 1}^N\big\}_{N = 1}^\infty$ converges to an element $G \in \wit{\sK(H_A)_\de}$ with positive spectrum.
\end{lemma}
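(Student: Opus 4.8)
The plan is to show convergence of the partial sums $G_N := \{\inn{\xi_n,\xi_m}\}_{n,m=1}^N$ in the norm $\|\cd\|_\de$ on $M_\infty(\sA)$, and then to identify the limit as the element $G = \Phi\Phi^*$ already constructed in Section \ref{s:conabs}, whose positivity is clear; the only genuinely new point is that its spectrum avoids $0$ being excluded — actually we only need the spectrum to be \emph{positive} in the sense of lying in $[0,\infty)$, which follows since $G$ is a positive operator and passing to $\wit{\sK(H_A)_\de}$ does not change the spectrum (the norm $\|\cd\|_\de$ dominates the operator norm, so the spectral radius is controlled and the inclusion $\wit{\sK(H_A)_\de}\to \sL(H_A)$ is spectrum-preserving on the relevant element once we know $G$ is a limit in $\wit{\sK(H_A)_\de}$).

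First I would estimate $\|G_M - G_N\|_\de$ for $M\ge N$. Writing $G_M - G_N$ as the matrix supported on indices where $\max(n,m) > N$, I would bound both $\|G_M-G_N\|$ and $\|\de(G_M - G_N)\|$. For the operator-norm part, the block-matrix of entries $\inn{\xi_n,\xi_m}$ with $\max(n,m)>N$ is dominated (as a positive operator, via a Schur-type / Cauchy–Schwarz argument, or simply because it equals $\Phi_M\Phi_M^* - \Phi_N\Phi_N^*$ up to the relevant truncation) by a tail that is controlled by $\sum_{n>N} \|\xi_n\|^2$, which is summable by the normalization $\|\inn{\xi_n,\xi_m}\|_\de \le \tfrac1{n^2 m^2}$ (so in particular $\|\xi_n\|^2 = \|\inn{\xi_n,\xi_n}\| \le \tfrac1{n^4}$). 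For the derivative part, I would use the Leibniz rule $\de(\inn{\xi_n,\xi_m})$ directly and the hypothesis $\|\inn{\xi_n,\xi_m}\|_\de \le \tfrac{1}{n^2 m^2}$, which gives $\|\de(\inn{\xi_n,\xi_m})\| \le \tfrac{1}{n^2m^2}$, so the matrix $\de(G_M-G_N) = \{\de(\inn{\xi_n,\xi_m})\}$ restricted to $\max(n,m)>N$ has entrywise $\ell^1$-type bounds; I would dominate its norm by $\big(\sum_{n} \tfrac1{n^2}\big)\cdot\big(\sum_{m>N}\tfrac1{m^2}\big)^{1/2}$ or a similar product of tails via the standard estimate $\|\{b_{nm}\}\| \le \big(\sup_n \sum_m \|b_{nm}\|\big)^{1/2}\big(\sup_m \sum_n \|b_{nm}\|\big)^{1/2}$. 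Both contributions tend to $0$ as $N\to\infty$, so $\{G_N\}$ is Cauchy in $\|\cd\|_\de$ and hence converges to some $G \in \sK(H_A)_\de \su \wit{\sK(H_A)_\de}$.

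Next I would identify this limit: since $\|\cd\|_\de$ dominates the operator norm, the $\|\cd\|_\de$-limit of $G_N$ is also the operator-norm limit, and $G_N = \Phi_N\Phi_N^* \to \Phi\Phi^* = G$ by Lemma \ref{l:geninc}. Thus the new $G$ agrees with the old one, it is a positive operator on $H_A$, and its spectrum as an element of the unital Banach $*$-algebra $\wit{\sK(H_A)_\de}$ coincides with its spectrum as an element of $\sL(H_A)$ — here I would invoke that a $\|\cd\|_\de$-convergent power series / resolvent argument shows the inclusion $\wit{\sK(H_A)_\de}\hookrightarrow \sL(H_A)$ is \emph{spectrum-preserving}, i.e. $\wit{\sK(H_A)_\de}$ is a spectral invariant (closed under holomorphic functional calculus) subalgebra. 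This is because for $\la$ in the resolvent set of $G$ in $\sL(H_A)$, the resolvent $(G-\la)^{-1}$ lies again in $\wit{\sK(H_A)_\de}$, which one sees from the closedness of $\de$ together with the identity $\de\big((G-\la)^{-1}\big) = -(G-\la)^{-1}\de(G)(G-\la)^{-1}$. Hence $\spec_{\wit{\sK(H_A)_\de}}(G) = \spec_{\sL(H_A)}(G) \su [0,\infty)$, which is the assertion.

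The main obstacle I anticipate is the second-order decay bookkeeping: one must be careful that the normalization $\|\inn{\xi_n,\xi_m}\|_\de \le \tfrac1{n^2m^2}$ really can be arranged (by rescaling each $\xi_n$ by a scalar $t_n\to 0$, which multiplies $\inn{\xi_n,\xi_m}$ by $\overline{t_n}t_m$ and is compatible with $\sA$ being a $*$-algebra), and that the Schur-test bounds for the off-diagonal matrices genuinely decay in $N$ — the derivative matrix $\de(G)$ is not positive, so one cannot use the positive-operator trick there and must rely on the $\ell^1$-Schur estimate, which is exactly where the \emph{double} (rather than single) power decay in $n$ and $m$ is needed. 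Everything else — positivity of $G$, the spectral-invariance argument — is routine given the closedness of $\de$ and the results already proved in Section \ref{s:conabs}.
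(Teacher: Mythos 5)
Your proposal is correct and follows essentially the same route as the paper: a Cauchy estimate for the truncations in the norm $\| \cd \|_\de$ driven entirely by the normalization $\| \inn{\xi_n,\xi_m} \|_\de \leq \frac{1}{n^2 m^2}$, combined with a spectral-invariance argument (closedness of $\de$ plus the resolvent identity, i.e.\ the Blackadar--Cuntz mechanism) to transfer positivity of the spectrum from the $C^*$-level to $\wit{\sK(H_A)_\de}$. The only cosmetic differences are that the paper bounds the whole $\| \cd \|_\de$-norm of the tail by the crude entrywise $\ell^1$ sum (rather than splitting into a positive-operator estimate and a Schur estimate) and applies spectral invariance to the finite truncations in $\wit{M_N(A_\de)}$ rather than to the limit $G = \Phi\Phi^*$ directly.
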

\begin{proof}
We first remark that $\big\{ \inn{\xi_n,\xi_m}\big\}_{n,m = 1}^N$ determines a positive element in the $C^*$-algebra $M_N(A)$ for all $N \in \nn$.

Next, we notice that the spectrum of an element $a$ in the unital Banach algebra $\wit{M_N(A_\de)}$ agrees with the spectrum of $a$ as an element in the unital $C^*$-algebra $\wit{M_N(A)}$. This is a consequence of spectral invariance, see \cite[Proposition 3.12]{BlCu:DNS}.

These observations imply that $\big\{ \inn{\xi_n,\xi_m} \big\}_{n,m = 1}^N \in \wit{M_N(A_\de)}$ has positive spectrum for all $N \in \nn$. It is therefore enough to show that the sequence $\big\{ \{\inn{\xi_n,\xi_m} \}_{n,m = 1}^N \big\}_{N = 1}^\infty$ is Cauchy in $\wit{\sK(H_A)_\de}$.

To this end, let $N,M \in \nn$ with $M \geq N$ be given and notice that
\[
\begin{split}
& \big\| \big\{\inn{\xi_n,\xi_m}\big\}_{n,m = 1}^M - \big\{\inn{\xi_n,\xi_m}\big\}_{n,m = 1}^N\big\|_\de \\
& \q \leq \sum_{n = N + 1}^M \sum_{m = 1}^M \| \inn{\xi_n,\xi_m} \|_\de + \sum_{n = 1}^N \sum_{m = N + 1}^M \| \inn{\xi_n,\xi_m} \|_\de \\
& \q \leq 2 \cd \sum_{m = 1}^\infty \frac{1}{m^2} \cd \sum_{n = N + 1}^M \frac{1}{n^2}
\end{split}
\]
where the last inequality follows by \eqref{eq:norestinn}. This shows that the sequence $\big\{ \{\inn{\xi_n,\xi_m}\}_{n,m = 1}^N \big\}_{N = 1}^\infty$ is Cauchy in $\wit{\sK(H_A)_\de}$.
\end{proof}

For each $n \in \nn$, we define the element
\[
\begin{split}
H_n & := (1/n + G)^{-1} - (1/(n - 1) + G)^{-1} \\
& = (1 + n \cd G)^{-1} \cd (1 + (n-1) \cd G)^{-1}
\end{split}
\]
in $\wit{\sK(H_A)_\de}$, where $H_1 := (1 + G)^{-1}$. Since the spectrum of $H_n$ is strictly positive, it has a well-defined square root in $\wit{\sK(H_A)_\de}$,
\[
\sqrt{H_n} = (1 + n \cd G)^{-1/2} \cd (1 + (n-1) \cd G)^{-1/2}
\]

\begin{lemma}\label{l:dersqr}
We have the expression
\[
\begin{split}
& \de\big( (1 + n G)^{-1/2} \big) \\
& \q = - \frac{n}{\pi} \cd \int_0^\infty \la^{-1/2} \cd \rho\big( (1 + \la + n G)^{-1} \big) \cd \de(G) \cd \rho\big( (1 + \la + n \cd G)^{-1}\big) \, d\la
\end{split}
\]
where the integral converges in the operator norm on $\sL(H_B)$.
\end{lemma}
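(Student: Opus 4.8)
The plan is to write $(1 + n\cd G)^{-1/2}$ by means of the classical integral representation of the inverse square root and then to differentiate under the integral sign.

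First, recall the elementary scalar identity $t^{-1/2} = \frac{1}{\pi} \int_0^\infty \la^{-1/2} (t + \la)^{-1} \, d\la$, valid for all $t > 0$ (substitute $\la = t s$ and evaluate $\int_0^\infty s^{-1/2}(1+s)^{-1} \, ds = \pi$). Since the element $1 + n\cd G \in \wit{\sK(H_A)_\de}$ has spectrum contained in $[1,\infty)$, the element $1 + \la + n\cd G$ is invertible in $\wit{\sK(H_A)_\de}$ for every $\la \geq 0$ by spectral invariance (as in the proof of the previous lemma). The holomorphic functional calculus then promotes the scalar identity to the Banach-algebra identity
\[
(1 + n\cd G)^{-1/2} = \frac{1}{\pi} \int_0^\infty \la^{-1/2} \cd (1 + \la + n\cd G)^{-1} \, d\la ,
\]
once the integrand is shown to be $\|\cd\|_\de$-integrable on $(0,\infty)$; concretely one feeds the right-hand side into the Cauchy integral defining $f(1 + n\cd G)$ for $f(z) = z^{-1/2}$ and interchanges the two integrals, using that $\frac{1}{2\pi i}\int_\Gamma (z + \la)^{-1}(z - a)^{-1} \, dz = (a + \la)^{-1}$ for a contour $\Gamma$ surrounding the spectrum of $a = 1 + n \cd G$ inside $\cc \setminus (-\infty, 0]$.

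For the integrability, estimate the two contributions to $\| \cd \|_\de$ separately. Since $1 + \la + n \cd G \geq (1 + \la) \cd 1$ as a positive operator on $H_A$, we have $\| (1 + \la + n\cd G)^{-1} \| \leq (1 + \la)^{-1}$. Applying $\de$ to the identity $(1 + \la + n\cd G)^{-1}(1 + \la + n\cd G) = 1$ and using the Leibniz rule together with $\de(1 + \la + n\cd G) = n \cd \de(G)$ gives
\[
\de\big( (1 + \la + n\cd G)^{-1} \big) = - n \cd \rho\big( (1 + \la + n\cd G)^{-1} \big) \cd \de(G) \cd \rho\big( (1 + \la + n\cd G)^{-1} \big) ,
\]
so that, since $\rho$ is contractive, $\| \de\big( (1 + \la + n\cd G)^{-1}\big) \| \leq n \cd \| \de(G) \| \cd (1 + \la)^{-2}$. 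Hence
\[
\la^{-1/2} \cd \big\| (1 + \la + n\cd G)^{-1} \big\|_\de \leq \frac{\la^{-1/2}}{1 + \la} + n \cd \| \de(G) \| \cd \frac{\la^{-1/2}}{(1 + \la)^2} ,
\]
which is integrable over $(0,\infty)$. This confirms convergence of the integral in $\wit{\sK(H_A)_\de}$ (hence the displayed formula for $(1 + n\cd G)^{-1/2}$), and the same bound $n \cd \|\de(G)\| \cd \la^{-1/2}(1 + \la)^{-2}$ shows that the integral appearing in the statement of the lemma converges in the operator norm on $\sL(H_B)$.

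Finally, $\de : \wit{\sK(H_A)_\de} \to \sL(H_B)$ is bounded, with $\| \de(a) \| \leq \| a \|_\de$, so it commutes with the norm-convergent (Bochner) integral. Applying $\de$ to the integral representation of $(1 + n\cd G)^{-1/2}$ and substituting the formula for $\de\big( (1 + \la + n\cd G)^{-1} \big)$ obtained above yields exactly
\[
\de\big( (1 + n\cd G)^{-1/2} \big) = - \frac{n}{\pi} \int_0^\infty \la^{-1/2} \cd \rho\big( (1 + \la + n\cd G)^{-1} \big) \cd \de(G) \cd \rho\big( (1 + \la + n\cd G)^{-1} \big) \, d\la .
\]
The step I expect to require the most care is not the differentiation itself but the preliminary one: justifying that the integral formula for the inverse square root holds in the non-$C^*$ Banach algebra $\wit{\sK(H_A)_\de}$ and that it converges there in the stronger norm $\| \cd \|_\de$. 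This is precisely where spectral invariance, the closedness of $\de$, and the quadratic decay $\| \de\big( (1 + \la + n\cd G)^{-1} \big) \| = O\big( (1 + \la)^{-2} \big)$ all enter.
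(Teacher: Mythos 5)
Your proposal is correct and follows essentially the same route as the paper's proof: the integral representation $(1 + n\cd G)^{-1/2} = \frac{1}{\pi}\int_0^\infty \la^{-1/2}(1 + \la + n\cd G)^{-1}\,d\la$, absolute convergence in the norm $\| \cd \|_\de$, and the Leibniz-rule computation of $\de\big((1+\la+n\cd G)^{-1}\big)$. The paper states these steps tersely ("converges absolutely", "a standard computation"), and your write-up simply supplies the details it omits, all of which check out.
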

\begin{proof}
The element $(1 + n G)^{-1/2} \in \wit{\sK(H_A)_\de}$ can be rewritten as the integral
\[
\frac{1}{\pi} \cd \int_0^\infty \la^{-1/2} \cd (1 + \la + n \cd G)^{-1} \, d\la
\]
which converges absolutely in the norm $\| \cd \|_\de : \wit{\sK(H_A)_\de} \to [0,\infty)$. It is therefore enough to check that
\[
\de\big( (1 + \la + n \cd G)^{-1} \big) = - \rho \big( (1 + \la + n G)^{-1} \big) \cd  n \cd \de(G) \cd \rho\big( (1 + \la + n \cd G)^{-1} \big)
\]
But this follows from a standard computation, using that $\de : \wit{\sK(H_A)_\de} \to \sL(H_B)$ is a derivation with respect to $\rho : \wit{\sK(H_A)} \to \sL(H_B)$.
\end{proof}

The estimate in the following lemma is of central importance for the differentiable absorption theorem.

\begin{lemma}\label{l:delest}
Let $\ep \in (0,1/2)$. There exists a constant $C_\ep > 0$ such that
\[
\big\| \de(\sqrt{H_n} \cd G^2) \big\| \leq C_\ep \cd \frac{1}{n^{1-\ep}}
\]
for all $n \in \nn$.
\end{lemma}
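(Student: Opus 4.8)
The plan is to estimate $\de(\sqrt{H_n}\cdot G^2)$ by the Leibniz rule, splitting it as $\de(\sqrt{H_n})\cdot \rho(G^2) + \rho(\sqrt{H_n})\cdot \de(G^2)$, and then to control each summand by a careful use of the integral formula of Lemma~\ref{l:dersqr} together with functional calculus estimates for the resolvents of $G$. The second summand is the easy one: since $\de(G^2) = \de(G)\rho(G) + \rho(G)\de(G)$ is a fixed bounded operator and $\|\sqrt{H_n}\| = \|(1+nG)^{-1/2}(1+(n-1)G)^{-1/2}\| \leq \|(1+nG)^{-1/2}\|\cdot 1 \leq 1$, this term is bounded uniformly in $n$, which is certainly $\leq C_\ep\, n^{-(1-\ep)}$ after adjusting the constant; in fact one can do better by inserting a factor of $G$ into $(1+nG)^{-1/2}G$ to gain a power of $n$, but uniform boundedness already suffices. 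The real content is the first summand $\de(\sqrt{H_n})\cdot \rho(G^2)$, or rather the combination $\de\big((1+nG)^{-1/2}\big)\cdot \rho\big((1+(n-1)G)^{-1/2}\big)\cdot\rho(G^2)$ plus the symmetric term with $\de$ hitting the other factor; both are handled the same way, so I will describe the first.

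For this term I would substitute the integral representation from Lemma~\ref{l:dersqr}, obtaining (up to the analogous second term)
\[
\de(\sqrt{H_n})\,\rho(G^2) = -\frac{n}{\pi}\int_0^\infty \la^{-1/2}\,\rho\big((1+\la+nG)^{-1}\big)\,\de(G)\,\rho\big((1+\la+nG)^{-1}(1+(n-1)G)^{-1/2}G^2\big)\,d\la,
\]
and then estimate the operator norm under the integral sign. The key functional-calculus fact is that for $t\geq 0$ one has $t^2(1+\la+nt)^{-1}(1+(n-1)t)^{-1/2} \leq (1+\la+nt)^{-1}\cdot \frac{t^2}{(1+(n-1)t)^{1/2}}$, and $\sup_{t\geq 0}\frac{t^{3/2}}{1+nt} \leq C\, n^{-3/2}$, together with the elementary bounds $\|(1+\la+nt)^{-1}\| \leq (1+\la)^{-1}$ and $\|(1+\la+nt)^{-1}t^{s}\| \leq C_s\,(1+\la)^{-1}(n)^{-s}$ for $0\leq s\leq 1$. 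Distributing the available powers of $G$ so as to produce a factor like $(1+\la)^{-1-\delta}$ for some $\delta>0$ (needed for integrability of $\la^{-1/2}(1+\la)^{-1-\delta}$ against $d\la$) while retaining a total negative power of $n$ of the form $n^{-3/2+\ep'}$, and keeping in mind the overall prefactor $n$, yields a bound of the shape $n\cdot n^{-(3/2-\ep')} = n^{-(1/2-\ep')}$, which is even stronger than required; the parameter $\ep$ enters because near $\la = 0$ one must trade a tiny bit of the decay in $n$ for integrability, exactly as the hypothesis $\ep\in(0,1/2)$ suggests.

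The main obstacle is bookkeeping: one has several resolvent factors $(1+\la+nG)^{-1}$, $(1+(n-1)G)^{-1/2}$, and powers of $G$, and one must decide how to apportion the positive element $G$ among them so that simultaneously (i) the $\la$-integral converges — this forces spending part of $G$ on making the $(1+\la+\cdots)^{-1}$ factors decay faster than $(1+\la)^{-1}$ — and (ii) one extracts the largest possible negative power of $n$. Getting the exponent right requires the inequality $\sup_{t\geq 0} \frac{t^{a}}{(1+nt)^{b}} = C_{a,b}\, n^{-a}$ for $0\leq a\leq b$, applied repeatedly. Once the distribution of factors is fixed and these scalar $\sup$-estimates are in place, the operator-norm bound under the integral follows by submultiplicativity of $\|\cdot\|$ on $\sL(H_B)$ and positivity of the functional calculus, and integrating in $\la$ gives the claimed $C_\ep\, n^{-(1-\ep)}$ (indeed with room to spare). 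I would also remark that the appearance of $G^2$ rather than $G$ in the statement is precisely what provides the extra power of $G$ needed to beat the prefactor $n$ coming from the derivative of the square root; with only one factor of $G$ the estimate would fail.
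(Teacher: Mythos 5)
There is a genuine gap, and it sits exactly where the real work happens. Your top-level Leibniz splitting $\de(\sqrt{H_n}\cdot G^2) = \de(\sqrt{H_n})\cdot\rho(G^2) + \rho(\sqrt{H_n})\cdot\de(G^2)$ is already the wrong decomposition. First, the ``easy'' summand is not fine: $\rho(\sqrt{H_n})\de(G^2)$ contains the term $\rho(\sqrt{H_n})\,\de(G)\,\rho(G)$, in which $\sqrt{H_n}$ is adjacent only to $\de(G)$, which does not commute with functions of $G$; this term is merely $O(1)$ in general, and a uniform bound is \emph{not} of the form $C_\ep\cdot n^{-(1-\ep)}$ --- the lemma demands genuine decay, so ``uniform boundedness already suffices'' is false, and your proposed fix (absorbing a $G$ into $(1+nG)^{-1/2}$) is unavailable here because the $G$ sits on the wrong side of $\de(G)$. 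Second, in the main summand your arrangement places \emph{both} factors of $G$ to the right of $\de(G)$ inside the integral, leaving the left resolvent $(1+\la+nG)^{-1}$ with nothing to absorb; it can then only supply $(1+\la)^{-1}$ for integrability and no decay in $n$. The right block $(1+\la+nG)^{-1}(1+(n-1)G)^{-1/2}G^2$ yields at most $n^{-1}\cdot n^{-1/2}$ (note that $\sup_{t\in\sigma(G)}t^{3/2}(1+nt)^{-1}$ is only $O(n^{-1})$, not $O(n^{-3/2})$; your claimed estimate contradicts your own correct rule requiring $a\leq b$), so after multiplying by the prefactor $n$ you obtain $O(n^{-1/2})$. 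And $n^{-1/2}$ is \emph{weaker} than $n^{-(1-\ep)}$ for $\ep\in(0,1/2)$, not ``stronger than required'': the direction of that comparison is reversed.

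The missing idea is to symmetrize \emph{before} differentiating: since all factors commute in $\wit{\sK(H_A)_\de}$, write $\sqrt{H_n}\cdot G^2 = G\cdot(1+nG)^{-1/2}(1+(n-1)G)^{-1/2}\cdot G$ and apply the Leibniz rule to this product of four factors. Every resulting term then has a factor of $G$ adjacent to each resolvent: the terms $\de(G)\sqrt{H_n}G + G\sqrt{H_n}\de(G)$ are $O(n^{-1})$ because $\|G^{1/2}(1+nG)^{-1/2}\|\leq n^{-1/2}$; and in the term $G\cdot\de\big((1+nG)^{-1/2}\big)\cdot(1+(n-1)G)^{-1/2}\cdot G$ the \emph{left} $G$ pairs with the left resolvent to give $\|(nG)(1+\la+nG)^{-1}\|\leq 1$, absorbing the prefactor $n$ entirely, while the right $G$ is split as $G^{1/2-\ep}\cdot G^{\ep}\cdot G^{1/2}$ so as to produce $n^{-(1/2-\ep)}$ from $(1+\la+nG)^{-1/2+\ep}$, the integrable weight $(1+\la)^{-1/2-\ep}$ from the remaining piece of that resolvent, and $n^{-1/2}$ from $(1+(n-1)G)^{-1/2}$. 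This is where $\ep$ genuinely enters, and it gives exactly $O(n^{-(1-\ep)})$ --- with no room to spare, contrary to your expectation of a stronger bound.
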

\begin{proof}
Let $n \geq 2$. Using that $\de : \wit{\sK(H_A)_\de} \to \sL(H_B)$ is a derivation we obtain that
\begin{equation}\label{eq:delsum}
\begin{split}
\de(\sqrt{H_n} \cd G^2) & = \de(G) \cd \sqrt{H_n} \cd G + G \cd \sqrt{H_n} \cd \de(G) \\ 
& \qq + G \cd \de\big( (1 + n G)^{-1/2} \big) \cd ( 1 + (n-1) G )^{-1/2} \cd G \\
& \qq + G \cd (1 + n G)^{-1/2} \cd \de\big( ( 1 + (n-1) G )^{-1/2}\big) \cd G
\end{split}
\end{equation}
where we have suppressed the unital $*$-homomorphism $\rho : \wit{\sK(H_A)} \to \sL(H_B)$.

Now, since $G \in \sK(H_A)_\de$ determines a positive element in the unital $C^*$-algebra $\wit{\sK(H_A)}$, we have that
\[
\| G \cd (1 + \la + n G)^{-1} \| \leq \frac{1}{n}
\]
for all $\la \geq 0$.

Using the above estimate we obtain the following inequalities
\[
\begin{split}
& \big\| \de(G) \cd \sqrt{H_n} \cd G + G \cd \sqrt{H_n} \cd \de(G) \big\| \\
& \q \leq 2 \cd \| \de(G) \| \cd \big\| (1 + (n-1) G)^{-1/2} G^{1/2} \big\| \cd \big\| (1 + n G)^{-1/2} G^{1/2} \big\| \\
& \q \leq 2 \cd \| \de(G) \| \cd \frac{1}{\sqrt{n} \cd \sqrt{n-1}}
\end{split}
\]

To continue, we apply Lemma \ref{l:dersqr} to compute as follows,
\[
\begin{split}
& G \cd \de\big( (1 + n G)^{-1/2} \big) \cd ( 1 + (n-1) G )^{-1/2} \cd G \\
& \q = - \frac{1}{\pi} \cd \int_0^\infty \la^{-1/2} \cd (n G) \cd (1 + \la + n G)^{-1} \cd \de(G) \\ 
& \qqqq \cd G^{1/2 - \ep} \cd (1 + \la + n G)^{-1} \, d\la \\ 
& \qq \qqq \cd G^{1/2 + \ep} \cd (1 + (n-1) G)^{-1/2}
\end{split}
\]
As a consequence, we obtain that
\[
\begin{split}
& \big\| G \cd \de\big( (1 + n G)^{-1/2} \big) \cd ( 1 + (n-1) G )^{-1/2} \cd G \big\| \\
& \q \leq \frac{1}{\pi} \cd \int_0^\infty \la^{-1/2} \cd \| \de(G) \| \cd (1 + \la)^{-1/2 - \ep} \cd \| G^{1/2 - \ep} \cd (1 + \la + n G)^{-1/2 + \ep} \| \, d\la \\ 
& \qqq \cd \| G^\ep \| \cd \frac{1}{\sqrt{n-1}} \\
& \q \leq \| \de(G) \| \cd \| G^\ep \| \cd \frac{1}{(n-1)^{1/2} \cd n^{1/2 - \ep}\cd \pi} \cd \int_0^\infty \la^{-1/2} (1 + \la)^{-1/2 - \ep} \, d\la 
\end{split}
\]

A similar computation shows that
\[
\begin{split}
& \big\| G \cd (1 + n G)^{-1/2} \cd \de \big( ( 1 + (n-1) G )^{-1/2} \big) \cd G \big\| \\
& \q \leq \| \de(G) \| \cd \| G^\ep \| \cd \frac{1}{(n-1)^{1/2-\ep} \cd n^{1/2}\cd \pi} \cd \int_0^\infty \la^{-1/2} (1 + \la)^{-1/2 - \ep} \, d\la
\end{split}
\]

A combination of all the above estimates and the identity in \eqref{eq:delsum} proves the claim of the proposition.
\end{proof}

Recall from Section \ref{s:conabs} that the compact operators $\Phi^* : H_A \to X$ and $\Phi : X \to H_A$ are defined by $\Phi^* : \{a_k\}_{k = 1}^\infty \mapsto \sum_{k = 1}^\infty \xi_k \cd a_k$ and $\Phi : \eta \mapsto \{ \inn{\xi_k,\eta}\}_{k = 1}^\infty$.

Furthermore, for each $n \in \nn$, we have the compact operators $\Psi_n := \sqrt{H_n} \Phi : X \to H_A$ and $\Psi_n^* := \Phi^* \sqrt{H_n} : H_A \to X$.

Finally, for each $N \in \nn$ we have the compact operators $V_N : X \to H_A^\infty$ and $V_N^* : H_A^\infty \to X$ defined by $V_N : \eta \mapsto \{ \Psi_n(\eta) \}_{n = 1}^N$ and $V_N^* : \{ x_n\}_{n = 1}^\infty \mapsto \sum_{n = 1}^N \Psi^*_n(x_n)$. It was proved in Section \ref{s:conabs} that the sequence $\{V_N\}_{N = 1}^\infty$ converges strongly to a bounded adjointable isometry $\Psi : X \to H_A^\infty$. The adjoint of $\Psi$ is given by $\Psi^* : \sum_{n = 1}^\infty e_n \cd x_n \mapsto \sum_{n = 1}^\infty \Psi_n^*(x_n)$.

For each $N \in \nn$ we define the compact operator
\[
\de\big( \T{diag}(G) V_N \Phi^*) \in \sK(H_B,H_B^\infty) \q \de\big( \T{diag}(G) V_N \Phi^* \big) : x \mapsto \sum_{n = 1}^N e_n \cd \de(G^2 \sqrt{H_n})(x)
\]
where $\T{diag}(G) : H_A^\infty \to H_A^\infty$ refers to the (non-compact) diagonal operator $\T{diag}(G) : \sum_{n = 1}^\infty e_n x_n \mapsto \sum_{n = 1}^\infty e_n G(x_n)$ induced by the (compact operator) $G : H_A \to H_A$.

We note the following consequence of the above Lemma \ref{l:delest}:

\begin{lemma}\label{l:caudercom}
The sequence of compact operators $\{ \de\big( \T{diag}(G) V_N \Phi^* \big) \}_{N = 1}^\infty$ is a Cauchy sequence in $\sK(H_B,H_B^\infty)$. 
\end{lemma}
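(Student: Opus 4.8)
The plan is to deduce the statement directly from the norm estimate of Lemma \ref{l:delest}. First I would record that $G$ and $H_n$ are both continuous functions of the positive element $G \in \wit{\sK(H_A)_\de}$ and hence commute, so that $G^2 \sqrt{H_n} = \sqrt{H_n} \cd G^2$; fixing once and for all an exponent $\ep \in (0,1/2)$, Lemma \ref{l:delest} then provides a constant $C_\ep > 0$ with $\| \de(G^2 \sqrt{H_n}) \| \leq C_\ep \cd n^{\ep - 1}$ for all $n \in \nn$.

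Next, for $M \geq N$ the difference $\de\big( \T{diag}(G) V_M \Phi^* \big) - \de\big( \T{diag}(G) V_N \Phi^* \big)$ is, by the very definition of these operators, the column operator $H_B \to H_B^\infty$ sending $x$ to $\sum_{n = N + 1}^M e_n \cd \de(G^2 \sqrt{H_n})(x)$, and I would estimate its operator norm by the standard bound for columns of adjointable operators. Writing $T_n := \de(G^2\sqrt{H_n}) \in \sK(H_B)$, one has $\inn{\sum_n e_n T_n x, \sum_n e_n T_n x} = \sum_{n = N+1}^M \inn{x, T_n^* T_n x} \leq \big( \sum_{n = N+1}^M \| T_n \|^2\big) \cd \inn{x,x}$ in the order of $B$, whence
\[
\big\| \de\big( \T{diag}(G) V_M \Phi^* \big) - \de\big( \T{diag}(G) V_N \Phi^* \big) \big\| \leq \Big( \sum_{n = N + 1}^M \| \de(G^2\sqrt{H_n}) \|^2 \Big)^{1/2} \leq C_\ep \cd \Big( \sum_{n = N + 1}^M \frac{1}{n^{2 - 2\ep}} \Big)^{1/2}.
\]

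Finally, since $\ep < 1/2$ we have $2 - 2\ep > 1$, so the series $\sum_n n^{2\ep - 2}$ converges; its tails therefore tend to $0$, and the right-hand side above goes to $0$ as $N \to \infty$, uniformly in $M \geq N$. This is precisely the Cauchy condition in $\sK(H_B,H_B^\infty)$. I do not anticipate a genuine obstacle here: all the real analysis was already carried out in Lemma \ref{l:delest}, and the content of the present lemma is exactly the observation that the polynomial decay rate $n^{\ep - 1}$ produced there is square-summable — which holds precisely because $\ep$ was taken strictly below $1/2$. The one point requiring a line of care is the column-norm inequality, which rests on the fact that $\inn{x,Sx} \leq \| S\| \cd \inn{x,x}$ for a positive adjointable operator $S$, together with the triangle inequality $\big\| \sum_n T_n^* T_n \big\| \leq \sum_n \| T_n \|^2$.
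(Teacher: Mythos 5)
Your proposal is correct and follows essentially the same route as the paper: both apply Lemma \ref{l:delest} to get the decay $\| \de(G^2\sqrt{H_n})\| \leq C_\ep \cd n^{\ep - 1}$ (the paper implicitly fixes $\ep = 1/4$, yielding the exponent $3/2$), and both conclude via the column-norm estimate $\| \sum_n e_n T_n x \|^2 \leq \big( \sum_n \|T_n\|^2 \big) \cd \|x\|^2$ together with the summability of $\sum_n n^{2\ep - 2}$ for $\ep < 1/2$. Your explicit remark that $G^2 \sqrt{H_n} = \sqrt{H_n} \cd G^2$ (so that Lemma \ref{l:delest} really applies to the entries appearing in the definition of $\de\big(\T{diag}(G) V_N \Phi^*\big)$) is a small point the paper leaves tacit, and is correct.
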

\begin{proof}
By Lemma \ref{l:delest} we may choose a constant $C > 0$ such that
\[
\begin{split}
& \| \de\big( \T{diag}(G) V_N \Phi^* \big)(x) - \de\big( \T{diag}(G) V_M \Phi^* \big)(x) \|^2 = \| \sum_{n = N+1}^M e_n \de(G^2 \sqrt{H_n} )(x) \|^2 \\
& \q = \big\| \sum_{n = N+1}^M \binn{ \de(G^2 \sqrt{H_n})x, \de(G^2 \sqrt{H_n})x} \big\|
\leq C \sum_{n = N + 1}^M \frac{1}{n^{3/2}} \| x\|^2 
\end{split}
\]
for all $N,M \in \nn$ with $M \geq N$ and all $x \in H_B$. This proves the lemma. 
\end{proof}

The next lemma is a consequence of Lemma \ref{l:limexi}.

\begin{lemma}\label{l:seqcon}
The sequence of compact operators $\{ V_N \Phi^* \}_{N = 1}^\infty$ converges in operator norm to $\Psi \Phi^* : H_A \to H_A^\infty$.
\end{lemma}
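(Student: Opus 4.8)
The plan is to show that $\{ V_N \Phi^* \}_{N = 1}^\infty$ is a Cauchy sequence in operator norm; once this is in place the limit must be $\Psi \Phi^*$, since operator-norm convergence implies strong convergence and we already know from Section~\ref{s:conabs} that $V_N \to \Psi$ strongly, so that $(V_N \Phi^*)(x) = V_N(\Phi^* x) \to \Psi(\Phi^* x)$ for every $x \in H_A$; uniqueness of strong limits then identifies the operator-norm limit with $\Psi \Phi^*$.

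For the Cauchy estimate, fix $M \geq N$ in $\nn$ and $x \in H_A$. Expanding the norm in $H_A^\infty$ and using that $\Psi_n = \sqrt{H_n}\,\Phi$, so that $\Psi_n^* \Psi_n = \Phi^* H_n \Phi$, we obtain
\[
\big\| (V_M - V_N)\Phi^*(x) \big\|^2 = \Big\| \binn{ x , \, \Phi \Phi^* \big( {\textstyle \sum_{n = N + 1}^M} H_n \big) \Phi \Phi^*(x) } \Big\| .
\]
This is precisely the quantity that appears, with $\eta$ replaced by $\Phi^* x$, in the proof of Lemma~\ref{l:limexi}; this is the sense in which the present statement is a consequence of that lemma. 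Using $\Phi \Phi^* = G$ and the telescoping identity $\sum_{n = N + 1}^M H_n = (1/M + G)^{-1} - (1/N + G)^{-1}$, the right-hand side equals $\big\| \binn{ x , \, G\big( (1/M + G)^{-1} - (1/N + G)^{-1}\big) G\, x } \big\|$.

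The one inequality that genuinely has to be supplied --- and the point at which we gain over the bare convergence statement of Lemma~\ref{l:limexi} --- is the norm estimate
\[
\big\| G\big( (1/M + G)^{-1} - (1/N + G)^{-1}\big) G \big\| \leq \frac{1}{M} + \frac{1}{N} \leq \frac{2}{N} .
\]
This follows from the identity $G(1/k + G)^{-1} G = G - (1/k)\,(1/k + G)^{-1} G$ together with the bound $\| (1/k)\,(1/k + G)^{-1} G \| \leq 1/k$, which holds because $G$ is a positive operator and the function $t \mapsto t/(1 + kt)$ is bounded by $1/k$ on $[0,\infty)$. Combining, $\| (V_M - V_N)\Phi^*(x) \|^2 \leq (2/N)\| x \|^2$, hence $\| (V_M - V_N)\Phi^* \| \leq \sqrt{2/N}$, so $\{ V_N \Phi^* \}_{N = 1}^\infty$ is Cauchy in operator norm.

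I do not expect a real obstacle: the argument is short and is essentially a rerun of the proof of Lemma~\ref{l:limexi}. The only point worth highlighting is that pre-composing the (merely strongly convergent) operators $V_N$ with $\Phi^*$ produces an extra factor $G = \Phi \Phi^*$ on each side of the relevant sum, and it is exactly this factor that upgrades the estimate from ``convergent'' to ``summable, with an explicit rate in $N$'', which is what turns strong convergence into operator-norm convergence.
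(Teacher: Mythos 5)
Your proof is correct, but it takes a genuinely different route from the paper's. The paper disposes of this lemma in one line: $\Phi^* : H_A \to X$ is compact, the sequence $\{V_N\}$ is uniformly bounded (by $1$, since $V_N^* V_N = \Phi^* G_N \Phi$) and converges strongly to $\Psi$, and a uniformly bounded strongly convergent sequence composed on the right with a compact operator converges in operator norm (approximate the compact operator by finite-rank operators $\te_{\xi,\eta}$, for which the claim is immediate). You instead prove Cauchy-ness directly: pre-composing with $\Phi^*$ inserts a factor $G = \Phi\Phi^*$ on each side of the telescoping sum $\sum_{n=N+1}^{M} H_n = G_M - G_N$, and the identity $G(1/k+G)^{-1}G = G - (1/k)(1/k+G)^{-1}G$ together with the functional-calculus bound $\sup_{t \geq 0} t/(1+kt) = 1/k$ gives $\| G(G_M - G_N)G \| \leq 1/N + 1/M$, hence $\| (V_M - V_N)\Phi^* \| \leq \sqrt{2/N}$; the limit is then identified with $\Psi\Phi^*$ by comparison with the strong limit, exactly as you say. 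All steps check out. What your argument buys is an explicit quantitative rate of convergence and independence from the soft compactness fact; what the paper's argument buys is brevity and the observation that only compactness of $\Phi^*$, not its specific form, is needed. Either proof is acceptable here.
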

\begin{proof}
This follows since $\Phi : X \to H_A$ (and hence $\Phi^* : H_A \to X$) is compact and since the bounded sequence $\{ V_N\}_{N = 1}^\infty$ converges strongly to $\Psi : X \to H_A^\infty$.
\end{proof}

\begin{prop}\label{p:seqpro}
The sequence $\{ \T{diag}(G) V_N V_N^* \}_{N = 1}^\infty$ in $\sK(H_A^\infty)$ converges in operator norm to $\T{diag}(G) \Psi \Psi^* : H_A^\infty \to H_A^\infty$.
\end{prop}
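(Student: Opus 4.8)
\textbf{Proof plan for Proposition \ref{p:seqpro}.}

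The plan is to establish the operator-norm convergence $\T{diag}(G) V_N V_N^* \to \T{diag}(G) \Psi \Psi^*$ by decomposing $V_N V_N^*$ and $\Psi \Psi^*$ through the auxiliary operator $\Phi^*$. The key observation is that $\Phi^* : H_A \to X$ is compact, and that the identity $\Psi^* \Psi = 1_X$ (Proposition \ref{p:riginv}, and its finite truncation analogue) together with the strong convergence $V_N \to \Psi$ lets us rewrite $V_N V_N^*$ in a form amenable to norm estimates. Concretely, I would first note that $V_N V_N^* = V_N \Psi^* \Psi V_N^* + V_N (1 - \Psi^* \Psi) V_N^*$; since $\Psi^* \Psi = 1_X$ the second term vanishes, so $V_N V_N^* = (V_N \Psi^*)(\Psi V_N^*) = (V_N \Phi^*)(\Phi \Psi^*)^*$ — wait, more carefully, I want a factorization where one factor is literally $V_N \Phi^*$ so that Lemma \ref{l:seqcon} applies. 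The cleanest route: observe that on the dense submodule one has $V_N^* = \Phi^* \cdot (\text{something involving the } \sqrt{H_n})$, but the tidiest is simply to use that $V_N \Phi^*$ and $\Phi V_N^*$ converge in norm (Lemma \ref{l:seqcon} and its adjoint), and that $\T{diag}(G) V_N = \T{diag}(G)\, V_N$ can be absorbed since $V_N \eta = \{\sqrt{H_n}\Phi(\eta)\}$ so $\T{diag}(G) V_N = \{ G \sqrt{H_n} \Phi \} $, whence $\T{diag}(G) V_N \Phi^* = \{ G\sqrt{H_n}\Phi\Phi^*\} = \{ G\sqrt{H_n} G\} = \{G^2\sqrt{H_n}\}$, which is exactly the operator $\T{diag}(G) V_N \Phi^*$ named before Lemma \ref{l:caudercom}.

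The main step is therefore to write
\[
\T{diag}(G) V_N V_N^* = \big( \T{diag}(G) V_N \Phi^* \big) \cdot R_N
\]
for a suitable uniformly bounded operator $R_N : H_A^\infty \to H_A^\infty$, or — symmetrically — to split the difference $\T{diag}(G) V_N V_N^* - \T{diag}(G)\Psi\Psi^*$ into a piece controlled by $\|V_N\Phi^* - \Psi\Phi^*\|$ (which tends to $0$ by Lemma \ref{l:seqcon}) and a piece controlled by $\|\Phi V_N^* - \Phi\Psi^*\| = \|(V_N\Phi^* - \Psi\Phi^*)^*\|$ together with the uniform bound $\|V_N\| \le 1$, $\|\Psi\| = 1$ and $\|G\| < \infty$. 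Adding and subtracting the mixed term $\T{diag}(G)\,(\Psi\Phi^*)(\Phi V_N^*)$ — using $\Phi^*\Phi \ne 1$ in general, so one must instead route through $\Psi^*\Psi = 1_X$: write $V_N V_N^* = V_N \Psi^* \Psi V_N^*$ and $\Psi\Psi^* = \Psi\Psi^*\Psi\Psi^*$, both using $\Psi^*\Psi = 1_X$, and then interpolate between $V_N\Psi^*$ and $\Psi\Psi^*$ and between $\Psi V_N^*$ and $\Psi\Psi^*$. Here $V_N \Psi^* = V_N \Phi^* \cdot (\text{bounded})$ is not immediate, so the genuinely clean formulation is: set $A_N := \T{diag}(G) V_N \Phi^*$, $A_\infty := \T{diag}(G)\Psi\Phi^*$, and observe $\T{diag}(G) V_N V_N^* = A_N \cdot S_N$ where $S_N := (\Phi\Phi^*)^{-?}$ — this inverse does not exist, so the correct bookkeeping is $V_N V_N^* = V_N \Phi^* T \Phi V_N^*$ is also not available.

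Given these obstructions, the approach I would actually carry out is the triangle-inequality one: estimate
\[
\| \T{diag}(G) V_N V_N^* - \T{diag}(G) \Psi \Psi^* \| \le \| \T{diag}(G)(V_N - \Psi) V_N^* \| + \| \T{diag}(G)\Psi(V_N^* - \Psi^*) \|,
\]
and handle each term by factoring out $\Phi^*$: since $V_N - \Psi$ and $V_N^* - \Psi^*$ both involve only the tail $\sum_{n > N}$ of the $\Psi_n = \sqrt{H_n}\Phi$ data, and since $\Phi$ is compact while $\T{diag}(G)$ is a bounded diagonal operator, the product $\T{diag}(G)(V_N - \Psi)$ equals $(V_{N,\infty}^{\text{tail}})$ composed with $\Phi$ and hence is genuinely small in norm — precisely, $\|\T{diag}(G)(V_N-\Psi)\|^2 = \|(V_N-\Psi)^*\T{diag}(G)^2(V_N-\Psi)\| = \|\Phi^*(\sum_{n>N} H_n^{1/2} G^2 H_n^{1/2})\Phi\|$, and $\sum_{n>N} H_n = (1/N + G)^{-1} - (\text{limit})$ telescopes so that this is controlled by $\|G^2 \cdot ((1/N+G)^{-1} - \lim)\| \to 0$ using functional calculus and $\|G\| < \infty$. \textbf{The main obstacle} will be making this telescoping-plus-functional-calculus estimate fully rigorous in $\wit{\sK(H_A)}$ (one must check $G^2 \cdot ((1/N+G)^{-1} - (1+G)^{-1}\text{-type limit})$ tends to $0$ in norm, which follows from continuity of functional calculus since $g \mapsto g^2(1/N + g)^{-1}$ converges uniformly on the compact spectrum of $G$), together with carefully tracking that the remaining factor $V_N^*$ resp. $\Psi$ stays uniformly bounded by $1$.
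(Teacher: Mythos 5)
Your handling of the first term of the triangle inequality is sound: since every $\sqrt{H_n}$ commutes with $G$, one gets
\[
\big\| \T{diag}(G)(V_N - \Psi) \big\|^2 = \sup_{M > N} \big\| \Phi^* G^2 (G_M - G_N) \Phi \big\|
\leq \|\Phi\|^2 \cd \sup_{M > N} \big\| \tfrac{1}{N} G (G + 1/N)^{-1} - \tfrac{1}{M} G (G + 1/M)^{-1} \big\| \leq \frac{2 \|\Phi\|^2}{N}
\]
using $G^2 (G + 1/N)^{-1} = G - \tfrac{1}{N} G (G + 1/N)^{-1}$, and together with $\|V_N^*\| \leq 1$ this kills $\T{diag}(G)(V_N - \Psi)V_N^*$. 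The genuine gap is in the second term, $\T{diag}(G)\Psi(V_N^* - \Psi^*)$. There the regularizing factor $\T{diag}(G)$ sits next to $\Psi$ while the tail difference $V_N^* - \Psi^*$ sits on the far right with nothing to control it: your estimate bounds $\T{diag}(G)(V_N - \Psi)$ and, by taking adjoints, $(V_N^* - \Psi^*)\T{diag}(G)$, but neither configuration occurs in this term. The fallback you indicate --- uniform boundedness of the "remaining factor $\Psi$'' --- would require $\|V_N^* - \Psi^*\| = \|V_N - \Psi\| \to 0$, and this is false in general: $\|V_N - \Psi\|^2 = \sup_{M>N}\| \tfrac{1}{N}(G + 1/N)^{-1} - \tfrac{1}{M}(G+1/M)^{-1}\|$, which stays close to $1$ whenever $0$ is an accumulation point of the spectrum of the compact operator $G$ (evaluate the corresponding function at spectral values $t$ with $1/M \ll t \ll 1/N$). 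So $V_N \to \Psi$ only strongly, and the single available power of $G$ must be spent on the tail factor, not on $\Psi$. The "main obstacle'' you flag (rigour of the functional calculus) is harmless; the misplaced regularizer is the one that bites.

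The gap is repairable, and the repair is essentially the identity you derived and then discarded in your middle paragraph. Every entry of $V_N V_N^*$, of $\Psi V_N^*$ and of $\Psi\Psi^*$ has the form $\sqrt{H_n} \, G \, \sqrt{H_m}$ and hence commutes with $G$, so $\T{diag}(G)$ commutes with all three operators; consequently $\T{diag}(G)\Psi(V_N^* - \Psi^*) = \big( \T{diag}(G)(V_N - \Psi)\Psi^* \big)^*$ and your first-term estimate covers it after all. The paper's own argument is shorter and avoids the tail estimate altogether: from $G = \Phi\Phi^*$ one has the entrywise identity $G\sqrt{H_n} \, G \sqrt{H_m} = \sqrt{H_n}\Phi \, (\Phi^*\Phi) \, \Phi^* \sqrt{H_m}$, i.e.\ $\T{diag}(G) V_N V_N^* = (V_N\Phi^*) \, \Phi V_N^* = (V_N\Phi^*)(V_N\Phi^*)^*$, whence the norm convergence $V_N \Phi^* \to \Psi\Phi^*$ of Lemma \ref{l:seqcon} (which is where the compactness of $\Phi$ is actually exploited) finishes the proof in one line.
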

\begin{proof}
Let $N \in \nn$ and remark that 
\[
\{ \T{diag}(G) V_N V_N^*\}_{n,m} = G^2 \sqrt{H_m} \cd \sqrt{H_n} = \sqrt{H_m} \Phi \Phi^* \Phi \Phi^* \sqrt{H_n}
\]
for all $n,m \in \{1,\ldots,N\}$. It follows that $\T{diag}(G) V_N V_N^* = V_N \Phi^* \Phi V_N^*$. The result of the proposition is now a consequence of Lemma \ref{l:seqcon}.
%
\end{proof}

In order to formulate our next result we reiterate the construction of the Banach $*$-algebra $\sK(H_A)_\de$. Indeed, we may consider the finite matrices $M_\infty\big( \sK(H_A)_\de \big)$ as a dense $*$-subalgebra of the compact operators $\sK(H_A^\infty)$ on the standard module $H_A^\infty$. The $*$-homomorphism $\rho : \sK(H_A) \to \sK(H_B)$ can then be extended uniquely to a $*$-homomorphism $\rho : \sK(H_A^\infty) \to \sK(H_B^\infty)$ such that $\rho \{ x_{ij}\} = \{ \rho(x_{ij})\}$ for all $\{ x_{ij}\} \in M_\infty( \sK(H_A))$. Likewise, we may extend $\de$ uniquely to a closed derivation $\de : M_\infty\big( \sK(H_A)_\de \big) \to \sK(H_B^\infty)$ such that $\de\{ x_{ij} \} := \{ \de(x_{ij}) \}$. We denote the Banach $*$-algebra defined as the completion of $M_\infty\big( \sK(H_A)_\de \big)$ with respect to the norm $\| \cd \|_\de : x \mapsto \| x\| + \| \de(x) \|$ by $\sK(H_A^\infty)_\de$.

We note that we have an isometric isomorphism of Banach $*$-algebras $\sK(H_A^\infty)_\de \to \sK(H_A)_\de$ defined by conjugasion with the unitary operator $U_\al : H_A \to H_A^\infty$ introduced in \eqref{eq:intuni}.

\begin{prop}\label{p:comdelcau}
The sequence $\{ \T{diag}(G)^2 V_N V_N^* \}_{N = 1}^\infty$ in $M_\infty(\sK(H_A)_\de)$ is Cauchy in $\sK(H_A^\infty)_\de$. 
\end{prop}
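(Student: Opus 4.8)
The plan is to control the sequence $\{\T{diag}(G)^2 V_N V_N^*\}_{N=1}^\infty$ in the $\|\cd\|_\de$-norm by splitting the increment $\T{diag}(G)^2 V_M V_M^* - \T{diag}(G)^2 V_N V_N^*$ into its operator-norm part and its derivative part, and then bounding each separately using the estimates already established. For the operator-norm part, I would observe that $\T{diag}(G)^2 V_N V_N^*$ equals $V_N \Phi^*\Phi\Phi^*\Phi V_N^*$ (by the same matrix computation as in the proof of Proposition \ref{p:seqpro}, now with an extra factor of $G = \Phi\Phi^*$), so that operator-norm convergence of $\{\T{diag}(G)^2 V_N V_N^*\}$ follows from Lemma \ref{l:seqcon} exactly as in Proposition \ref{p:seqpro}. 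Thus the operator-norm increments are a null sequence.

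The substance of the argument is the derivative part. Here I would write $\de(\T{diag}(G)^2 V_N V_N^*)$ entrywise: the $(n,m)$-entry is $\de(G^2\sqrt{H_n}\cd G^2\sqrt{H_m})$, which by the Leibniz rule equals $\de(G^2\sqrt{H_n})\cd G^2\sqrt{H_m} + G^2\sqrt{H_n}\cd\de(G^2\sqrt{H_m})$. This is precisely the product structure that was packaged into the operators $\de(\T{diag}(G)V_N\Phi^*)$ and its adjoint: indeed the matrix $\de(\T{diag}(G)^2 V_N V_N^*)$ is the sum $\de(\T{diag}(G)V_N\Phi^*)\cd (\Phi V_N^*) + (V_N\Phi^*)\cd \de(\T{diag}(G)V_N\Phi^*)^*$, after suppressing $\rho$ and after matching the middle factor $G = \Phi\Phi^*$. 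Each of the two terms is a product of two bounded sequences of compact operators, one of which converges in operator norm by Lemma \ref{l:caudercom} (Cauchy in $\sK(H_B,H_B^\infty)$, hence convergent) and the other of which converges in operator norm by Lemma \ref{l:seqcon}. A product of a norm-convergent bounded sequence with a norm-convergent bounded sequence is norm-convergent, hence Cauchy; therefore $\{\de(\T{diag}(G)^2 V_N V_N^*)\}_{N=1}^\infty$ is Cauchy in $\sK(H_B^\infty)$. Combining with the operator-norm estimate from the previous paragraph, the sequence is Cauchy in $\|\cd\|_\de$, i.e.\ in $\sK(H_A^\infty)_\de$ (using that all finite truncations genuinely lie in $M_\infty(\sK(H_A)_\de)$, since each entry $G^2\sqrt{H_n}\cd G^2\sqrt{H_m}$ is a product of elements of $\sK(H_A)_\de$ and hence lies in $\sK(H_A)_\de$).

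The main obstacle I anticipate is bookkeeping rather than anything deep: one has to be careful that the "middle factor" in the product decomposition really is $G = \Phi\Phi^*$ and not something that spoils the identification, and that the truncation indices on $V_N$, $V_N^*$ and on the derivative operators $\de(\T{diag}(G)V_N\Phi^*)$ all match up so that the finite matrix $\de(\T{diag}(G)^2 V_N V_N^*)$ is exactly the finite product of the truncated operators (no cross terms with mismatched ranges survive). Once the algebraic identity
\[
\de\big( \T{diag}(G)^2 V_N V_N^* \big) = \de\big( \T{diag}(G) V_N \Phi^* \big)\cd \Phi V_N^* + V_N \Phi^* \cd \de\big( \T{diag}(G) V_N \Phi^* \big)^*
\]
is in place, the convergence is immediate from Lemmas \ref{l:caudercom} and \ref{l:seqcon} together with the product-of-convergent-sequences principle, and the operator-norm convergence is the Proposition \ref{p:seqpro} argument with an extra $\Phi^*\Phi$ inserted in the middle.
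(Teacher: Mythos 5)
Your overall strategy --- handle the operator-norm part via the Proposition \ref{p:seqpro} argument and express the derivative part as products of sequences converging by Lemmas \ref{l:caudercom} and \ref{l:seqcon} --- is exactly the paper's, and the operator-norm half of your argument is correct. The gap is in the algebraic identity you propose for the derivative. The $(n,m)$-entry of $\T{diag}(G)^2 V_N V_N^*$ is $G^3 \sqrt{H_n}\sqrt{H_m} = (G^2\sqrt{H_n}) \cd (G\sqrt{H_m})$, not $(G^2\sqrt{H_n}) \cd (G^2\sqrt{H_m})$ as in your entrywise computation (that product equals $G^4\sqrt{H_n}\sqrt{H_m}$). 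Consequently your symmetric two-term formula is false: comparing $(n,m)$-entries, the left-hand side is $\de(G^2\sqrt{H_n})\cd G\sqrt{H_m} + G^2\sqrt{H_n}\cd\de(G\sqrt{H_m})$, while your right-hand side is $\de(G^2\sqrt{H_n})\cd G\sqrt{H_m} - G\sqrt{H_n}\cd\de(G^2\sqrt{H_m})$, and the difference $2\,G^2\sqrt{H_n}\,\de(G\sqrt{H_m}) + G\sqrt{H_n}\,\de(G)\,G\sqrt{H_m}$ does not vanish in general. This is not mere bookkeeping: the honest Leibniz expansion produces the factor $\de(G\sqrt{H_m})$, which carries only one power of $G$ and is therefore \emph{not} controlled by Lemma \ref{l:delest} (that lemma estimates $\de(\sqrt{H_n}\, G^2)$ only), so the resulting row operator is not known to be Cauchy.

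The repair is an extra manipulation that converts the uncontrolled factor into a controlled one. One first checks the commutation relation $\T{diag}(G) V_N \Phi^* = V_N \Phi^* G$, which gives $\de(G \Phi V_N^*) = -\de\big( \T{diag}(G) V_N \Phi^* \big)^*$, and then writes $G\,\de(\Phi V_N^*) = \de(G\Phi V_N^*) - \de(G)\,\Phi V_N^*$. This trades $\de(G\sqrt{H_m})$ for $\de(G^2\sqrt{H_m})$ at the cost of a third term, yielding the correct identity
\[
\de\big( \T{diag}(G)^2 V_N V_N^* \big) = \de\big( \T{diag}(G) V_N \Phi^* \big) \Phi V_N^* - V_N \Phi^* \de\big( \T{diag}(G) V_N \Phi^* \big)^* - V_N \Phi^* \de(G) \Phi V_N^*
\]
(note also the sign on the middle term, which differs from yours). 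The first two terms converge by Lemmas \ref{l:caudercom} and \ref{l:seqcon} as you intended, and the additional third term converges by Lemma \ref{l:seqcon} alone since $\de(G)$ is a fixed bounded operator. With this corrected identity the rest of your argument goes through.
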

\begin{proof}
We know from Proposition \ref{p:seqpro} that $\T{diag}(G)^2 V_N V_N^*$ converges to $\T{diag}(G)^2 \Psi \Psi^*$ in $\sK(H_A^\infty)$. It is therefore enough to show that $\big\{ \de\big( \T{diag}(G)^2 V_N V_N^*\big) \big\}_{N = 1}^\infty$ is a Cauchy sequence in $\sK(H_B^\infty)$.

Let now $N \in \nn$ and notice that
\[
(\T{diag}(G) V_N \Phi^*)(x) = \sum_{n = 1}^N e_n \cd (G \sqrt{H_n} G)(x) = \sum_{n = 1}^N e_n \cd (\sqrt{H_n} \Phi \Phi^* G)(x)
= (V_N \Phi^* G)(x)
\]
for all $x \in H_A$. We thus have that $\T{diag}(G) V_N \Phi^* = V_N \Phi^* G$.

We may therefore compute as follows,
\[
\begin{split}
& \de\big( \T{diag}(G)^2 V_N V_N^*\big) \\
& \q = \de\big( \T{diag}(G) V_N \Phi^* \Phi V_N^* \big) 
= \de\big( \T{diag}(G) V_N \Phi^*\big) \Phi V_N^* + \T{diag}(G) V_N \Phi^* \de(\Phi V_N^*) \\
& \q = \de\big( \T{diag}(G) V_N \Phi^* \big) \Phi V_N^* + V_N \Phi^* \de(G \Phi V_N^*) - V_N \Phi^* \de(G) \Phi V_N^* \\
& \q = \de\big( \T{diag}(G) V_N \Phi^* \big) \Phi V_N^* - V_N \Phi^* \de\big( \T{diag}(G) V_N \Phi^* \big)^* - V_N \Phi^* \de(G) \Phi V_N^*
\end{split}
\]
The result of the proposition now follows by Lemma \ref{l:seqcon} and Lemma \ref{l:caudercom}.
\end{proof}

\begin{lemma}\label{l:imaden}
The image of $ \Psi^* \T{diag}(G) \Psi : X \to X$ is dense in $X$ and $\T{diag}(G) \Psi \Psi^* = \Psi \Psi^* \T{diag}(G)$.
\end{lemma}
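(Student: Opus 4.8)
The plan is to reduce both assertions to the single operator identity
\[
\T{diag}(G) \, \Psi \;=\; \Psi \, \Phi^* \Phi \q : X \to H_A^\infty .
\]
To establish it I would evaluate both sides on an element $\eta \in X$. Since $\T{diag}(G)$ is bounded and $V_N \to \Psi$ strongly, the left-hand side is $\lim_{N} \sum_{n = 1}^N e_n \cd G \sqrt{H_n} \Phi(\eta)$. Now $\sqrt{H_n}$ lies in the commutative $C^*$-subalgebra of $\wit{\sK(H_A)}$ generated by $G$, so $G \sqrt{H_n} = \sqrt{H_n} G$, while $G \Phi = \Phi \Phi^* \Phi$; hence each summand equals $e_n \cd \sqrt{H_n} \Phi(\Phi^* \Phi \eta) = e_n \cd \Psi_n(\Phi^* \Phi \eta)$, and the partial sums converge to $\Psi(\Phi^* \Phi \eta)$ (again because $V_N \to \Psi$ strongly). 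This proves the identity.

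Granting it, both conclusions of the lemma follow at once. Composing with $\Psi^*$ on the left and using that $\Psi$ is an isometry, i.e. $\Psi^* \Psi = 1_X$ (Proposition \ref{p:riginv}), gives $\Psi^* \T{diag}(G) \Psi = \Psi^* \Psi \, \Phi^* \Phi = \Phi^* \Phi$. Taking adjoints in the identity — note that $\T{diag}(G)$ and $\Phi^* \Phi$ are selfadjoint — yields $\Psi^* \T{diag}(G) = \Phi^* \Phi \, \Psi^*$, whence
\[
\T{diag}(G) \, \Psi \Psi^* \;=\; \Psi \, \Phi^* \Phi \, \Psi^* \;=\; \Psi \, \Psi^* \T{diag}(G) ,
\]
which is the asserted commutation relation. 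It therefore only remains to show that $\Phi^* \Phi : X \to X$ has dense image.

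For this I would use the intertwining identity $\Phi^* (G + 1/n)^{-1} = (\Phi^* \Phi + 1/n)^{-1} \Phi^*$, obtained by multiplying the elementary relation $(\Phi^* \Phi + 1/n) \Phi^* = \Phi^*(\Phi \Phi^* + 1/n) = \Phi^*(G + 1/n)$ by the relevant inverses on the left and on the right. Consequently, for every $\eta \in X$,
\[
\Phi^* (G + 1/n)^{-1} \Phi(\eta) \;=\; (\Phi^* \Phi + 1/n)^{-1} \Phi^* \Phi(\eta) \;=\; \Phi^* \Phi \big( (\Phi^* \Phi + 1/n)^{-1} \eta \big) ,
\]
which lies in the image of $\Phi^* \Phi$. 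On the other hand, Lemma \ref{l:appidecom} (whose approximate unit is precisely the sequence $\Phi^* (G + 1/n)^{-1} \Phi$) gives $\Phi^* (G + 1/n)^{-1} \Phi(\eta) \to \eta$ in norm. Hence every $\eta \in X$ is a norm-limit of elements of the image of $\Phi^* \Phi$, so the image of $\Phi^* \Phi$ — equivalently that of $\Psi^* \T{diag}(G) \Psi$ — is dense in $X$.

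All the computations here are short; the one point that genuinely requires care is the last one, because in the Hilbert $C^*$-module setting one cannot deduce density of a range from triviality of an orthogonal complement (the range of $\Phi$ need not be complemented, and $\ker(\Phi^* \Phi) = 0$ does not force $\Phi^* \Phi$ to have dense range). This is why the argument is routed through the concrete approximate unit of Lemma \ref{l:appidecom} together with the resolvent swap rather than through an abstract closed-range statement. It is also worth keeping the bookkeeping between the two sections in mind: $G$ denotes literally the same operator $\Phi \Phi^*$ in Section \ref{s:conabs} and in Section \ref{s:difabs} (only the ambient Banach $*$-algebra has been enlarged), so Proposition \ref{p:riginv} and Lemma \ref{l:appidecom} apply verbatim.
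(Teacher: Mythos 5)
Your argument is correct, and it is organized differently from the paper's. The paper proves the commutation relation by observing that the matrix entries of $V_N V_N^* \T{diag}(G)$ and $\T{diag}(G) V_N V_N^*$ agree (both being products of functions of $G$) and then passing to the norm limit via Proposition \ref{p:seqpro}; it proves density through a chain of image inclusions, $\T{Im}\big(\Phi^* G (G+1/n)^{-1}\big) \su \T{Im}\big(\Psi^* \T{diag}(G)\Psi\big)$, combined with the norm convergence $\Phi^* G(1/n+G)^{-1} \to \Phi^*$ and the density of $\T{Im}(\Phi^*)$. You instead pin the operator down exactly: the identity $\T{diag}(G)\Psi = \Psi\Phi^*\Phi$ (which the paper never states, though it is implicit in the computation $\T{diag}(G)V_N\Phi^* = V_N\Phi^* G$ inside the proof of Proposition \ref{p:comdelcau}) gives $\Psi^*\T{diag}(G)\Psi = \Phi^*\Phi$ on the nose, from which the commutation relation falls out by taking adjoints, and the density question reduces to the density of $\T{Im}(\Phi^*\Phi)$, handled by the resolvent swap $\Phi^*(G+1/n)^{-1} = (\Phi^*\Phi+1/n)^{-1}\Phi^*$ together with Lemma \ref{l:appidecom}. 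Both proofs ultimately rest on the same two facts --- that $\sqrt{H_n}$ commutes with $G = \Phi\Phi^*$ and that $\Phi^*(G+1/n)^{-1}\Phi$ is an approximate unit for $X$ --- but your version buys a sharper conclusion (the operator $\Psi^*\T{diag}(G)\Psi$, hence $W^*KW$, is exactly the Gram operator $\Phi^*\Phi$ of the generating sequence) at no extra cost, while the paper's version never needs to identify the limit of the partial sums and works only with images. Your closing caution that one cannot argue density via orthogonal complements in a Hilbert $C^*$-module is exactly the right one, and the bookkeeping remark that the $G$ of Section \ref{s:difabs} acts on $H_A$ as the $\Phi\Phi^*$ of Section \ref{s:conabs} is both needed and correct.
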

\begin{proof}
By Proposition \ref{p:seqpro} we know that $\T{diag}(G) \Psi \Psi^* = \lim_{N \to \infty} \T{diag}(G) V_N V_N^*$ and that $\Psi \Psi^* \T{diag}(G) = \lim_{N \to \infty} V_N V_N^* \T{diag}(G)$. To show that $\T{diag}(G) \Psi \Psi^* = \Psi \Psi^* \T{diag}(G)$ is therefore suffices to show that $V_N V_N^* \T{diag}(G) = \T{diag}(G) V_N V_N^*$ for all $N \in \nn$. But this follows by noting that
\[
\big(V_N V_N^* \T{diag}(G)\big)_{n,m} = \sqrt{H_n} G \sqrt{H_m} G = G \sqrt{H_n} G \sqrt{H_m} = \big(\T{diag}(G) V_N V_N^*\big)_{n,m}
\]
for all $N \in \nn$ and all $n,m \in \{1,\ldots,N\}$.

In order to prove that the image of $\Psi^* \T{diag}(G) \Psi : X \to X$ is dense we note that
\[
\begin{split}
& \T{span}_A\big\{ \xi \in \T{Im}(\Phi^* G (G + 1/n)^{-1}) \, | \, n \in \nn \big\}
\su \T{span}_A\big\{ \xi \in  \T{Im}(\Phi^* G \sqrt{H_n}) \, | \, n \in \nn \big\} \\
& \q \su \T{Im}\big(\Psi^* \T{diag}(G) \big) = \T{Im}\big(\Psi^* \T{diag}(G) \Psi \Psi^* \big) \su \T{Im}\big(\Psi^* \T{diag}(G) \Psi\big)
\end{split}
\]

Since the image of $\Phi^* : H_A \to X$ is dense by the standing conditions on our Hilbert $C^*$-module $X$ it therefore suffices to show that the sequence $\{ \Phi^* G (1/n + G)^{-1} \}_{n = 1}^\infty$ of bounded adjointable operators converges in operator norm to $\Phi^* : H_A \to X$. But this follows since
\[
\frac{1}{n} \| \Phi^* (1/n + G)^{-1} \| \leq \frac{1}{\sqrt{n}} 
\]
for all $n \in \nn$. See the proof of Lemma \ref{l:appidecom}.
\end{proof}


We are now ready to prove the differentiable absorption theorem. This is the first main result of the present paper.

\begin{thm}\label{t:difabs}
There exists a bounded adjointable isometry $W : X \to H_A$ and a positive selfadjoint bounded operator $K : H_A \to H_A$ such that
\begin{enumerate}
\item $K P = P K$.
\item $W^* K W : X \to X$ has dense image.
\item $P K \in \sK(H_A)$.
\item $P K^2 \in \sK(H_A)_\de$.
\end{enumerate}
where $P := W W^* : H_A \to H_A$ is the associated orthogonal projection.
\end{thm}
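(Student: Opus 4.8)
The plan is to assemble $W$ and $K$ directly from the operators constructed in Sections \ref{s:conabs} and \ref{s:difabs}. Define $W := U_\al^* \Psi : X \to H_A$, exactly as in the proof of Theorem \ref{t:conabs}; by Proposition \ref{p:riginv} this is a bounded adjointable isometry, and $P = W W^* = U_\al^* \Psi \Psi^* U_\al$. The natural candidate for $K$ is the operator obtained by transporting $\T{diag}(G) \Psi \Psi^*$ through $U_\al$: more precisely, I would check that $\T{diag}(G)$ commutes with $\Psi \Psi^*$ (Lemma \ref{l:imaden}), so that $\T{diag}(G)^{1/2} \Psi \Psi^* \T{diag}(G)^{1/2} = \T{diag}(G) \Psi \Psi^*$ is positive and selfadjoint, and set $K := U_\al^* \big( \T{diag}(G) \Psi \Psi^* \big) U_\al$. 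Since $\Psi\Psi^*$ is the projection onto which $P$ is unitarily conjugate, property (1), $KP = PK$, is then immediate from $\T{diag}(G) \Psi \Psi^* = \Psi \Psi^* \T{diag}(G)$, conjugated by $U_\al$.

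For property (2), note that $W^* K W = \Psi^* U_\al U_\al^* \T{diag}(G) \Psi \Psi^* U_\al U_\al^* \Psi = \Psi^* \T{diag}(G) \Psi \Psi^* \Psi = \Psi^* \T{diag}(G) \Psi$ after using $\Psi^*\Psi = 1_X$; Lemma \ref{l:imaden} says precisely that this operator has dense image. For property (3), $PK = U_\al^* (\Psi\Psi^*)(\T{diag}(G)\Psi\Psi^*) U_\al = U_\al^*\, \T{diag}(G)\Psi\Psi^*\, U_\al$, and Proposition \ref{p:seqpro} identifies $\T{diag}(G)\Psi\Psi^*$ as the operator-norm limit of the compact operators $\T{diag}(G) V_N V_N^*$, hence itself compact; conjugating by the unitary $U_\al$ keeps it in $\sK(H_A)$. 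For property (4), $PK^2 = U_\al^*\, \T{diag}(G)^2 \Psi\Psi^*\, U_\al$ after again absorbing one copy of $\Psi\Psi^*$; Proposition \ref{p:comdelcau} shows $\T{diag}(G)^2 V_N V_N^*$ is Cauchy in $\sK(H_A^\infty)_\de$, so its limit lies in $\sK(H_A^\infty)_\de$, and the isometric isomorphism $\sK(H_A^\infty)_\de \cong \sK(H_A)_\de$ induced by conjugation with $U_\al$ (noted just before Proposition \ref{p:comdelcau}) carries $PK^2$ into $\sK(H_A)_\de$.

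**Main obstacle.** The routine bookkeeping — keeping track of the identifications $\Psi\Psi^* \leftrightarrow P$, $\T{diag}(G) \leftrightarrow$ (something on $H_A$) under $U_\al$, and repeatedly using $\Psi^*\Psi = 1$ to collapse extra projection factors — is where care is needed, but it is genuinely routine. The real content has already been extracted: the only nontrivial inputs are (i) that $\T{diag}(G)$ commutes with $\Psi\Psi^*$, (ii) the operator-norm convergence in Proposition \ref{p:seqpro}, and above all (iii) the $\|\cdot\|_\de$-Cauchy estimate in Proposition \ref{p:comdelcau}, which rests on the delicate bound of Lemma \ref{l:delest}. So in this final theorem there is no new obstacle; the work is to verify that $K := U_\al^*\,\T{diag}(G)\Psi\Psi^*\,U_\al$ simultaneously witnesses all four properties, which it does essentially by construction.
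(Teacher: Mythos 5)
Your proposal is correct and follows essentially the same route as the paper: the paper takes $W := U_\al^* \Psi$ and $K := U_\al^* \T{diag}(G) U_\al$ (without the projection factor) and cites exactly the same three inputs — Lemma \ref{l:imaden}, Proposition \ref{p:seqpro}, and Proposition \ref{p:comdelcau}. Your $K$ differs from the paper's only by the harmless extra factor $\Psi\Psi^*$ (so your $K$ equals the paper's $K$ multiplied by $P$, with which it commutes), and all four properties go through with the same justifications.
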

\begin{proof}
Let $U_\al : H_A \to H_A^\infty$ denote the unitary operator introduced in \eqref{eq:intuni}. The bounded adjointable operator $W := U_\al^* \Psi : X \to H_A$ is then an isometry. Furthermore, define the positive selfadjoint bounded operator $K := U_\al^* \T{diag}(G) U_\al : H_A \to H_A$. The result of the theorem then follows by Lemma \ref{l:imaden}, Proposition \ref{p:seqpro}, and Proposition \ref{p:comdelcau}.
\end{proof}

\begin{remark}\label{r:diffra}
As in Proposition \ref{p:confra}, we may find a sequence $\{ \ze_k \}_{k = 1}^\infty$ of elements in $X$ which implements the isometry $W : X \to H_A$ in the sense that
\[
W(\eta) = \{ \inn{\ze_k,\eta} \}_{k = 1}^\infty \q \T{for all } \eta \in X 
\]
\end{remark}

\section{Gra\ss mann connections}\label{s:gracon}
\emph{Throughout this section we will work in the setting outlined in the beginning of Section \ref{s:difabs}. We will then let $W : X \to H_A$ and $K : H_A \to H_A$ be fixed bounded adjointable operators which satisfy the properties stated in Theorem \ref{t:difabs}. Furthermore, we let $\{\ze_k\}_{k = 1}^\infty$ be a sequence in $X$ which implements $W$, see Remark \ref{r:diffra}}.

We shall in this section see how to construct a dense $A_\de$-submodule of $\sX \su X$ together with a Hermitian $\de$-connection on $\sX$.

In order to construct $\sX$ we recall the following, see \cite[Definition 3.3]{KaLe:SFU} and \cite[Page 119]{Mes:UCN}:

\begin{dfn}
The \emph{standard module} over $A_{\de}$ consists of all sequences $\{a_n\}_{n = 1}^\infty$ of elements in $A_\de$ such that
\[
\{a_n\} \in H_A \q \T{and} \q \{\de(a_n) \} \in H_B
\]
The standard module over $A_\de$ is denoted by $H_{A_\de}$.
\end{dfn}

The standard module $H_{A_\de}$ is a dense $A_\de$-submodule of the standard module $H_A$. Furthermore, it was proved in \cite[Page 505]{KaLe:SFU} that
\[
\inn{x,y} \in A_\de \q \T{for all } x,y \in H_{A_\de}
\]
where $\inn{\cd, \cd} : H_A \ti H_A \to A$ denotes the inner product on $H_A$.

The standard module becomes a Banach space when equipped with the norm
\[
\| \cd \|_\de : \{a_n \} \mapsto \| \{a_n\} \| + \| \{\de(a_n) \} \|
\]

Each element $T \in \sK(H_A)_\de \su \sK(H_A)$ restricts to a bounded operator $T : H_{A_\de} \to H_{A_\de}$. Indeed, the map
\[
M_\infty(A_\de) \ti H_{A_\de} \to H_{A_\de} \q \big( \{a_{ij} \}, \{b_n\} \big) \mapsto \{ \sum_{n = 1}^\infty a_{in} \cd b_n \} 
\]
satisfies the inequality $\| A \cd b \|_\de \leq \| A\|_\de \cd \| b\|_\de$ for all $A \in M_\infty(A_\de)$ and $b \in H_{A_\de}$.

We may now define the $A_\de$-submodule $\sX \su X$ as the following image:
\begin{equation}\label{eq:defc1m}
\sX := \T{Im}\big( W^* K^2 : H_{A_\de} \to X \big)
\end{equation}
The properties of $\sX$ are summarized in the next lemma:

\begin{lemma}\label{l:proc1m}
The $A_\de$-submodule $\sX \su X$ is dense. Furthermore, $W(\xi) \in H_{A_\de}$ and $\inn{\xi,\eta} \in A_\de$ for all $\xi,\eta \in \sX$.
\end{lemma}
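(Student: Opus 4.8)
\textbf{Proof plan for Lemma \ref{l:proc1m}.}

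The plan is to exploit the three structural facts we already have about $W$ and $K$: that $K = U_\al^* \T{diag}(G) U_\al$ is positive with $W^* K W$ having dense image (Theorem \ref{t:difabs}(2)), that $P K^2 \in \sK(H_A)_\de$ (Theorem \ref{t:difabs}(4)), and that $K$ commutes with $P = W W^*$ (Theorem \ref{t:difabs}(1)). First I would address density of $\sX = \T{Im}(W^* K^2 : H_{A_\de} \to X)$. Since $W$ is an isometry, $W^* K^2 = W^* K^2 W W^* = (W^* K W)(W^* K W) W^*$ using $KP = PK$; one checks that the image of $W^* K^2$ contains the image of $W^* K W \circ (W^* K W) \circ W^*$ restricted to $H_{A_\de}$, and since $W^* : H_A \to X$ is surjective and $H_{A_\de}$ is dense in $H_A$ in the $C^*$-norm, while $W^* K W$ has dense image, a routine approximation argument shows $\sX$ is dense in $X$. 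In fact it is cleaner to note that $\T{Im}(W^* K^2) \supseteq \T{Im}\big(W^* K^2 W \cdot W^*(H_{A_\de})\big)$ and that $W^*(H_{A_\de})$ is already dense in $X$; then since $W^* K^2 W = (W^*KW)^2$ is a positive operator on $X$ whose image one can show is dense (its square root has the same closed image as $W^*KW$, which has dense image), density follows.

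Next, for the claim $W(\xi) \in H_{A_\de}$: given $\xi = W^* K^2 b$ with $b \in H_{A_\de}$, we have $W(\xi) = W W^* K^2 b = P K^2 b = (P K^2) b$. Now $P K^2 \in \sK(H_A)_\de$ by Theorem \ref{t:difabs}(4), and we recorded just above that every element of $\sK(H_A)_\de$ restricts to a bounded operator $H_{A_\de} \to H_{A_\de}$; hence $W(\xi) = (PK^2) b \in H_{A_\de}$, as desired.

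Finally, for $\inn{\xi,\eta} \in A_\de$ with $\xi, \eta \in \sX$: since $W$ is an isometry, $\inn{\xi,\eta} = \inn{W(\xi), W(\eta)}$, and by the previous paragraph both $W(\xi)$ and $W(\eta)$ lie in $H_{A_\de}$. The cited result from \cite[Page 505]{KaLe:SFU} says precisely that $\inn{x,y} \in A_\de$ for all $x,y \in H_{A_\de}$, so we conclude $\inn{\xi,\eta} \in A_\de$. I expect the only genuinely non-trivial point to be the density statement — specifically, verifying that the image of the positive operator $(W^*KW)^2$ on $X$ is dense, which reduces to the fact that a positive operator and its square root share the same closure of range, combined with Theorem \ref{t:difabs}(2); everything else is a direct invocation of the properties established in Theorem \ref{t:difabs} and the mapping property of $\sK(H_A)_\de$ on $H_{A_\de}$.
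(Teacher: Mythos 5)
Your proposal is correct and follows essentially the same route as the paper: density of $\sX$ via the factorization $W^* K^2 W = (W^* K W)^2$ (which uses $KP = PK$ and $PW = W$) combined with the density of $H_{A_\de}$ in $H_A$; the membership $W(\xi) = (P K^2)(x) \in H_{A_\de}$ from Theorem \ref{t:difabs}(4) together with the fact that elements of $\sK(H_A)_\de$ preserve $H_{A_\de}$; and the inner-product claim from the isometry property of $W$ and the cited result of \cite{KaLe:SFU}. The only cosmetic difference is that the paper obtains density of the image of $(W^*KW)^2$ directly from the elementary fact that a composition of two bounded operators with dense image again has dense image, so your detour through the square-root/closed-range argument, while valid, is not needed.
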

\begin{proof}
To see that $\sX \su X$ is dense, recall from Theorem \ref{t:difabs} that $W^* K W : X \to X$ has dense image. It follows that
\[
W^* K^2 W = W^* K W W^* K W : X \to X
\]
has dense image as well. In particular, we obtain that $W^* K^2 : H_A \to X$ has dense image, thus the density of $\sX \su X$ follows since $H_{A_\de} \su H_A$ is dense.

Consider now $\xi = (W^* K^2)(x)$ with $x \in H_{A_\de}$. Then $W(\xi) = (W W^* K^2)(x)$. But $W W^* K^2 \in \sK(H_A)_\de$ by Theorem \ref{t:difabs} and therefore $(W W^* K^2)(x) \in H_{A_\de}$ by the observations preceding this lemma. This proves the second claim of the present lemma.

Finally, let $\xi,\eta \in \sX$. Since $W : X \to H_A$ is an isometry, we obtain that $\inn{\xi,\eta} = \inn{W \xi , W \eta}$. But $\inn{W \xi, W \eta} \in A_\de$ since $W \xi, W \eta \in H_{A_\de}$.
\end{proof}

In order to construct the Hermitian $\de$-connection we recall the following concepts:

\begin{dfn}
The $C^*$-algebra of \emph{continuous $\de$-forms} is the smallest $C^*$-subalgebra of $B$ which contains $\rho(a_0)$ and $\de(a_1)$ for all $a_0, a_1 \in A_\de$. This $C^*$-algebra is denoted by $\Om_\de(A)$.
\end{dfn}

We remark that $\Om_\de(A)$ can be viewed as a Hilbert $C^*$-module over $\Om_\de(A)$ in the usual way (this holds for any $C^*$-algebra). Furthermore, we have an injective $*$-homomorphism $\rho : A \to \sL(\Om_\de(A))$ given by $\rho(a)(\om) = \rho(a) \cd \om$ for all $a \in A$ and $\om \in \Om_\de(A)$.

\begin{dfn}
The Hilbert $C^*$-module of \emph{continuous $X$-valued $\de$-forms} is the interior tensor product $X \hot_A \Om_\de(A)$.
\end{dfn}

Define the bounded operator $W \ot 1 : X \hot_A \Om_\de(A) \to H_{\Om_\de(A)}$, $\xi \hot \om \mapsto W(\xi)\cd \om$. Remark that it is non-obvious that $W \ot 1$ is adjointable since we do not assume that the left action of $A$ on $\Om_\de(A)$ is essential. This is none-the-less the case. Indeed, it suffices to recall that $W : X \to H_A$ is implemented by the sequence $\{ \ze_k\}_{k = 1}^\infty$ of elements in $X$. We state the result as a lemma:

\begin{lemma}\label{l:adjexi}
The bounded operator $W \ot 1 : X \hot_A \Om_\de(A) \to H_{\Om_\de(A)}$ is adjointable with adjoint $W^* \ot 1 : H_{\Om_\de(A)} \to X \hot_A \Om_\de(A)$ induced by
\[
W^* \ot 1 : \sum_{k = 1}^N e_k \cd \om_k \mapsto \sum_{k = 1}^N \ze_k \ot \om_k
\]
for all finite sequences $\sum_{k = 1}^N e_k \cd \om_k$ in $H_{\Om_\de(A)}$.
\end{lemma}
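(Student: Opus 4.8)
The plan is to verify directly that the stated formula defines a bounded operator on finite sequences, that it extends to all of $H_{\Om_\de(A)}$, and that the extension is the adjoint of $W \ot 1$. The subtlety flagged in the text is that the left $A$-action on $\Om_\de(A)$ need not be essential, so one cannot simply invoke the universal property of the interior tensor product to produce $W^* \ot 1$ from $W^* : H_A \to X$ in the naive way; instead one must exploit the concrete frame $\{\ze_k\}$ implementing $W$, so that $W^*$ is given by the absolutely convergent formula $W^*(\{a_k\}) = \sum_k \ze_k \cd a_k$ on $H_A$, and this formula manifestly makes sense coordinate-by-coordinate even when applied to $\Om_\de(A)$-valued sequences.

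First I would fix the finite-sequence map $T : \bigoplus_{k=1}^\infty H_{\Om_\de(A)} \to X \hot_A \Om_\de(A)$, $T(\sum_{k=1}^N e_k \cd \om_k) := \sum_{k=1}^N \ze_k \ot \om_k$, and compute for arbitrary $\xi \hot \om \in X \hot_A \Om_\de(A)$ (with $\xi \in X$, $\om \in \Om_\de(A)$) that
\[
\binn{ T\big(\textstyle\sum_{k=1}^N e_k \cd \om_k\big), \xi \hot \om } = \sum_{k=1}^N \binn{\ze_k \ot \om_k, \xi \hot \om} = \sum_{k=1}^N \om_k^* \cd \rho\big(\inn{\ze_k,\xi}\big) \cd \om,
\]
while on the other side
\[
\binn{ \textstyle\sum_{k=1}^N e_k \cd \om_k, (W \ot 1)(\xi \hot \om)} = \binn{\textstyle\sum_{k=1}^N e_k \cd \om_k, W(\xi) \cd \om} = \sum_{k=1}^N \om_k^* \cd \rho\big(\inn{\ze_k,\xi}\big) \cd \om,
\]
using that the $k$-th coordinate of $W(\xi) \in H_A$ is $\inn{\ze_k,\xi}$ by the defining property of the frame $\{\ze_k\}$ (Remark \ref{r:diffra}). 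Since elements of the form $\xi \hot \om$ have dense span in $X \hot_A \Om_\de(A)$, this identity shows $T$ is adjoint to $W \ot 1$ restricted to finite sequences; in particular $T$ is bounded with $\|T\| \le \|W \ot 1\| \le 1$, because $W$ is an isometry.

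Next I would extend $T$ to all of $H_{\Om_\de(A)}$ by continuity: the finite sequences are dense, $T$ is bounded, so it extends uniquely to a bounded operator $W^* \ot 1 : H_{\Om_\de(A)} \to X \hot_A \Om_\de(A)$, and the adjoint relation $\binn{(W^* \ot 1)(x), z} = \binn{x, (W \ot 1)(z)}$ persists for all $x \in H_{\Om_\de(A)}$ and all $z$ in the dense span of elementary tensors, hence for all $z \in X \hot_A \Om_\de(A)$. This establishes that $W \ot 1$ is adjointable with the asserted adjoint. The main (and essentially only) obstacle is the one the text warns about: justifying that $T$ is well defined and bounded \emph{without} essentiality of the action; this is precisely what the explicit frame presentation of $W^*$ buys us, since the formula $\sum e_k \cd \om_k \mapsto \sum \ze_k \ot \om_k$ is visibly $\Om_\de(A)$-linear and its boundedness follows from the adjunction computation above rather than from any density of $A \cd \Om_\de(A)$ in $\Om_\de(A)$. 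Everything else is a routine continuity argument.
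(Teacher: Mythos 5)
Your proposal is correct and follows exactly the route the paper intends: the paper gives no written proof beyond the remark that adjointability follows from the frame presentation $W(\xi)=\{\inn{\ze_k,\xi}\}_{k=1}^\infty$, and your argument is precisely the fleshed-out version of that remark (verify the adjunction identity against elementary tensors, deduce boundedness of the finite-sequence map from the adjunction, extend by density). The computation of both inner products and the boundedness estimate $\|T\| \le \|W\ot 1\|$ are sound, so nothing is missing.
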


We are now in position to define our Hermitian $\de$-connection:

\begin{dfn}\label{d:gracon}
The \emph{Gra\ss mann $\de$-connection} on $\sX$ is defined by
\[
\Na_\de : \sX \to X \hot_A \Om_\de(A) \q \Na_\de := (W^* \ot 1) \de W
\]
where $\de : H_{A_\de} \to H_{\Om_\de(A)}$ is given by $\{a_n\}_{n = 1}^\infty \mapsto \{\de(a_n)\}_{n = 1}^\infty$.
\end{dfn}

The Gra\ss mann $\de$-connection can also be expressed by the formula
\[
\Na_\de : \eta \mapsto \sum_{k = 1}^\infty \ze_k \ot \de(\inn{\ze_k,\eta}) \q \forall \eta \in \sX
\]
where the sum converges in the norm on $X \hot_A \Om_\de(A)$.

We shall soon see that the Gra\ss mann $\de$-connection satisfies the Leibniz rule and is Hermitian. But we need a preliminary observation:

Observe that each element $\eta \in X$ defines a bounded adjointable operator $T_\eta : \Om_\de(A) \to X \hot_A \Om_\de(A)$, $T_\eta : \om \mapsto \eta \ot \om$. The adjoint is given by $T_\eta^* : X \hot_A \Om_\de(A) \to \Om_\de(A)$, $T_\eta^* : \xi \ot \om \mapsto \inn{\eta,\xi} \cd \om$.

\begin{thm}
The Gra\ss mann $\de$-connection $\Na_\de : \sX \to X \hot_A \Om_\de(A)$ is Hermitian and satisfies the Leibniz rule. Thus,
\begin{enumerate}
\item $\de(\inn{\xi,\eta}) = T_{\xi}^* \Na_\de(\eta) - \big( T_\eta^* \Na_\de(\xi) \big)^*$ for all $\xi,\eta \in \sX$.
\item $\Na_\de(\eta \cd a) = \Na_\de(\eta) \cd \rho(a) + \eta \ot \de(a)$ for all $\eta \in \sX$ and $a \in A_\de$.
\end{enumerate}
\end{thm}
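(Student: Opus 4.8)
The plan is to derive both identities directly from the definition $\Na_\de = (W^* \ot 1)\,\de\, W$ together with the fact, established in Lemma~\ref{l:proc1m}, that $W(\xi) \in H_{A_\de}$ for all $\xi \in \sX$, so that $\de W(\xi)$ makes sense in $H_{\Om_\de(A)}$. The key bookkeeping device will be the operator $T_\eta : \Om_\de(A) \to X \hot_A \Om_\de(A)$ and its adjoint, combined with the factorisation $T_\eta = (W^* \ot 1)\, T^B_{W(\eta)}$, where $T^B_x : \Om_\de(A) \to H_{\Om_\de(A)}$ is $\om \mapsto x \cd \om$ for $x \in H_{A_\de}$; this holds because $W$ is implemented by the frame $\{\ze_k\}$ and $W^* \ot 1$ is adjointable by Lemma~\ref{l:adjexi}. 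Since $W$ is an isometry, $\inn{\xi,\eta} = \inn{W\xi, W\eta}$, so everything reduces to the corresponding statements for the standard module $H_{A_\de}$ with its componentwise derivation, where the computations are elementary.

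For the Hermitian property, I would write $\inn{\xi,\eta} = \sum_k \ze_k^* \text{-paired}$ — more precisely, with $W(\xi) = \{a_k\}$ and $W(\eta) = \{b_k\}$ in $H_{A_\de}$, we have $\inn{\xi,\eta} = \sum_k a_k^* b_k$, and applying the closed derivation $\de$ on $A_\de$ together with its Leibniz rule $\de(a^* b) = \de(a^*)\rho(b) + \rho(a^*)\de(b)$ and the relation $\de(a^*) = -\de(a)^*$ gives $\de(\inn{\xi,\eta}) = \sum_k \big( \rho(a_k)^* \de(b_k) - \de(a_k)^* \rho(b_k) \big)$, where convergence of the sums in $\Om_\de(A)$ is guaranteed by $W(\xi), W(\eta) \in H_{A_\de}$. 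I would then recognise the first sum as $T_\xi^* \Na_\de(\eta)$ and the second as $\big( T_\eta^* \Na_\de(\xi) \big)^*$, using the factorisation of $T_\eta$ above and the identity $\de W(\xi) = \{\de(a_k)\}$; the adjoint in the second term exactly accounts for the $-\de(a_k)^* \rho(b_k)$ having its conjugate-transpose-symmetric form. This is essentially a rearrangement, but care is needed to justify interchanging $\de$ with the (norm-convergent, not finite) sum — this is where the closedness of $\de : A_\de \to B$ and the membership in $H_{A_\de}$ do the real work.

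For the Leibniz rule, I would first check that $\eta \cd a \in \sX$ whenever $\eta \in \sX$ and $a \in A_\de$, which follows since $\sX = \T{Im}(W^* K^2 : H_{A_\de} \to X)$ is an $A_\de$-module (right multiplication by $A_\de$ preserves $H_{A_\de}$). Then $W(\eta \cd a) = W(\eta) \cd a$ and, componentwise in $H_{A_\de}$, $\de(W(\eta)\cd a) = \de(W(\eta))\cd \rho(a) + W(\eta) \cd \de(a)$ (the derivation on $H_{A_\de}$ inherits the Leibniz rule from $A_\de$). Applying $W^* \ot 1$ and using its $A$-linearity in the appropriate sense yields $\Na_\de(\eta \cd a) = \Na_\de(\eta)\cd \rho(a) + (W^* \ot 1)(W(\eta) \cd \de(a))$, and the last term equals $\eta \ot \de(a)$ because $(W^* \ot 1) T^B_{W(\eta)} = T_\eta$ and $(W^*\ot1)(W(\eta)\cd\de(a)) = T_\eta(\de(a)) = \eta \ot \de(a)$ — here one uses $W^* W = 1_X$, i.e.\ Proposition~\ref{p:riginv}.

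The main obstacle I anticipate is not any single hard estimate but the careful handling of adjointability and convergence when the left action of $A$ on $\Om_\de(A)$ is non-essential: one must make sure that $W^* \ot 1$ really intertwines the componentwise derivation on $H_{\Om_\de(A)}$ with $\Na_\de$ as claimed, that the infinite sums $\sum_k \ze_k \ot \de(\inn{\ze_k,\eta})$ converge in $X \hot_A \Om_\de(A)$ (already asserted after Definition~\ref{d:gracon}, and traceable to $W(\eta) \in H_{A_\de}$), and that $\de$ commutes with these sums. Once the factorisation $T_\eta = (W^*\ot1) T^B_{W(\eta)}$ and the reduction to the standard module $H_{A_\de}$ are in place, both claims are short formal manipulations.
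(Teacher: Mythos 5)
Your proposal is correct and follows essentially the same route as the paper: transport everything to $H_{A_\de}$ via the isometry $W$, apply the componentwise Leibniz rule and $\de(a^*)=-\de(a)^*$ there, and identify the resulting sums with $T_\xi^*\Na_\de(\eta)$ and $\eta\ot\de(a)$ using the adjointability of $W^*\ot 1$ (Lemma~\ref{l:adjexi}) and $W^*W=1_X$. The points you flag as needing care — convergence of the infinite sums and interchanging $\de$ with them, justified by $W\xi, W\eta\in H_{A_\de}$ and closedness of $\de$ — are exactly the inputs the paper's computation relies on.
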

\begin{proof}
Let $\xi,\eta \in \sX$ with $W \xi = \{a_n\}_{n = 1}^\infty$ and $W \eta = \{ b_n\}_{n = 1}^\infty$. To prove the first claim, we compute as follows:
\[
\begin{split}
\de(\inn{\xi,\eta}) & = \de\big( \sum_{n = 1}^\infty a_n^* b_n \big)
= \sum_{n = 1}^\infty \big( a_n^* \cd \de(b_n) - \de(a_n)^* \cd b_n \big) \\
& = \inn{W \xi, \de(W \eta)} - \big( \sum_{n = 1}^\infty b_n^* \cd \de(a_n) \big)^* \\
& = T_\xi^* (W^* \ot 1)\de(W \eta) - \inn{W \eta, \de(W \xi) }^* \\
& = T_\xi^* \Na_\de(\eta) - \big( T_\eta^* \Na_\de(\xi) \big)^*
\end{split}
\]
Notice that we have suppressed the injective $*$-homomorphism $\rho : A \to B$ in the above computation.

Let now $\eta \in \sX$ and $a \in A_\de$. To prove the second claim, we compute as follows:
\[
\begin{split}
\Na_\de(\eta \cd a) 
& = (W^* \ot 1) \de W(\eta \cd a)
= (W^* \ot 1) \big( (\de W)(\eta) \cd a \big) + (W^* \ot 1) \big( W(\eta) \cd \de(a) \big) \\
& = \Na_\de(\eta) \cd a + \eta \ot \de(a)
\end{split}
\]

These two computations prove the theorem.
\end{proof}

\section{Symmetric lifts of unbounded operators}\label{s:symsel}
In this section we will work in the following more refined situation:

Let $Y$ be a Hilbert $C^*$-module over a $C^*$-algebra $B$ and let $D : \sD(D) \to Y$ be an unbounded selfadjoint and regular operator. We recall that the conditions of selfadjointness and regularity are equivalent to the following two conditions:
\begin{enumerate}
\item The unbounded operator $D : \sD(D) \to Y$ is symmetric.
\item The unbounded operators $D \pm i : \sD(D) \to Y$ are surjective.  
\end{enumerate}
See \cite[Proposition 10.6]{Lan:HCM}.

Let $X$ be a Hilbert $C^*$-module over a $C^*$-algebra $A$ and suppose that $\rho : A \to \sL(Y)$ is an injective $*$-homomorphism. Suppose furthermore that we have a dense $*$-subalgebra $\sA \su A$ such that
\begin{enumerate}
\item $\rho(x) \xi \in \sD(D)$ for all $x \in \sA$ and $\xi \in \sD(D)$ and $[D,\rho(x)] : \sD(D) \to Y$ extends to a bounded adjointable operator $\de(x)$ for all $x \in \sA$.
\item There exists a sequence $\{\xi_n\}_{n = 1}^\infty$ in $X$ which generates $X$ as a Hilbert $C^*$-module and for which
\[
\inn{\xi_n,\xi_m} \in \sA \q \T{for all } n.m \in \nn
\]
\end{enumerate}

Remark that $\de(x^*) = - \de(x)^*$ since $D : \sD(D) \to Y$ is selfadjoint.
\bigskip

We let $W : X \to H_A$ and $K : H_A \to H_A$ be as in Theorem \ref{t:difabs}. Furthermore, we choose a sequence $\{\ze_k\}_{k = 1}^\infty$ in $X$ such that
\[
W(\eta) = \{ \inn{\ze_k,\eta} \}_{k = 1}^\infty \q \T{for all } \eta \in X
\]

Let $X \hot_A Y$ denote the interior tensor product of $X$ and $Y$ over $A$. Define the bounded adjointable operator $W \ot 1 : X \hot_A Y \to Y^\infty$, $W \ot 1 : \xi \ot \eta \mapsto \{ \rho(\inn{\ze_k,\xi})(\eta)\}_{k = 1}^\infty$. The adjoint of $W \ot 1$ is given by $W^* \ot 1 : Y^\infty \to X \hot_A Y$, $W^* \ot 1 : \{ \eta_k\}_{k = 1}^\infty \mapsto \sum_{k = 1}^\infty \ze_k \ot \eta_k$, where the sum converges in the norm-topology on $X \hot_A Y$, see Lemma \ref{l:adjexi}. We remark that $W \ot 1 : X \hot_A Y \to Y^\infty$ is an isometry in the sense that $(W^* \ot 1)(W \ot 1) = 1_{X \hot_A Y}$.

Define the unbounded operator $\T{diag}(D) : \sD(\T{diag}(D)) \to Y^\infty$ by $\T{diag}(D) : \{\eta_k\} \mapsto \{ D \eta_k\}$, where the domain is given by 
\[
\sD(\T{diag}(D)) := \big\{ \{\eta_k\} \in Y^\infty \, | \, \eta_k \in \sD(D) \T{ and } \{ D \eta_k\} \in Y^\infty \big\}
\]
The unbounded operator $\T{diag}(D)$ is then again selfadjoint and regular, indeed we have that $(\T{diag}(D) \pm i)^{-1} : \{ \eta_k\} \mapsto \{ (D \pm i)^{-1} \eta_k \}$ for all $\{\eta_k\} \in Y^\infty$.

Define the right $B$-submodule $\sD( 1 \ot_{\Na} D ) \su X \hot_A Y$ by
\[
\sD(1 \ot_{\Na} D) := \big\{ \si \in X \hot_A Y \, | \, (W \ot 1)(\si) \in \sD(\T{diag}(D)) \big\}
\]

\begin{lemma}\label{l:densymlif}
$\sD(1 \ot_{\Na} D)$ is dense in $X \hot_A Y$.
\end{lemma}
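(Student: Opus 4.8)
The plan is to exhibit a dense subspace of $X \hot_A Y$ that lands inside $\sD(1 \ot_\Na D)$. The natural candidates are the elementary tensors of the form $\xi \ot \zeta$ with $\xi$ drawn from a dense submodule of $X$ and $\zeta \in \sD(D)$, but we must be careful that $(W \ot 1)(\xi \ot \zeta)$ really lies in $\sD(\T{diag}(D))$. Recall $(W \ot 1)(\xi \ot \zeta) = \{ \rho(\inn{\ze_k,\xi})(\zeta) \}_{k=1}^\infty$. For this sequence to be in $\sD(\T{diag}(D))$ we need each $\rho(\inn{\ze_k,\xi})(\zeta) \in \sD(D)$ and $\{ D \rho(\inn{\ze_k,\xi})(\zeta) \} \in Y^\infty$. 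The first requirement is met if $\inn{\ze_k,\xi} \in \sA$, since then $\rho(\inn{\ze_k,\xi})\zeta \in \sD(D)$ by hypothesis (1) on $\sA$; and in that case $D \rho(\inn{\ze_k,\xi})\zeta = \de(\inn{\ze_k,\xi})\zeta + \rho(\inn{\ze_k,\xi}) D\zeta$.

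First I would take $\xi \in \sX$, the dense $A_\de$-submodule constructed in Section \ref{s:gracon}; by Lemma \ref{l:proc1m} we have $W(\xi) \in H_{A_\de}$, so writing $W(\xi) = \{ \inn{\ze_k,\xi}\}_{k=1}^\infty$ we know $\{\inn{\ze_k,\xi}\} \in H_A$ with $\{ \de(\inn{\ze_k,\xi})\} \in H_B$, and each $\inn{\ze_k,\xi} \in A_\de$. (Strictly, one should note the frame coefficients $\inn{\ze_k,\xi}$ lie in $A_\de$ rather than $\sA$; since hypothesis (1) and the Leibniz/closedness arguments extend $\de$ and the module action of $D$ from $\sA$ to $A_\de$, this is harmless — alternatively one replaces $\sA$ by $A_\de$ throughout.) Then for $\zeta \in \sD(D)$ I would estimate
\[
\big\| \T{diag}(D)(W \ot 1)(\xi \ot \zeta) \big\|^2 = \big\| \sum_{k=1}^\infty \binn{ D\rho(\inn{\ze_k,\xi})\zeta , D\rho(\inn{\ze_k,\xi})\zeta } \big\|,
\]
and bound the right-hand side using $D\rho(\inn{\ze_k,\xi})\zeta = \de(\inn{\ze_k,\xi})\zeta + \rho(\inn{\ze_k,\xi})D\zeta$ together with $\{\de(\inn{\ze_k,\xi})\} \in H_B$ and $\{\inn{\ze_k,\xi}\} \in H_A$; the two contributions are controlled by $\| W(\xi)\|_\de \cd (\|\zeta\| + \|D\zeta\|)$ up to constants. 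This shows $(W \ot 1)(\xi \ot \zeta) \in \sD(\T{diag}(D))$, i.e. $\xi \ot \zeta \in \sD(1 \ot_\Na D)$.

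It then remains to see that the linear span of such $\xi \ot \zeta$, with $\xi \in \sX$ and $\zeta \in \sD(D)$, is dense in $X \hot_A Y$. Since $\sX$ is dense in $X$ (Lemma \ref{l:proc1m}) and $\sD(D)$ is dense in $Y$ (it is the domain of a regular selfadjoint operator), the algebraic tensor product $\sX \odot_A \sD(D)$ is dense in $X \odot_A Y$, which in turn is dense in the interior tensor product $X \hot_A Y$; approximating a general elementary tensor $\xi_0 \ot \zeta_0$ by $\xi \ot \zeta$ with $\xi \to \xi_0$ in $X$ and $\zeta \to \zeta_0$ in $Y$ and using continuity of $\ot$ finishes the argument.

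The main obstacle is the estimate in the second paragraph: one must make sure the infinite sum $\sum_k \binn{D\rho(\inn{\ze_k,\xi})\zeta, D\rho(\inn{\ze_k,\xi})\zeta}$ actually converges in $B$ and is norm-bounded, which is where the extra regularity built into $\sX$ — namely that $W(\xi)$ has its frame coefficients not just square-summable in $A$ but with $\de$-derivatives square-summable in $B$ — is essential. Everything else is a routine density argument using Lemma \ref{l:adjexi} and the standing hypotheses (1) and (2).
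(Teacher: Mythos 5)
Your proposal is correct and follows essentially the same route as the paper: take the image of $\sX \ot_{A_\de} \sD(D)$ as the dense subspace, use Lemma \ref{l:proc1m} to get $W(\xi) \in H_{A_\de}$, and split $D\rho(\inn{\ze_k,\xi})\zeta = \de(\inn{\ze_k,\xi})\zeta + \rho(\inn{\ze_k,\xi})D\zeta$, with the first term controlled because $\{\de(\inn{\ze_k,\xi})\}_k$ defines a bounded adjointable operator $Y \to Y^\infty$. Your side remark about $\inn{\ze_k,\xi}$ lying in $A_\de$ rather than $\sA$ is a point the paper also glosses over in the same way, so nothing is missing.
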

\begin{proof}
Let $\sX \su X$ be as in \eqref{eq:defc1m} and let $\sZ \su X \hot_A Y$ denote the image of the algebraic tensor product $\sX \ot_{A_\de} \sD(D)$ in $X \hot_A Y$. Remark that $\sZ \su X \hot_A Y$ is dense since $\sX \su X$ is dense and $\sD(D) \su Y$ is dense. It is therefore enough to show that $(W \ot 1)(\xi \ot \eta) \in \sD(\T{diag}(D))$ for all $\xi \in \sX$ and $\eta \in \sD(D)$.

Let thus $\xi \in \sX$ and $\eta \in \sD(D)$. We first remark that $\rho(\inn{\ze_k,\xi})(\eta) \in \sD(D)$ for all $k \in \nn$ since $\inn{\ze_k,\xi} \in A_\de$. It thus suffices to prove that $\big\{ D\big(\rho(\inn{\ze_k,\xi}) \eta \big) \big\} \in Y^\infty$.

However, we have that
\[
\begin{split}
\big\{ D\big(\rho(\inn{\ze_k,\xi}) \eta \big) \big\}_{k = 1}^\infty
& = \big\{ \de(\inn{\ze_k,\xi}) \eta \big\}_{k = 1}^\infty + \big\{ \rho(\inn{\ze_k,\xi}) D \eta \big\}_{k = 1}^\infty \\
& = \big\{ \de(\inn{\ze_k,\xi}) \eta \big\}_{k = 1}^\infty + (W \ot 1)(\xi \ot D\eta) \\
& = \de(W \xi)(\eta) + (W \ot 1)(\xi \ot D\eta)
\end{split}
\]
We therefore only need to show that $\de(W \xi)(\eta) \in Y^\infty$.

However, by Lemma \ref{l:proc1m} we have that $\de(W \xi) \in \sL(Y)^\infty$ for all $\xi \in \sX$. This implies the result of the lemma since each $\{ T_k\}_{k = 1}^\infty \in \sL(Y)^\infty$ yields a bounded adjointable operator $Y \to Y^\infty$, $\eta \mapsto \{ T_k \eta\}_{k = 1}^\infty$.
\end{proof}

The above lemma allows us to define the following unbounded operator
\[
1 \ot_{\Na} D := (W^* \ot 1) \T{diag}(D) (W \ot 1) : \sD( 1 \ot_{\Na} D) \to X \hot_A Y
\]
which we refer to as the \emph{symmetric lift} of $D$ with respect to the Gra\ss mann $\de$-connection $\Na$.

\begin{prop}
The unbounded operator
\[
1 \ot_{\Na} D := (W^* \ot 1) \T{diag}(D) (W \ot 1) : \sD( 1 \ot_{\Na} D ) \to X \hot_A Y
\]
is symmetric.
\end{prop}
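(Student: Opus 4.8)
The goal is to show that $1 \ot_\Na D = (W^* \ot 1)\T{diag}(D)(W \ot 1)$ is symmetric, i.e.\ that $\inn{(1 \ot_\Na D)\si, \tau} = \inn{\si, (1 \ot_\Na D)\tau}$ for all $\si, \tau \in \sD(1 \ot_\Na D)$. The strategy is to transport the computation over to the standard module $Y^\infty$, where $\T{diag}(D)$ is already known to be selfadjoint (in particular symmetric), and exploit that $W \ot 1$ is an isometry whose range is the invariant subspace cut out by the projection $Q := (W \ot 1)(W^* \ot 1) : Y^\infty \to Y^\infty$.

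First I would record the key compatibility: for $\si \in \sD(1 \ot_\Na D)$ we have $(W \ot 1)\si \in \sD(\T{diag}(D))$ by definition of the domain, and conversely $(W \ot 1)\big((1 \ot_\Na D)\si\big) = (W \ot 1)(W^* \ot 1)\T{diag}(D)(W \ot 1)\si = Q\,\T{diag}(D)(W \ot 1)\si$. The main technical point is then that $Q$ maps $\sD(\T{diag}(D))$ into itself and commutes with $\T{diag}(D)$ on that domain; equivalently $Q\,\T{diag}(D)(W \ot 1)\si = \T{diag}(D)(W \ot 1)\si$, so that in fact $(W \ot 1)\big((1 \ot_\Na D)\si\big) = \T{diag}(D)(W \ot 1)\si$. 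To see this I would use that $Q = P \ot 1$ for the projection $P = WW^*$ (up to the unitary reshuffle $U_\al$), together with the fact from Theorem~\ref{t:difabs} that $KP = PK$ and $PK \in \sK(H_A)$; more to the point, the proof of Lemma~\ref{l:densymlif} already exhibits, for $\si = \xi \ot \eta$ with $\xi \in \sX$, $\eta \in \sD(D)$, the explicit formula $\T{diag}(D)(W \ot 1)(\xi \ot \eta) = \de(W\xi)(\eta) + (W \ot 1)(\xi \ot D\eta)$, whose right-hand side manifestly lies in the range of $W \ot 1$ applied to $\sD$-type elements, since $\de(W\xi) \in \sL(Y)^\infty$ restricts $H_{A_\de}$-sequences appropriately and $(W^*\ot 1)(W \ot 1) = 1$. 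Thus on the dense core $\sZ = \sX \ot_{A_\de}\sD(D)$ one checks $Q$ commutes with $\T{diag}(D)$, and then passes to all of $\sD(1\ot_\Na D)$ by the closedness of $\T{diag}(D)$ together with boundedness of $Q$.

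Granting that, the symmetry is immediate: for $\si, \tau \in \sD(1 \ot_\Na D)$,
\[
\inn{(1 \ot_\Na D)\si, \tau} = \inn{(W \ot 1)(1 \ot_\Na D)\si, (W \ot 1)\tau} = \inn{\T{diag}(D)(W \ot 1)\si, (W \ot 1)\tau},
\]
using that $W \ot 1$ is an isometry and the identity $(W \ot 1)(1 \ot_\Na D) = \T{diag}(D)(W \ot 1)$ on the domain. Since $(W \ot 1)\si, (W \ot 1)\tau \in \sD(\T{diag}(D))$ and $\T{diag}(D)$ is symmetric, this equals $\inn{(W \ot 1)\si, \T{diag}(D)(W \ot 1)\tau} = \inn{(W \ot 1)\si, (W \ot 1)(1 \ot_\Na D)\tau} = \inn{\si, (1 \ot_\Na D)\tau}$, again by the isometry property. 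This closes the argument.

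The main obstacle I expect is justifying that $Q$ preserves $\sD(\T{diag}(D))$ and commutes with $\T{diag}(D)$ there — i.e.\ that applying $Q$ to an element of the domain does not destroy membership in the domain. The subtlety is exactly the non-adjointability/essentiality issue flagged around Lemma~\ref{l:adjexi}: the commutator $[\,Q, \T{diag}(D)\,]$ involves the ``derivative of $P$'', which is the very object that need not be bounded and which the operator $K$ was introduced to regularize. Here the saving grace is that $W \ot 1$ applied to $X \hot_A Y$ already lands in the range of $Q$, so we only ever need the identity $Q\,\T{diag}(D)(W \ot 1)\si = \T{diag}(D)(W \ot 1)\si$ rather than a commutator estimate on all of $\sD(\T{diag}(D))$; proving this cleanly on the core $\sZ$ via the formula from Lemma~\ref{l:densymlif} and then extending by a closedness/density argument is the crux of the proof.
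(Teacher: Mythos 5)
There is a genuine gap: the identity you single out as the crux, namely $(W \ot 1)\big((1 \ot_\Na D)\si\big) = \T{diag}(D)(W \ot 1)\si$ — equivalently $Q\,\T{diag}(D)(W \ot 1)\si = \T{diag}(D)(W \ot 1)\si$ for $Q = (W \ot 1)(W^* \ot 1)$ — is false in general, and no closedness/density argument will rescue it. It asserts that $\T{diag}(D)$ maps $\T{Im}(W \ot 1) \cap \sD(\T{diag}(D))$ back into $\T{Im}(Q)$, i.e.\ that $Q$ reduces $\T{diag}(D)$ on the relevant vectors; the obstruction is exactly the unbounded ``derivative of $P$'' you mention. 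The half-line example of Section \ref{s:symsel} kills the claim outright: there $(W \ot 1)(g) = \{\sqrt{H_n}\,\xi\, g\}_n$ and $\T{diag}(D)(W \ot 1)(g) = \{ i \sqrt{H_n}\,\xi\, g' + i (\sqrt{H_n}\,\xi)'\, g\}_n$, while $(1 \ot_\Na D)(g) = i g'$; so your identity would force $(\sqrt{H_n}\,\xi)' g = 0$ for all $n$, which fails for generic $g \in C_c^\infty\big((0,\infty)\big) \su \sX$. (Indeed, if the identity held, $1 \ot_\Na D$ would be the restriction of the selfadjoint operator $\T{diag}(D)$ to a reducing submodule, and the failure of essential selfadjointness exhibited in that example could not occur.) Likewise, in the formula from Lemma \ref{l:densymlif} the summand $\de(W\xi)(\eta)$ does \emph{not} manifestly lie in the image of $W \ot 1$.

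The good news is that this claim is not needed at all, and your final computation survives once its second equality is justified differently: since $Q$ is a selfadjoint idempotent with $Q(W \ot 1) = W \ot 1$, one has
\[
\binn{(W \ot 1)(1 \ot_\Na D)\si, (W \ot 1)\tau} = \binn{Q\,\T{diag}(D)(W \ot 1)\si, (W \ot 1)\tau} = \binn{\T{diag}(D)(W \ot 1)\si, (W \ot 1)\tau}
\]
by moving $Q$ to the \emph{other side} of the inner product, not by commuting it past $\T{diag}(D)$. This is precisely the paper's (three-line) proof: $\binn{(1 \ot_\Na D)\si,\te} = \binn{\T{diag}(D)(W \ot 1)\si,(W \ot 1)\te} = \binn{(W \ot 1)\si,\T{diag}(D)(W \ot 1)\te} = \binn{\si,(1 \ot_\Na D)\te}$, which uses only that $(W^* \ot 1)^* = W \ot 1$, that $(W \ot 1)\si$ and $(W \ot 1)\te$ lie in $\sD(\T{diag}(D))$ by the very definition of $\sD(1 \ot_\Na D)$, and that $\T{diag}(D)$ is symmetric. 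So: delete the reducing-subspace claim, keep the inner-product computation, and the proof is complete.
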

\begin{proof}
This follows since $\T{diag}(D) : \sD(\T{diag}(D)) \to Y^\infty$ is selfadjoint. Indeed,
\[
\begin{split}
\binn{(1 \ot_{\Na} D) \si, \te} & = \binn{\T{diag}(D) (W \ot 1) \si, (W \ot 1) \te } = \binn{\si,(W^* \ot 1) \T{diag}(D) (W \ot 1) \te} \\
& = \binn{\si, (1 \ot_\Na D) \te}
\end{split}
\]
for all $\si,\te \in \sD(1 \ot_\Na D)$.
\end{proof}


We remark that the symmetric lift only depends on $D : \sD(D) \to Y$ and the bounded adjointable isometry $W : X \to H_A$. It does not depend on the right $A_\de$-submodule $\sX \su X$ defined in \eqref{eq:defc1m}. The existence of $\sX$ is however crucial for proving that the symmetric lift is densely defined.

The final result of this section relates the symmetric lifts to the Gra\ss mann $\de$-connection. Thus, let $\Na_\de : \sX \to X \hot_A \Om_\de(\sA)$ denote the Gra\ss mann connection, see Definition \ref{d:gracon}.

\begin{lemma}\label{l:symgra}
Let $\si =  \xi \ot \eta \in \sX \ot_{A_\de} \sD(D)$. Then $\si \in \sD(1 \ot_{\Na} D)$ and 
\[
(1 \ot_\Na D)(\si) = \Na_\de(\xi)(\eta) + \xi \ot D \eta
\]
\end{lemma}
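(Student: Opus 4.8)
The plan is to read the statement off the computation that was already carried out inside the proof of Lemma~\ref{l:densymlif}, together with the explicit formula for $\Na_\de$.

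First I would dispose of the membership claim $\si \in \sD(1 \ot_\Na D)$. By definition of the domain this amounts to checking that $(W \ot 1)(\xi \ot \eta) = \{ \rho(\inn{\ze_k,\xi})\eta \}_{k = 1}^\infty$ lies in $\sD(\T{diag}(D))$, and precisely this was verified in the proof of Lemma~\ref{l:densymlif}: since $\inn{\ze_k,\xi} \in A_\de$ we have $\rho(\inn{\ze_k,\xi})\eta \in \sD(D)$ for each $k$, and the displayed identity there shows $\big\{ D\big( \rho(\inn{\ze_k,\xi})\eta\big) \big\}_{k = 1}^\infty \in Y^\infty$. So there is nothing new to prove here; one simply cites that computation.

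Next I would apply $\T{diag}(D)$ to $(W \ot 1)(\si)$. The same Leibniz-type computation as in Lemma~\ref{l:densymlif} gives
\[
\T{diag}(D)(W \ot 1)(\xi \ot \eta) = \de(W\xi)(\eta) + (W \ot 1)(\xi \ot D\eta),
\]
where $\de(W\xi) = \{ \de(\inn{\ze_k,\xi}) \}_{k = 1}^\infty$; the point that $W\xi \in H_{A_\de}$, hence that $\de(W\xi) \in \sL(Y)^\infty$ and so acts on $\eta$ to produce an element of $Y^\infty$, is exactly Lemma~\ref{l:proc1m}. Then I apply $W^* \ot 1 : Y^\infty \to X \hot_A Y$ to both sides. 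Since $W \ot 1$ is an isometry we have $(W^* \ot 1)(W \ot 1) = 1_{X \hot_A Y}$, so the second summand contributes $\xi \ot D\eta$. For the first summand I invoke the definition $\Na_\de(\xi) = (W^* \ot 1)\de(W\xi) = \sum_{k = 1}^\infty \ze_k \ot \de(\inn{\ze_k,\xi})$ together with the fact that the formula $\sum_k e_k \cd y_k \mapsto \sum_k \ze_k \ot y_k$ defining $W^* \ot 1$ commutes with evaluation at $\eta$; this gives $(W^* \ot 1)\big(\de(W\xi)(\eta)\big) = \sum_{k} \ze_k \ot \de(\inn{\ze_k,\xi})\eta = \Na_\de(\xi)(\eta)$. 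Adding the two contributions yields $(1 \ot_\Na D)(\si) = \Na_\de(\xi)(\eta) + \xi \ot D\eta$, as desired.

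The only step requiring any care — and it is genuinely bookkeeping rather than a real obstacle — is this last identification: one must fix the meaning of $\Na_\de(\xi)(\eta) \in X \hot_A Y$ (via the inclusion $\Om_\de(A) \su \sL(Y)$, i.e.\ as the norm-convergent series $\sum_k \ze_k \ot \de(\inn{\ze_k,\xi})\eta$, which converges because $\de(W\xi)$ defines a bounded operator $Y \to Y^\infty$) and then observe that evaluation at $\eta$ intertwines the bounded operators $W^* \ot 1$ on $H_{\Om_\de(A)}$ and on $Y^\infty$; this is immediate from the explicit formulas for these two maps and the boundedness of evaluation at $\eta$, so it passes to the limit of finite sequences without difficulty.
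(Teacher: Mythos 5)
Your proposal is correct and follows essentially the same route as the paper's own proof: cite the computation in the proof of Lemma~\ref{l:densymlif} for the domain membership and the Leibniz identity $\T{diag}(D)(W\ot 1)(\xi\ot\eta)=\de(W\xi)(\eta)+(W\ot 1)(\xi\ot D\eta)$, apply $W^*\ot 1$ using $(W^*\ot 1)(W\ot 1)=1$, and identify $\sum_k \ze_k\ot\de(\inn{\ze_k,\xi})(\eta)$ with $\Na_\de(\xi)(\eta)$. The only difference is that you spell out the final identification (evaluation at $\eta$ intertwining the two versions of $W^*\ot 1$) more explicitly than the paper, which simply asserts it.
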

Remark that we have tacitly identitifed $\si$ with its image in $X \hot_A Y$.

\begin{proof}
By the proof of Lemma \ref{l:densymlif} we have that $\si \in \sD(1 \ot_{\Na} D)$ and that
\[
\begin{split}
(1 \ot_{\Na} D)(\si) & = (W^* \ot 1) \T{diag}(D) (W \ot 1)(\si) \\
& = (W^* \ot 1) \Big( \big\{\de(\inn{\ze_k,\xi}) \eta \big\}_{k = 1}^\infty + (W \ot 1)(\xi \ot D\eta) \Big) \\
& = \sum_{k = 1}^\infty \ze_k \ot \de(\inn{\ze_k,\xi})(\eta) + \xi \ot D \eta
\end{split}
\]
But this proves the lemma since $\sum_{k = 1}^\infty \ze_k \ot \de(\inn{\ze_k,\xi})(\eta) = \Na_\de(\xi)(\eta)$.
\end{proof}

In order to give the reader some feeling for what might be expected from symmetric lifts, we end this section by giving a basic example.

\subsection{Example: The half-line}
Let us consider the case where $X = C_0\big( (0,\infty)\big)$ consists of continuous functions on the half-line which vanish at $0$ and at $\infty$. We may then give $X$ the structure of a Hilbert $C^*$-module over the $C^*$-algebra $A = C_0(\rr)$ of continuous functions on the real line which vanish at $\pm \infty$. On top of this, we let $L^2(\rr)$ be the Hilbert space of (equivalence classes of) square integrable functions on the real line. This Hilbert space comes equipped with an injective $*$-homomorphism $\rho : C_0(\rr) \to \sL(L^2(\rr))$ given by point-wise multiplication $\rho(f)(\xi) := f \cd \xi$. Furthermore, we let $D : \sD(D) \to L^2(\rr)$ denote the unbounded selfadjoint operator obtained as the closure of the Dirac operator
\[
i \frac{d}{dt} : C_c^\infty(\rr) \to L^2(\rr)
\]
where $C_c^\infty(\rr) \su L^2(\rr)$ denotes the smooth compactly supported functions defined on $\rr$. We define the dense $*$-subalgebra $A_\de \su A$, by
\[
A_\de := \big\{ f \in C_0(\rr) \mid f \T{ is differentiable with } \frac{df}{dt} \in C_0(\rr) \big\} 
\]

The Hilbert $C^*$-module $X = C_0\big( (0,\infty)\big)$ is then generated by a single element. Indeed, we may choose a nowhere-vanishing differentiable function $\xi : (0,\infty) \to [0,1]$ such that $\xi, \frac{d \xi}{dt} \in X$. We then have that
\[
X = \T{cl}\big\{ \xi \cd f \mid f \in A \big\} \q \T{and} \q \inn{\xi,\xi} = \xi^2 \in A_\de
\]
where $\T{cl}(\cd)$ refers to the closure in supremum-norm. We may finally arrange that
\[
\| \inn{\xi,\xi} \|_\de = \sup_{t \in \rr} | \xi^2(t) | + 2 \sup_{t \in \rr} |(\xi \cd \frac{d \xi}{dt})(t) | \leq 1
\]

The bounded adjointable isometry $W : X \to H_A$ is then given by
\[
W : g \mapsto \big\{ \sqrt{H_n} \cd \inn{\xi,g} \big\}_{n = 1}^\infty = \big\{ (1 + n \xi^2)^{-1/2} (1 + (n-1) \xi^2)^{-1/2} \xi \cd g \big\}_{n = 1}^\infty
\]
and the bounded adjointable positive operator $K : H_A \to H_A$ is given by
\[
K : \{f_n\}_{n = 1}^\infty \mapsto \{ \xi^2 \cd f_n \}_{n = 1}^\infty
\]
The dense $A_\de$-submodule $\sX \su X$ is defined as the image $\sX := \T{Im}\big( W^* K^2 : H_{A_\de} \to X \big)$. It is then not hard to see that we have the inclusion 
\[
C_c^\infty\big( (0,\infty) \big) \su \sX
\]

The interior tensor product $X \hot_A L^2(\rr)$ is unitarily isomorphic to the Hilbert space $L^2\big( (0,\infty) \big)$ of square integrable functions on the half-line. Under this isomorphism the isometry $W \ot 1 : L^2\big( (0,\infty) \big) \to H_{L^2(\rr)}$ is given by
\[
W \ot 1 : g \mapsto \big\{ (1 + n \xi^2)^{-1/2} (1 + (n-1) \xi^2)^{-1/2} \xi \cd g \big\}_{n = 1}^\infty
\]

We are interested in obtaining a better understanding of the symmetric lift
\[
1 \ot_{\Na} D := (W^* \ot 1) \T{diag}(D) (W \ot 1) : \sD( 1 \ot_\Na D ) \to L^2\big( (0,\infty) \big)
\]
We first note that it follows by the proof of Lemma \ref{l:symgra} and the inclusion $C_c^\infty\big( (0,\infty) \big) \su \sX$ that
\[
C_c^\infty\big( (0,\infty) \big) \su \sD( 1 \ot_\Na D )
\]

Now, for each $g \in C_c^\infty\big( (0,\infty) \big)$ we may compute as follows:
\[
\begin{split}
(1 \ot_{\Na} D)(g) & = i \sum_{n = 1}^\infty \xi \sqrt{H_n} \frac{d}{dt}\big( \xi \sqrt{H_n} g \big)
= i \sum_{n = 1}^\infty \big( \xi^2 \cd H_n \cd \frac{d g}{dt} + 1/2 \cd g \cd \frac{d(\xi^2 \cd H_n)}{dt} \big) \\
& = i \frac{dg}{dt} + i/2 \cd \lim_{N \to \infty} \big( g \cd \frac{d(\xi^2 \cd (\xi^2 + 1/N)^{-1})}{dt} \big) \\
& = i \frac{dg}{dt} - i/2 \cd \lim_{N \to \infty} \big( g/N \cd \frac{d( (\xi^2 + 1/N)^{-1})}{dt} \big)
= i \frac{dg}{dt} 
\end{split}
\]
where the limit is taken in the norm on $L^2\big( (0,\infty) \big)$.

Thus, we obtain that $1 \ot_{\Na} D$ is a symmetric extension of the Dirac operator
\[
\dir := i \frac{d}{dt} : C_c^\infty\big(  (0,\infty)\big) \to L^2\big( (0,\infty)\big)
\]

Now, it is easily verified that $\T{Ker}\big( i + \dir^* \big) = \cc \cd \exp(-t)$ and that $\T{Ker}\big( i - \dir^* \big) = \{0\}$. It thus follows by \cite[Chapter X.1, Corollary]{SiRe:FS} that $1 \ot_{\Na} D$ is \emph{not} essentially selfadjoint, since $\dir : C_c^\infty\big( (0,\infty) \big) \to L^2\big( (0,\infty) \big)$ has no selfadjoint extensions.

\section{Compositions of regular unbounded operators}\label{s:comregunb}
\emph{Throughout this section, $X$ will be a Hilbert $C^*$-module over a $C^*$-algebra $A$, $D : \sD(D) \to X$ will be a selfadjoint, regular operator on $X$, and $x \in \sL(X)$ will be a bounded selfadjoint unbounded operator on $X$ such that:
\[
x \xi \in \sD(D) \E{ for all } \xi \in \sD(D) \, \, \, \, \, \E{and} \, \, \, \, \, [D,x] : \sD(D) \to X \E{ is bounded}
\]
The bounded extension of $[D,x]$ will be denoted by $\de(x)$.}

We remark that $\de(x)$ is automatically adjointable with $\de(x)^* = -\de(x)$. 
\medskip

\emph{The aim of this section is to study the regularity of the compositions $Dx$, $\T{cl}(x D)$, and $\T{cl}(x D x)$, where $\T{cl}(\dir)$ refers to the closure of an unbounded closable operator $\dir : \sD(\dir) \to X$. This regularity issue has been studied in detail by S. L. Woronowicz under the assumption that $x$ is invertible, see \cite[Section 2, Example 2 and 3]{Wor:UAQ}.}
\medskip

The general investigations of this section will allow us to obtain a better understanding of the symmetric lift introduced in Section \ref{s:symsel}.

Our main tool is the local-global principle for regular operators, see \cite[Theorem 4.2]{KaLe:LGR}. For the readers convenience we now recall the statement of this result: Let $\dir : \sD(\dir) \to X$ be a closed unbounded operator wih a densely defined adjoint $\dir^*$. For each state $\rho : A \to \cc$ we have the \emph{localization} $X_\rho$ of $X$. This is the Hilbert space obtained as the completion of $X/N_\rho$ with respect to the inner product $\inn{[\xi],[\eta]}_\rho := \rho(\inn{\xi,\eta})$, where $N_\rho := \{ \xi \in X \, | \, \rho(\inn{\xi,\xi}) = 0 \}$. The unbounded operator $\dir$ then induces an unbounded operator on $X_\rho$, 
\[
\dir_\rho : \sD(\dir_\rho) \to X_\rho \q [\xi] \mapsto [\dir \xi]
\]
with domain $\sD(\dir_\rho)$ defined as the image of $\sD(\dir)$ in $X_\rho$. The \emph{localization} of $\dir$ at the state $\rho$ is the unbounded operator $\T{cl}(\dir_{\rho})$.

\begin{thm}[Local-global principle]\label{t:locglopri}
The closed unbounded operator $\dir : \sD(\dir) \to X$ with densely defined adjoint $\dir^*$ is regular if and only if
\[
(\dir_{\rho})^* = \T{cl}\big( (\dir^*)_{\rho} \big)
\]
for all states $\rho : A \to \cc$.
\end{thm}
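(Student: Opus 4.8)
The plan is to recast the assertion as a statement about orthogonal complementability of the graph and then transport it through localization. First, the easy half: for a state $\rho$ write $\pi_\rho : X \to X_\rho$ for the canonical norm‑dense‑range contraction into the localization. The relation $\inn{\dir \xi,\eta} = \inn{\xi,\dir^* \eta}$ descends to $X_\rho$, so $(\dir^*)_\rho \su (\dir_\rho)^*$; since $\sD(\dir)$ and $\sD(\dir^*)$ are dense in $X$ their $\pi_\rho$‑images are dense in $X_\rho$, whence $\dir_\rho$ is closable, $(\dir_\rho)^* = \big(\T{cl}(\dir_\rho)\big)^*$ is densely defined, and the inclusion $\T{cl}\big((\dir^*)_\rho\big) \su (\dir_\rho)^*$ always holds. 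Thus only the reverse inclusion carries content. I also record the standard fact that a closed, densely defined $\dir$ with densely defined adjoint is regular if and only if its graph $\T{Gr}(\dir) := \{(\xi,\dir\xi) : \xi \in \sD(\dir)\}$ is orthogonally complemented in $X \op X$; since one always has $\T{Gr}(\dir)^\perp = V\,\T{Gr}(\dir^*)$, where $V \in \sL(X\op X)$ is the unitary $V(\xi,\eta) := (-\eta,\xi)$, this is the same as $\T{Gr}(\dir) + V\,\T{Gr}(\dir^*) = X \op X$. Finally, one checks directly that $\overline{\pi_\rho(\T{Gr}(\dir))} = \T{Gr}\big(\T{cl}(\dir_\rho)\big)$ and $\overline{\pi_\rho(V\,\T{Gr}(\dir^*))} = V\,\T{Gr}\big(\T{cl}((\dir^*)_\rho)\big)$, since the closure of a closable operator has graph the closure of its graph and $V$, $\pi_\rho$ are continuous.

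For the implication ``regular $\Rightarrow$ the local identity'', let $P \in \sL(X \op X)$ be the orthogonal projection onto $\T{Gr}(\dir)$, so $1-P$ projects onto $V\,\T{Gr}(\dir^*)$. Localization is $*$‑functorial on $\sL(X\op X)$, so $P_\rho$ is an orthogonal projection on the Hilbert space $X_\rho\op X_\rho$ with $\T{Im}(P_\rho) = \overline{\pi_\rho(\T{Gr}(\dir))} = \T{Gr}\big(\T{cl}(\dir_\rho)\big)$ and $\T{Im}(1-P_\rho) = \overline{\pi_\rho(V\,\T{Gr}(\dir^*))} = V\,\T{Gr}\big(\T{cl}((\dir^*)_\rho)\big)$. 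On the other hand $\T{Im}(1-P_\rho)$ is the orthogonal complement of $\T{Im}(P_\rho)$ in a Hilbert space, hence equals $V\,\T{Gr}\big((\dir_\rho)^*\big)$, because the orthogonal complement of the graph of a closed densely defined Hilbert‑space operator $S$ is $V\,\T{Gr}(S^*)$. Comparing the two expressions gives $\T{Gr}\big(\T{cl}((\dir^*)_\rho)\big) = \T{Gr}\big((\dir_\rho)^*\big)$, i.e. $\T{cl}\big((\dir^*)_\rho\big) = (\dir_\rho)^*$.

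For ``the local identity $\Rightarrow$ regular'' I reverse the bookkeeping. Fixing $\rho$ and using again the two identities above together with the Hilbert‑space graph fact, one sees that $\T{cl}\big((\dir^*)_\rho\big) = (\dir_\rho)^*$ is equivalent to the two (always orthogonal) closed subspaces $\overline{\pi_\rho(\T{Gr}(\dir))}$ and $\overline{\pi_\rho(V\,\T{Gr}(\dir^*))}$ being orthogonal complements of each other in $X_\rho\op X_\rho$, hence to
\[
\overline{\pi_\rho\big(\T{Gr}(\dir) + V\,\T{Gr}(\dir^*)\big)} = X_\rho \op X_\rho .
\]
So the hypothesis amounts to saying that the submodule $M := \T{Gr}(\dir) + V\,\T{Gr}(\dir^*) \su X\op X$ is ``locally everything'', and by the first paragraph it remains exactly to deduce $M = X\op X$.

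This last step is the crux and, I expect, the real obstacle. Note that $M^\perp = \T{Gr}(\dir)^\perp \cap \big(V\,\T{Gr}(\dir^*)\big)^\perp = V\,\T{Gr}(\dir^*) \cap \big(V\,\T{Gr}(\dir^*)\big)^\perp = 0$ automatically (a submodule always meets its orthogonal complement trivially), so the cheap implication $M^\perp = 0 \Rightarrow M = X\op X$ is useless --- this is precisely the pathology (a proper closed submodule with trivial orthogonal complement, equivalently a submodule of the form $N + N^\perp$ that fails to be the whole module although all its localizations are) which distinguishes general closed operators from regular ones. What is genuinely needed is a local‑global principle for orthogonal complementability of submodules: if $M \ne X\op X$ then some state $\rho$ must detect a deficiency of $\overline{\pi_\rho(M)}$ inside $X_\rho\op X_\rho$. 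I would isolate this as a separate lemma and prove it by a fibrewise argument --- passing to pure states, i.e. to the primitive ideal space of the base $C^*$‑algebra, and exploiting the special shape $M = \T{Gr}(\dir) + \T{Gr}(\dir)^\perp$ --- and this is where the bulk of the work sits. Granted that principle, both implications above are formal consequences of the graph decomposition for adjoints and the $*$‑functoriality of localization.
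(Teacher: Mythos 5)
First, a point of reference: the paper does not prove this theorem at all --- it is quoted verbatim from \cite[Theorem 4.2]{KaLe:LGR} ``for the readers convenience'', so there is no in-paper proof to compare against; your proposal has to stand on its own. Your framework is sound and is in the spirit of the cited proof: the identity $\T{Gr}(\dir)^\perp = V\,\T{Gr}(\dir^*)$, the characterization of regularity as $\T{Gr}(\dir) \op V\,\T{Gr}(\dir^*) = X \op X$, the $*$-functoriality of localization, and the computation $\T{Im}(P_\rho) = \overline{\pi_\rho(\T{Im}\,P)}$ together give a complete and correct proof of the direction ``regular $\Rightarrow$ local identity'', and a correct reduction of the converse to the statement: if $M := \T{Gr}(\dir) + V\,\T{Gr}(\dir^*)$ is dense in every localization, then $M = X \op X$. (One small point you elide: $M$ is automatically norm-closed, being the sum of two mutually orthogonal closed submodules --- if $\eta_n + \ze_n$ converges with $\eta_n \perp \ze_n$, then $\inn{\eta_n - \eta_m, \eta_n - \eta_m} \leq \inn{(\eta_n+\ze_n)-(\eta_m+\ze_m),(\eta_n+\ze_n)-(\eta_m+\ze_m)}$ forces both summand sequences to be Cauchy --- so ``dense in $X \op X$'' does upgrade to ``equal to $X \op X$''. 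You need this remark to close the loop.)

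The genuine gap is the lemma you defer to the end, and the route you sketch for it points the wrong way. What is needed is: \emph{a norm-closed submodule $W \su Z$ whose image is dense in $Z_\rho$ for every state $\rho$ must equal $Z$}. Your plan to prove this ``fibrewise'', by passing to pure states and the primitive ideal space, is exactly the version of the local-global principle that is \emph{not} available in general --- in \cite{KaLe:LGR} the pure-state variant is only established under additional hypotheses, and the convexity of the full state space is essential to the argument. The correct proof is a separation argument over all states: if $\xi \in Z$ has distance $d > 0$ from $W$, the operator Jensen inequality $\sum_i t_i x_i^* x_i \geq \big(\sum_i t_i x_i\big)^* \big(\sum_i t_i x_i\big)$ shows that every element of the convex hull of $\{\inn{\xi - \eta, \xi - \eta} \mid \eta \in W\}$ dominates some $\inn{\xi - \eta', \xi - \eta'}$ with $\eta' \in W$ and hence has norm at least $d^2$; therefore $0$ does not lie in the closed convex hull of this set of positive elements, and Hahn--Banach separation (together with the Jordan decomposition of the separating functional) produces a \emph{state} $\rho$ with $\inf_{\eta \in W} \rho(\inn{\xi-\eta,\xi-\eta}) > 0$, i.e.\ $\pi_\rho(\xi) \notin \overline{\pi_\rho(W)}$. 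Without this convexity step your argument does not close, and with a fibrewise substitute it would be proving a statement that is false in the stated generality.
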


We now study the regularity of the unbounded operator $Dx : \sD(Dx) \to X$ with domain $\sD(Dx) := \{ \xi \in X \, | \, x \xi \in \sD(D) \}$. We remark that $Dx$ is already closed. The next to lemmas serve to compute the adjoint of $Dx$.
 
\begin{lemma}\label{l:incadj}
\[
Dx - \de(x) \su (Dx)^*
\]
\end{lemma}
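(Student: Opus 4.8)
The statement to be proved is that $Dx - \de(x) \su (Dx)^*$; since $\sD(Dx) \supseteq \sD(D)$ is dense, $(Dx)^*$ is defined, and since $\de(x)^* = -\de(x)$ and $x = x^*$, the assertion is equivalent to the single identity
\[
\inn{D(x\xi) - \de(x)\xi, \eta} = \inn{\xi, D(x\eta)} \qq \T{for all } \xi,\eta \in \sD(Dx),
\]
which I will call $(\star)$; note that $x\xi, x\eta \in \sD(D)$, so the right-hand side makes sense. My plan is to prove $(\star)$ first when $\eta \in \sD(D)$ and then to pass to general $\eta \in \sD(Dx)$ by regularising.

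\emph{The case $\eta \in \sD(D)$.} Here $x\eta \in \sD(D)$ and $D(x\eta) = xD\eta + \de(x)\eta$, while $x\xi \in \sD(D)$; using selfadjointness of $D$, selfadjointness of $x$, and $\de(x)^* = -\de(x)$ one computes directly
\[
\inn{D(x\xi),\eta} = \inn{x\xi, D\eta} = \inn{\xi, xD\eta} = \inn{\xi, D(x\eta)} - \inn{\xi, \de(x)\eta} = \inn{\xi, D(x\eta)} + \inn{\de(x)\xi,\eta},
\]
which is precisely $(\star)$ in this case.

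\emph{The general case.} Fix $\xi,\eta \in \sD(Dx)$ and set $F_k := (1 + D^2/k^2)^{-1}$ and $R_k^\pm := (k \pm iD)^{-1}$; by regularity and selfadjointness of $D$ these are bounded adjointable, $F_k \to 1$ strongly, $F_k$ maps $X$ into $\sD(D^2)$, $DF_k$ is a bounded selfadjoint function of $D$ with $k^{-1}DF_k \to 0$ strongly, and $DR_k^\pm$ are bounded with $DR_k^\pm \to 0$ strongly and $kR_k^\pm \to 1$ strongly. Put $\eta_k := F_k\eta \in \sD(D^2) \su \sD(Dx)$ (using that $x$ preserves $\sD(D)$). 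By the previous case, $(\star)$ holds with $\eta$ replaced by $\eta_k$; its left-hand side converges to $\inn{D(x\xi) - \de(x)\xi, \eta}$ since $\eta_k \to \eta$, so it remains to show $\inn{\xi, D(x\eta_k)} \to \inn{\xi, D(x\eta)}$. Writing $x\eta_k = F_k(x\eta) + [x,F_k]\eta$ gives $D(x\eta_k) = F_k D(x\eta) + D\big([x,F_k]\eta\big)$, and the first summand pairs with $\xi$ to give $\inn{F_k\xi, D(x\eta)} \to \inn{\xi, D(x\eta)}$. For the commutator term I would use the resolvent identities $[x,R_k^+] = iR_k^+\de(x)R_k^+$ and $[x,R_k^-] = -iR_k^-\de(x)R_k^-$ (which hold because $R_k^\pm$ maps $X$ into $\sD(D)$, $x$ preserves $\sD(D)$, and $[D,x]$ restricts to $\de(x)$ there), together with $R_k^+R_k^- = k^{-2}F_k$, to get $[x,F_k] = iR_k^+\de(x)F_k - iF_k\de(x)R_k^-$ and hence $D\big([x,F_k]\eta\big) = i(DR_k^+)\de(x)F_k\eta - i(DF_k)\de(x)R_k^-\eta$. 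Pairing with $\xi$ and transferring the $D$-factors onto $\xi$: $\inn{\xi, (DR_k^+)\de(x)F_k\eta} = \inn{(DR_k^-)\xi, \de(x)F_k\eta} \to 0$ because $DR_k^- \to 0$ strongly and $\de(x)F_k\eta$ is norm-bounded; and $\inn{\xi, (DF_k)\de(x)R_k^-\eta} = \inn{(k^{-1}DF_k)\xi, \de(x)(kR_k^-\eta)} \to 0$ because $k^{-1}DF_k \to 0$ strongly and $kR_k^-\eta$ is norm-bounded. Thus $\inn{\xi, D(x\eta_k)} \to \inn{\xi, D(x\eta)}$, and passing to the limit in $(\star)$ for $\eta_k$ yields $(\star)$, i.e. the lemma.

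The main obstacle is the final step: the operator $D\big([x,F_k]\eta\big)$ does \emph{not} converge to $0$ in norm — only $k^{-1}$ times the relevant factors is uniformly bounded — so one cannot simply pass to a limit in norm. The remedy is to keep $\xi$ fixed, move the unbounded factor of $D$ onto $\xi$ using the appropriate adjoint (here $(DR_k^+)^* = DR_k^-$ and $DF_k = (DF_k)^*$), and exploit \emph{strong} rather than norm convergence of these functions of $D$, combined with the rescalings $R_k^-\eta = k^{-1}(kR_k^-\eta)$ and $k^{-1}DF_k \to 0$.
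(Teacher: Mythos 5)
Your proof is correct and follows essentially the same strategy as the paper's: regularize $\eta$ by a bounded resolvent-type function of $D$ (you use $(1+D^2/k^2)^{-1}$ where the paper inserts $i(i+D/n)^{-1}$ directly into the inner product), commute $x$ past it at the cost of a $\de(x)$-commutator via the resolvent identity, and kill the error term by transferring the unbounded factor of $D$ onto the fixed vector and invoking strong convergence. The extra preliminary step of first verifying the identity for $\eta\in\sD(D)$ is a harmless repackaging of the same computation.
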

\begin{proof}
Let $\xi, \eta \in \sD(Dx)$. We then have that
\[
\begin{split}
\inn{Dx \xi,\eta} 
& = \lim_{n \to \infty} \inn{Dx \xi, i(i + D/n)^{-1} \eta} 
= \lim_{n \to \infty} \inn{\xi, i x D(i+ D/n)^{-1} \eta} \\
& = - \inn{\xi, \de(x) \eta} + \lim_{n \to \infty} \inn{\xi,i Dx(i + D/n)^{-1}\eta} \\
& = - \inn{\xi, \de(x) \eta} + \inn{\xi,Dx \eta}
+ \lim_{n \to \infty} \inn{\xi,i D/n \cd (i + D/n)^{-1} \de(x) (i + D/n)^{-1} \eta}
\end{split}
\]
It therefore suffices to show that
\[
i D/n \cd (i + D/n)^{-1} \de(x) (i + D/n)^{-1} \eta \to 0
\]
But this follows easily since
\[
\begin{split}
& i D/n \cd (i + D/n)^{-1} \de(x) (i + D/n)^{-1} \eta \\
& \q = \de(x) i (i + D/n)^{-1}\eta + (i + D/n)^{-1} \de(x) (i + D/n)^{-1}\eta
\end{split}
\]
\end{proof}

In order to prove the other inclusion $(Dx)^* \su Dx - \de(x)$, we remark that the adjoint of $x D : \sD(D) \to X$ is precisely the unbounded operator $Dx$. This follows from the selfadjointness of $D : \sD(D) \to X$ and $x \in \sL(X)$.

\begin{lemma}\label{l:adjinc}
\[
(Dx)^* \su Dx - \de(x)
\]
\end{lemma}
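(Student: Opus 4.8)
The goal is the inclusion $(Dx)^* \su Dx - \de(x)$, which combined with Lemma \ref{l:incadj} identifies the adjoint $(Dx)^* = Dx - \de(x)$. As noted just before the statement, the operator $xD : \sD(D) \to X$ has adjoint $(xD)^* = Dx$ (this uses selfadjointness of $D$ and $x = x^* \in \sL(X)$). Taking adjoints once more, the closure $\T{cl}(xD)$ satisfies $\T{cl}(xD) = (xD)^{**} = (Dx)^*$. So the problem is to show that $\T{cl}(xD) \su Dx - \de(x)$, i.e. every element in the graph-closure of $xD$ actually lies in $\sD(Dx)$ and is computed by $Dx - \de(x)$ there.

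First I would take $\xi \in \sD\big((Dx)^*\big)$, so there is a sequence $\xi_k \in \sD(D)$ with $\xi_k \to \xi$ and $x D \xi_k \to (Dx)^* \xi =: \eta$ in $X$. The natural move is to rewrite $x D \xi_k = D x \xi_k - \de(x) \xi_k$, so that $D x \xi_k = x D \xi_k + \de(x)\xi_k \to \eta + \de(x)\xi$. Now $x \xi_k \to x\xi$ and $D(x\xi_k)$ converges; since $D$ is closed, this gives $x\xi \in \sD(D)$ and $D x \xi = \eta + \de(x)\xi$, equivalently $\xi \in \sD(Dx)$ with $(Dx - \de(x))\xi = \eta = (Dx)^*\xi$. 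This is the whole argument, and it is short — the key input is simply that $\de(x)$ is a \emph{bounded} operator (so $\de(x)\xi_k \to \de(x)\xi$) together with closedness of $D$.

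The one point that needs a little care — and is the main (though modest) obstacle — is whether, for $\xi \in \sD((Dx)^*)$, one can actually find such an approximating sequence $\xi_k \in \sD(D)$ rather than merely $\xi_k \in \sD(xD) = \sD(D)$ with $x\xi_k$ controlled; here $\sD(xD) = \sD(D)$ already since $x$ is everywhere defined and bounded, so this is automatic. A cleaner way to package it that avoids even mentioning sequences: use that $(Dx)^* = \T{cl}(xD)$, and show directly that the graph of $Dx - \de(x)$ is closed and contains $xD$. Closedness of $Dx - \de(x)$ follows because $Dx$ is closed (as remarked in the text) and $\de(x)$ is bounded, so a bounded perturbation of a closed operator is closed; the inclusion $xD \su Dx - \de(x)$ is the pointwise identity $xD\xi = Dx\xi - \de(x)\xi$ valid for all $\xi \in \sD(D)$ (and $x\xi \in \sD(D)$ by hypothesis, so $\xi \in \sD(Dx)$). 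Hence $\T{cl}(xD) \su Dx - \de(x)$, which is exactly the claim. Either formulation works; I would present the second since it sidesteps convergence bookkeeping, but the sequence version is equally elementary and perhaps more transparent.

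Combining Lemma \ref{l:incadj} and this lemma then yields $(Dx)^* = Dx - \de(x)$, and in particular $(Dx)^*$ has the explicit form needed for the subsequent analysis of $\T{cl}(xDx)$ via the local-global principle.
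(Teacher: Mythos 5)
There is a genuine gap, and it sits at the very first step of both of your formulations: you assert that $(Dx)^* = (xD)^{**} = \T{cl}(xD)$, i.e.\ that every $\xi \in \sD\big((Dx)^*\big)$ admits a sequence $\xi_k \in \sD(D)$ with $\xi_k \to \xi$ and $xD\xi_k \to (Dx)^*\xi$. The identity $T^{**} = \T{cl}(T)$ is a Hilbert \emph{space} fact; for unbounded operators on Hilbert $C^*$-modules one only gets $\T{cl}(T) \su T^{**}$ (the graph of $T^{**}$ is the bi-orthogonal complement $G(T)^{\perp\perp}$, which may strictly contain the closure of $G(T)$), and the reverse inclusion is essentially a regularity statement. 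This is precisely the pathology the whole section is organized around: the text explicitly warns that $(xD)^* = Dx$ does \emph{not} by itself yield the regularity of $\T{cl}(xD)$, cites Pal's example of a closed non-regular operator with regular adjoint, and flags \cite[Corollary 9.6]{Lan:HCM} as incorrect. Moreover, the fact that $\T{cl}(xD)$ equals $Dx - \de(x)$ is exactly Proposition \ref{p:lefcomreg}, which is proved \emph{after} this lemma via the local-global principle and uses it as input; so your reduction is either unjustified or circular. Your second packaging proves the true statement $\T{cl}(xD) \su Dx - \de(x)$, but without $(Dx)^* \su \T{cl}(xD)$ this says nothing about $(Dx)^*$.

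The repair is to work entirely on the adjoint side, where inclusions behave well. From the hypotheses, for every $\xi \in \sD(D)$ one has $x\xi \in \sD(D)$ and $Dx\xi = xD\xi + \de(x)\xi$, i.e.\ $xD + \de(x) \su Dx$. Taking adjoints reverses inclusions (this holds verbatim for Hilbert $C^*$-modules, straight from the definition of the adjoint), so $(Dx)^* \su \big(xD + \de(x)\big)^*$. Since $\de(x)$ is a \emph{bounded adjointable} perturbation, $\big(xD + \de(x)\big)^* = (xD)^* + \de(x)^* = Dx - \de(x)$, using $(xD)^* = Dx$ (valid by selfadjointness of $D$ and $x$, as noted before the statement) and $\de(x)^* = -\de(x)$. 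This two-line argument is the paper's proof; it never needs to know that elements of $\sD\big((Dx)^*\big)$ are graph-approximable from $\sD(D)$.
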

\begin{proof}
Notice that $x D + \de(x) \su Dx$. But this implies that $(Dx)^* \su (x D + \de(x))^* = Dx - \de(x)$.
\end{proof}

We want to apply the local global principle for regular operators to show that $Dx : \sD(Dx) \to X$ is regular. Thus, we need to compute the localization $\T{cl}\big( (Dx)_\rho \big)$ and its adjoint $\big( (Dx)_\rho \big)^*$ for an arbitrary state $\rho : A \to \cc$. This is the content of the next lemma.

To ease the notation, let $y \ot 1 \in \sL(X_\rho)$ denote the closure of $y_\rho$ for a bounded adjointable operator $y : X \to X$.

\begin{lemma}\label{l:loccloadj}
Let $\rho : A \to \cc$ be a state. Then we have the identities
\[
\T{cl}\big( (Dx)_\rho \big) = \T{cl}( D_\rho) (x \ot 1) 
\q \T{and} \q \big( (Dx)_\rho \big)^* = \T{cl}(D_\rho)(x \ot 1) - \de(x) \ot 1 
\]
\end{lemma}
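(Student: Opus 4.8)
The plan is to establish the two identities in turn, using Lemma \ref{l:incadj}, Lemma \ref{l:adjinc}, and the localization machinery. First I would compute $\T{cl}\big( (Dx)_\rho \big)$. Since $Dx$ is already closed (as noted in the text) with $\sD(Dx) = \{\xi \in X \mid x\xi \in \sD(D)\}$, the localization $(Dx)_\rho$ has domain the image of $\sD(Dx)$ in $X_\rho$ and acts by $[\xi] \mapsto [Dx\xi] = [D(x\xi)]$. The inclusion $\T{cl}\big((Dx)_\rho\big) \su \T{cl}(D_\rho)(x\ot 1)$ is essentially immediate: for $\xi \in \sD(Dx)$ we have $x\xi \in \sD(D)$, so $[x\xi] \in \sD(D_\rho) \su \sD(\T{cl}(D_\rho))$, and $(x\ot 1)[\xi] = [x\xi]$ because $x\ot 1$ is the closure of $x_\rho$; then $\T{cl}(D_\rho)(x\ot 1)[\xi] = [D(x\xi)] = [Dx\xi]$. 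Taking closures preserves this inclusion. For the reverse inclusion I would note that $\T{cl}(D_\rho)(x\ot 1)$ is a closed operator (composition of the bounded $x\ot 1$ with the closed $\T{cl}(D_\rho)$), so it suffices to show $D_\rho(x\ot 1)$, restricted to an appropriate core, is contained in the closure of $(Dx)_\rho$. The natural move is to approximate: given $[\xi]$ with $(x\ot 1)[\xi] \in \sD(D_\rho)$, i.e. $(x\ot 1)[\xi] = [\eta]$ with $\eta \in \sD(D)$, one uses bounded transforms $i(i + D/n)^{-1}$ or resolvents to produce elements of $\sD(Dx)$ whose images converge to $[\xi]$ and whose $(Dx)_\rho$-images converge appropriately — this is where one must be slightly careful, but it parallels standard arguments.

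Next I would compute $\big( (Dx)_\rho \big)^*$. The cleanest route uses Lemma \ref{l:adjinc} together with the remark preceding it, namely that $(xD)^* = Dx$ where $xD$ has domain $\sD(D)$. Localizing: $(xD)_\rho$ has domain (the image of) $\sD(D)$ and equals $(x\ot 1)\T{cl}(D_\rho)$ on $\sD(D_\rho)$. Since $x\ot 1$ is bounded adjointable with adjoint $x\ot 1$ (as $x$ is selfadjoint), one gets $\big((xD)_\rho\big)^* \supseteq \T{cl}(D_\rho)(x\ot 1) - \de(x)\ot 1$ by the same computation as in Lemma \ref{l:incadj} carried out in the Hilbert space $X_\rho$ — indeed the identity $Dx + \de(x) \su (xD)^*$ (the localized form of the inclusion in Lemma \ref{l:adjinc}) passes to localizations. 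For the reverse inclusion of the adjoint, I would invoke the localized version of Lemma \ref{l:incadj}: applying that lemma (with $X$ replaced by $X_\rho$, $D$ by $\T{cl}(D_\rho)$, $x$ by $x\ot 1$, $\de(x)$ by $\de(x)\ot 1$) gives $\T{cl}(D_\rho)(x\ot 1) - \de(x)\ot 1 \su \big(\T{cl}(D_\rho)(x\ot 1)\big)^* = \big(\T{cl}((Dx)_\rho)\big)^* = \big((Dx)_\rho\big)^*$, using the first identity already proved. Combining the two inclusions gives equality.

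The main obstacle I anticipate is verifying that Lemma \ref{l:incadj} and Lemma \ref{l:adjinc} genuinely localize — that is, checking that $\T{cl}(D_\rho)$ together with the bounded selfadjoint operator $x \ot 1 \in \sL(X_\rho)$ satisfies the hypotheses of those lemmas in the Hilbert space $X_\rho$. Concretely one needs: $(x\ot 1)\sD(\T{cl}(D_\rho)) \su \sD(\T{cl}(D_\rho))$ and that $[\T{cl}(D_\rho), x\ot 1]$ extends boundedly to $\de(x)\ot 1$. This should follow because the resolvents $(i + D/n)^{-1}$ localize to $(i + \T{cl}(D_\rho)/n)^{-1}$ and the bounded operator identities relating $x$, $D$, $\de(x)$ and these resolvents (which underlie the proof of Lemma \ref{l:incadj}) are preserved under the localization $*$-homomorphism on bounded operators. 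Once this compatibility is in place, the rest is bookkeeping: the two identities fall out by applying the already-established lemmas verbatim in $X_\rho$, plus the observation $(x \ot 1)^* = x\ot 1$. I would also double-check the edge point that $\T{cl}(D_\rho)(x\ot 1)$ has densely defined adjoint, which is needed to even speak of $\big((Dx)_\rho\big)^*$ — but this is clear since its adjoint contains $\T{cl}(D_\rho)(x\ot 1) - \de(x)\ot 1$, which is densely defined (its domain contains $\sD(D_\rho)$).
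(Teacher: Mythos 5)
Your plan assembles the right ingredients (the easy inclusion $(Dx)_\rho \su \T{cl}(D_\rho)(x\ot 1)$, the localized versions of Lemmas \ref{l:incadj} and \ref{l:adjinc}, selfadjointness of $x \ot 1$), but the logical order is reversed relative to what actually works, and this leaves a genuine gap. You propose to prove the closure identity first, and your argument for the hard direction $\T{cl}(D_\rho)(x\ot 1) \su \T{cl}\big((Dx)_\rho\big)$ is only a sketch: you would need the image of $\sD(Dx)$ in $X_\rho$ to be a core for $\T{cl}(D_\rho)(x\ot 1)$, and the approximation you gesture at does not go through as described. Even for $[\xi]$ with $(x\ot 1)[\xi] = [\eta]$, $\eta \in \sD(D)$, the element $\eta$ is not of the form $x\xi'$, and $[x\xi] \in \sD(D_\rho)$ does not give $x\xi \in \sD(D)$ (it only gives $x\xi - \eta \in N_\rho$); moreover the resolvents $(i + D/n)^{-1}$ do not commute with $x$, so they do not manufacture elements of $\sD(Dx)$ from elements of $\sD(\T{cl}(D_\rho)(x\ot 1))$. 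Since both operators are closed and $X_\rho$ is a Hilbert space, equality of the closures is in fact \emph{equivalent} to equality of the adjoints, so any honest proof of this inclusion must pass through the adjoint computation anyway. Your subsequent derivation of the adjoint identity then cites ``the first identity already proved,'' which makes the argument circular as written.

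The paper's proof inverts the order. It establishes the adjoint identity first, using only two elementary unlocalized inclusions: from $(Dx)_\rho \su \T{cl}(D_\rho)(x\ot 1)$ and the fact (Lemmas \ref{l:incadj}, \ref{l:adjinc} applied in $X_\rho$ to the pair $\T{cl}(D_\rho)$, $x\ot 1$) that $\big(\T{cl}(D_\rho)(x\ot 1)\big)^* = \T{cl}(D_\rho)(x\ot 1) - \de(x)\ot 1$, one gets $\T{cl}(D_\rho)(x\ot 1) - \de(x)\ot 1 \su \big((Dx)_\rho\big)^*$; from $xD + \de(x) \su Dx$, hence $(x\ot 1)D_\rho + \de(x)\ot 1 \su (Dx)_\rho$, one gets the reverse inclusion by taking adjoints of a ``bounded plus ($x\ot 1$ times closed)'' operator. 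The closure identity is then a one-line consequence: $\T{cl}\big((Dx)_\rho\big) = \big((Dx)_\rho\big)^{**} = \T{cl}(D_\rho)(x\ot 1)$, valid because $X_\rho$ is a Hilbert space. Your write-up is repairable with a modest reordering: prove the adjoint formula first, replacing your appeal to the closure identity by the inclusion $\big(\T{cl}(D_\rho)(x\ot 1)\big)^* \su \big((Dx)_\rho\big)^*$ which follows from the easy containment alone, and then obtain the closure identity by the double adjoint rather than by direct approximation. Your observation that one must check the hypotheses of Lemmas \ref{l:incadj} and \ref{l:adjinc} for the localized pair is correct and is indeed implicitly used in the paper.
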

\begin{proof}
Remark first that $(Dx)_\rho \su \T{cl}(D_\rho)(x \ot 1)$. This implies the inclusion
\[
\T{cl}\big( (Dx)_\rho \big) \su \T{cl}(D_\rho)(x \ot 1)
\]
Furthermore, since $\big( \T{cl}(D_\rho)(x \ot 1) \big)^* = \T{cl}(D_\rho)(x \ot 1) - \de(x) \ot 1$ by Lemma \ref{l:incadj} and Lemma \ref{l:adjinc}, we get that
\[
\T{cl}(D_\rho)(x \ot 1) - \de(x) \ot 1 \su \big( (Dx)_\rho \big)^*  
\]

To prove the reverse inclusions, note that $x D + \de(x) \su Dx$. This implies that $(x \ot 1) D_{\rho} + \de(x) \ot 1 \su (D x)_\rho$. We may then deduce that
\[
\big( (Dx)_\rho \big)^* \su \big( (x \ot 1) D_\rho + \de(x) \ot 1 \big)^* = \T{cl}(D_\rho)(x \ot 1) - \de(x) \ot 1
\]

We have thus proved the identity
\[
\big( (Dx)_\rho \big)^*  = \T{cl}(D_\rho)(x \ot 1) - \de(x) \ot 1
\]
But it then follows, since $X_\rho$ is a Hilbert space, that
\[
\T{cl}\big( (Dx)_\rho \big) = \big( (Dx)_\rho \big)^{**} = \T{cl}(D_\rho)(x \ot 1)
\]

This proves the lemma.
\end{proof}

We are now ready to prove the first main result of this section:

\begin{prop}\label{p:rigcomreg}
The closed unbounded operator $Dx : \sD(Dx) \to X$ is regular and the adjoint is given by $(Dx)^* = Dx - \de(x) : \sD(Dx) \to X$.
\end{prop}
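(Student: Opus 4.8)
The plan is to read off the adjoint formula from the two preceding lemmas and then feed everything into the local-global principle, Theorem \ref{t:locglopri}.

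First I would settle the adjoint identity. Lemma \ref{l:incadj} gives $Dx - \de(x) \su (Dx)^*$ and Lemma \ref{l:adjinc} gives the reverse inclusion, so $(Dx)^* = Dx - \de(x)$ as unbounded operators with common domain $\sD(Dx)$. In particular the adjoint of $Dx$ is densely defined: by the standing assumption that $x$ preserves $\sD(D)$ we have $\sD(D) \su \sD(Dx)$, and $\sD(D)$ is dense in $X$. Together with the remark that $Dx$ is already closed (if $\xi_n \to \xi$ and $Dx\,\xi_n$ converges, then $x\xi_n \to x\xi$ and closedness of $D$ forces $x\xi \in \sD(D)$ with $D(x\xi) = \lim Dx\,\xi_n$), this shows that $Dx$ is a legitimate input for Theorem \ref{t:locglopri}.

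Next I would verify the criterion of the local-global principle: for every state $\rho : A \to \cc$ one must check that $\big( (Dx)_\rho \big)^* = \T{cl}\big( ((Dx)^*)_\rho \big)$. The left-hand side is computed in Lemma \ref{l:loccloadj} to be $\T{cl}(D_\rho)(x \ot 1) - \de(x) \ot 1$. For the right-hand side, combine the identity $(Dx)^* = Dx - \de(x)$ from the first step with the fact that $\de(x)$ is bounded and adjointable: on the image of $\sD(Dx)$ in $X_\rho$ one has $((Dx)^*)_\rho = (Dx)_\rho - \de(x)_\rho$, and since $\de(x)_\rho$ extends to the everywhere-defined bounded operator $\de(x) \ot 1$, taking closures yields $\T{cl}\big( ((Dx)^*)_\rho \big) = \T{cl}\big( (Dx)_\rho \big) - \de(x) \ot 1$. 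The second clause of Lemma \ref{l:loccloadj} identifies $\T{cl}\big( (Dx)_\rho \big)$ with $\T{cl}(D_\rho)(x \ot 1)$, so both sides equal $\T{cl}(D_\rho)(x \ot 1) - \de(x) \ot 1$. Hence the criterion holds for all $\rho$, and Theorem \ref{t:locglopri} gives that $Dx$ is regular; the adjoint formula was already recorded.

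There is no genuine analytic obstacle: the substance is contained in Lemmas \ref{l:incadj}, \ref{l:adjinc}, and \ref{l:loccloadj}, and what remains is the elementary bookkeeping that localization at $\rho$, perturbation by the bounded operator $\de(x)$, and passage to closures may be interchanged. The only point worth stating carefully is that the image of $\sD(Dx)$ in $X_\rho$ serves as a common core for $(Dx)_\rho$ and $((Dx)^*)_\rho$, which makes this interchange immediate; with that in hand the conclusion is a one-line appeal to the local-global principle.
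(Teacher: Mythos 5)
Your proposal is correct and follows the paper's own argument essentially verbatim: the adjoint formula from Lemmas \ref{l:incadj} and \ref{l:adjinc}, then verification of the local-global criterion by computing both sides via Lemma \ref{l:loccloadj}, with the only extra content being your (harmless and valid) explicit justification that closure commutes with the bounded perturbation by $\de(x)$. The sole slip is cosmetic: the identity $\T{cl}\big((Dx)_\rho\big) = \T{cl}(D_\rho)(x \ot 1)$ is the \emph{first} clause of Lemma \ref{l:loccloadj}, not the second.
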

\begin{proof}
The formula for the adjoint $(Dx)^*$ is a consequence of Lemma \ref{l:incadj} and Lemma \ref{l:adjinc}.

Let now $\rho : A \to \cc$ be a state. By Theorem \ref{t:locglopri} we need only show that
\begin{equation}\label{eq:locrigcom}
\big( (Dx)_\rho \big)^* = \T{cl}\big( \big( (Dx)^*\big)_\rho \big)
\end{equation}

Applying Lemma \ref{l:loccloadj} we obtain that
\[
\big( (Dx)_\rho \big)^* = \T{cl}(D_\rho)(x \ot 1) - \de(x) \ot 1
\]

By another application of Lemma \ref{l:loccloadj} we get that
\[
\T{cl}\big( \big( (Dx)^* \big)_\rho \big) = \T{cl}\big( (Dx)_\rho - \de(x)_\rho \big) = \T{cl}(D_\rho)(x \ot 1) - \de(x) \ot 1
\]

This proves the identity in \eqref{eq:locrigcom} and thereby also the result of the proposition.
\end{proof}

We may now treat the regularity problem for the composition $x D : \sD(D) \to X$. This is carried out in the next proposition. We recall that $(x D)^* = Dx : \sD(Dx) \to X$. This does however not imply the regularity of $\T{cl}(x D)$. Indeed, it is possible to construct a closed unbounded, \emph{non-regular} operator $\dir : \sD(\dir) \to X$ with a \emph{regular} adjoint $\dir^* : \sD(\dir^*) \to X$, see \cite[Proposition 2.3]{Pal:ROH} and \cite[Proposition 6.3]{KaLe:LGR}. Thus, the result in \cite[Corollary 9.6]{Lan:HCM} is incorrect. None-the-less we have the following:

\begin{prop}\label{p:lefcomreg}
The closure $\T{cl}(x D)$ is regular and given by $\T{cl}(xD) = D x - \de(x) : \sD(Dx) \to X$. 
\end{prop}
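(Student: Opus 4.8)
The plan is to reduce everything to Proposition \ref{p:rigcomreg} and the local-global principle. First I would record the identity $\T{cl}(xD) = Dx - \de(x)$. Since $D$ is selfadjoint and $x \in \sL(X)$, we have $(xD)^* = Dx$, as recalled above; in particular $xD$ is densely defined and closable, and $\T{cl}(xD) = (xD)^{**} = (Dx)^*$, which equals $Dx - \de(x)$ by Proposition \ref{p:rigcomreg}. This simultaneously gives that $\dir := Dx - \de(x)$ is closed (a bounded adjointable perturbation of the closed operator $Dx$) and that its adjoint $\dir^* = (Dx - \de(x))^* = Dx$ is densely defined, so the local-global principle (Theorem \ref{t:locglopri}) is applicable to $\dir$.

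Next I would fix an arbitrary state $\rho : A \to \cc$ and verify the criterion $(\dir_\rho)^* = \T{cl}\big( (\dir^*)_\rho \big)$. For the right-hand side, $\dir^* = Dx$, so $\T{cl}\big( (\dir^*)_\rho \big) = \T{cl}\big( (Dx)_\rho \big) = \T{cl}(D_\rho)(x \ot 1)$ by Lemma \ref{l:loccloadj}. For the left-hand side, on the domain of $(Dx)_\rho$ one has $\dir_\rho = (Dx)_\rho - \de(x)\ot 1$; taking adjoints of this bounded adjointable perturbation and using $\de(x)^* = -\de(x)$ gives $(\dir_\rho)^* = \big( (Dx)_\rho \big)^* - (\de(x)\ot 1)^* = \big( (Dx)_\rho \big)^* + \de(x)\ot 1$, which by the formula for $\big( (Dx)_\rho \big)^*$ in Lemma \ref{l:loccloadj} collapses to $\T{cl}(D_\rho)(x \ot 1)$. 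The two sides agree for every state $\rho$, so Theorem \ref{t:locglopri} yields that $\dir = \T{cl}(xD)$ is regular, completing the proof.

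The only point requiring care is that the localization and the passage to adjoints interact correctly with the bounded adjointable term $\de(x)$ — specifically that $(\dir_\rho)^*$ equals $\big( (Dx)_\rho \big)^* - (\de(x)\ot 1)^*$ rather than merely containing it, which is the standard fact that adding a bounded adjointable operator to a densely defined operator adds its adjoint to the adjoint. Everything else is bookkeeping on top of Lemma \ref{l:loccloadj} and Proposition \ref{p:rigcomreg}. An essentially equivalent alternative would be to invoke directly the stability of regularity under bounded adjointable perturbations applied to the regular operator $Dx$, but routing the argument through the local-global principle keeps it self-contained with the tools already developed in this section.
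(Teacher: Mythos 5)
There is a genuine gap, and it sits in your very first step. The identity $\T{cl}(xD) = (xD)^{**}$ is a Hilbert \emph{space} fact; for a densely defined closable operator $T$ on a Hilbert $C^*$-module one only has $G(T^{**}) = G(T)^{\perp\perp} \supseteq \T{cl}(G(T))$, and the inclusion can be strict because biorthogonal complements of closed submodules need not equal the submodules themselves. The Pal example cited in the paragraph immediately preceding this proposition is precisely a closed operator $\dir$ with regular adjoint but $\dir \subsetneq \dir^{**}$ (were they equal, $\dir = (\dir^*)^*$ would be regular). So you may not write $\T{cl}(xD) = (xD)^{**} = (Dx)^* = Dx - \de(x)$ before regularity of $\T{cl}(xD)$ is established; at that stage you only know $\T{cl}(xD) \su Dx - \de(x)$. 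As a consequence, your local-global verification — which is correct as a computation — proves regularity of the operator $Dx - \de(x)$ (something already immediate from Proposition \ref{p:rigcomreg} and bounded adjointable perturbation, as you note), but not regularity of $\T{cl}(xD)$, because the identification of the two operators is exactly what is missing. The same objection applies to your proposed alternative route.

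The fix is to reverse the order of the two halves, which is what the paper does: apply Theorem \ref{t:locglopri} directly to $\T{cl}(xD)$. For each state $\rho$ the localization $X_\rho$ \emph{is} a Hilbert space, where closure does equal double adjoint, and $\big(\T{cl}(xD)\big)_\rho$ and $(x \ot 1)\T{cl}(D_\rho)$ have the same closure (both contain $(x\ot 1)D_\rho$ densely in graph norm), whence $\big( (\T{cl}(xD))_\rho \big)^* = \T{cl}(D_\rho)(x \ot 1)$; comparing with $\T{cl}\big( ((xD)^*)_\rho \big) = \T{cl}\big( (Dx)_\rho \big) = \T{cl}(D_\rho)(x\ot 1)$ from Lemma \ref{l:loccloadj} gives regularity of $\T{cl}(xD)$. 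Only \emph{then} does \cite[Corollary 9.4]{Lan:HCM} license $\T{cl}(xD) = (xD)^{**} = (Dx)^* = Dx - \de(x)$, since closure equals double adjoint for regular operators. In short: your computation is sound, but it is attached to the wrong operator, and the bridge you build to $\T{cl}(xD)$ uses exactly the false principle this section of the paper is at pains to warn against.
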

\begin{proof}
Let $\rho : A \to \cc$ be a state. By the local-global principle in Theorem \ref{t:locglopri}, the regularity of $\T{cl}(xD)$ will follow from the identity
\begin{equation}\label{eq:leflocadj}
\big( \big( \T{cl}(xD) \big)_\rho \big)^* = \T{cl}\big( \big( (xD)^* \big)_\rho \big)
\end{equation}

The left hand side of \eqref{eq:leflocadj} can be rewritten as
\[
\big( \big( \T{cl}(xD) \big)_\rho \big)^* = \big( (x \ot 1) \T{cl}(D_\rho) \big)^* = \T{cl}(D_\rho) (x \ot 1)
\]
where the first identity follows since $\big( \T{cl}(xD) \big)_{\rho}$ and $(x \ot 1) \T{cl}(D_\rho)$ agrees on the subspace $\sD(D_\rho) \su X_\rho$ and the second identity follows from the regularity and selfadjointness of $D : \sD(D) \to X$.

The right hand side of \eqref{eq:leflocadj} can be computed using Lemma \ref{l:loccloadj}. We obtain that
\[
\T{cl}\big( \big( (xD)^* \big)_\rho \big) = \T{cl}\big( (Dx)_\rho \big) =  \T{cl}(D_\rho) (x \ot 1)
\]
This proves the identity in \eqref{eq:leflocadj} and thus that $\T{cl}(x D)$ is regular.

Now, since $\T{cl}(x D)$ is regular we have that $\T{cl}(x D) = (x D)^{**} = (D x)^* = Dx - \de(x)$, see \cite[Corollary 9.4]{Lan:HCM}. This proves the last part of the proposition.
\end{proof}

We conclude this section by showing that $x D x : \sD(Dx) \to X$ is essentially selfadjoint and regular, thus the closure $\T{cl}(x D x)$ is selfadjoint and regular.

\begin{prop}\label{p:essregcom}
The closure $\T{cl}(x D x)$ is selfadjoint and regular and given by $\T{cl}(x D x) = D x^2 - \de(x) x : \sD(Dx^2) \to X$.
\end{prop}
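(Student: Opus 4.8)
The plan is to identify the closure of $x D x$ with the concrete operator $S := D x^2 - \de(x) x$ on the domain $\sD(D x^2)$, and to read off selfadjointness and regularity from this identification together with the one-sided results already proved in this section. The first point I would make is that $x^2 \in \sL(X)$ again satisfies the standing hypotheses: it is bounded and selfadjoint, it preserves $\sD(D)$, and $[D, x^2]$ extends to the bounded adjointable operator $\de(x^2) = \de(x) x + x \de(x)$. Consequently Proposition \ref{p:rigcomreg}, applied with $x$ replaced by $x^2$, shows that $D x^2 : \sD(D x^2) \to X$ is regular with $(D x^2)^* = D x^2 - \de(x^2)$, and Proposition \ref{p:lefcomreg} identifies $\T{cl}(x^2 D)$ with the same operator on $\sD(D x^2)$.

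Next I would record the basic properties of $S$. It is densely defined, since $\sD(D x^2)$ contains the dense subspace $\sD(D)$, and closed, since it differs from the closed operator $D x^2$ by the bounded adjointable summand $-\de(x) x$. It is selfadjoint: using $(D x^2)^* = D x^2 - \de(x) x - x \de(x)$ and $\de(x)^* = -\de(x)$ one computes $S^* = (D x^2)^* - (\de(x) x)^* = \big(D x^2 - \de(x) x - x \de(x)\big) + x \de(x) = S$. Finally it is regular, because regularity is stable under bounded adjointable perturbations of the regular operator $D x^2$; if one prefers not to quote this, it follows from the local-global principle (Theorem \ref{t:locglopri}), since in every fibre Hilbert space $X_\rho$ the addition of a bounded operator commutes with both closure and adjunction.

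The heart of the proof is the identity $(x D x)^* = S$. The easy half is $x D x \su S$, which follows from the commutator identity $x D x\, \xi = D(x^2 \xi) - \de(x) x \xi$ on $\sD(D x)$ — here $x\xi \in \sD(D)$ forces $x^2\xi \in \sD(D)$ — and since $S$ is closed this already gives $\T{cl}(x D x) \su S$. For $S \su (x D x)^*$ one uses that, for $\zeta \in \sD(D x^2)$, the element $x\zeta$ lies in $\sD(D x) = \sD\big((D x)^*\big)$, so Proposition \ref{p:rigcomreg} gives $(D x)^*(x \zeta) = D(x^2 \zeta) - \de(x) x \zeta = S \zeta$ and hence $\inn{x D x\, \xi, \zeta} = \inn{D x\, \xi, x \zeta} = \inn{\xi, S \zeta}$ for all $\xi \in \sD(D x)$. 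For the remaining inclusion $(x D x)^* \su S$ I would take $\zeta \in \sD\big((x D x)^*\big)$ and test it only against vectors $\xi \in \sD(D) \su \sD(D x)$; writing $x D x\, \xi = x^2 D \xi + x \de(x) \xi$ and using the selfadjointness of $D$, the pairing identity $\inn{x D x\, \xi, \zeta} = \inn{\xi, (x D x)^* \zeta}$ rearranges to $\inn{D \xi, x^2 \zeta} = \inn{\xi, (x D x)^* \zeta + \de(x) x \zeta}$ for all $\xi \in \sD(D)$, forcing $x^2 \zeta \in \sD(D^*) = \sD(D)$ with $D(x^2 \zeta) = (x D x)^* \zeta + \de(x) x \zeta$; therefore $\zeta \in \sD(D x^2)$ and $S \zeta = (x D x)^* \zeta$.

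With $(x D x)^* = S$ in hand the proposition follows formally: $x D x$ is closable, $\T{cl}(x D x) = (x D x)^{**} = S^* = S$, and $S$ is selfadjoint and regular with the asserted formula $\T{cl}(x D x) = D x^2 - \de(x) x$ on $\sD(D x^2)$. I expect the main obstacle to be the inclusion $(x D x)^* \su S$: the delicate point is that an element of $\sD\big((x D x)^*\big)$ is only known a priori to pair correctly with the full domain $\sD(D x)$, whereas one must extract membership of $x^2 \zeta$ in $\sD(D)$ from pairing with the smaller core $\sD(D)$ — and this is exactly where selfadjointness of $D$, not merely its regularity, is used. A secondary point requiring care is the stability of regularity under bounded adjointable perturbations, which is cleanly handled by localizing through Theorem \ref{t:locglopri}.
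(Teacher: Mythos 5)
Most of your argument is sound, and your computation of the adjoint is a genuinely more elementary route than the paper's: you verify directly that $(xDx)^* = S$, where $S := Dx^2 - \de(x)\,x$ on $\sD(Dx^2)$, using only the selfadjointness of $D$ and Proposition \ref{p:rigcomreg}, and you correctly establish that $S$ is closed, selfadjoint and regular. The gap is in the very last line, namely the identity $\T{cl}(xDx) = (xDx)^{**}$. In a Hilbert $C^*$-module the double adjoint of a closable operator can be a \emph{proper} extension of its closure; this is exactly the failure of \cite[Corollary 9.6]{Lan:HCM} that the paper points out just before Proposition \ref{p:lefcomreg} (by \cite[Proposition 2.3]{Pal:ROH} there is a closed non-regular operator $T$ whose adjoint is regular, hence $T \neq T^{**}$ even though $T^{**}$ is regular). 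What your computation actually yields is $\T{cl}(xDx) \su S$ together with $\big(\T{cl}(xDx)\big)^* = S$; without further input this leaves open the possibility that $\T{cl}(xDx)$ is a proper closed symmetric restriction of $S$ --- equivalently, that $\sD(Dx)$ fails to be a core for $S$ --- in which case $\T{cl}(xDx)$ would be neither selfadjoint nor regular. The identity $\T{cl}(T) = T^{**}$ only becomes available once one already knows that $\T{cl}(T)$ itself is regular (this is how \cite[Corollary 9.4]{Lan:HCM} is invoked at the end of the proof of Proposition \ref{p:lefcomreg}), so your argument is circular at precisely the point where regularity of the closure has to be produced.

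The paper closes this gap with a doubling trick that never passes through a double adjoint: the regularity of $Dx$ with $(Dx)^* = Dx - \de(x)$ is, by \cite[Lemma 2.3]{KaLe:LGR}, equivalent to the selfadjointness and regularity of the anti-diagonal operator on $X \op X$ with entries $Dx$ and $Dx - \de(x)$; applying Proposition \ref{p:lefcomreg} to this selfadjoint regular operator and to the bounded selfadjoint operator $\T{diag}(x,x)$ produces, in one stroke, both the regularity of the anti-diagonal operator built from $\T{cl}(xDx)$ and the identification of that closure with $Dx^2 - \de(x)\,x$ on $\sD(Dx^2)$ --- the point being that Proposition \ref{p:lefcomreg} identifies the closure of a product via the local-global principle rather than via $T^{**}$. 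To repair your proof you would either reproduce this matrix argument, or prove directly that $\sD(Dx)$ is a core for $S$, for instance by running the localization argument of Theorem \ref{t:locglopri} on $xDx$ itself rather than on its bounded perturbation target $S$.
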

\begin{proof}
By Proposition \ref{p:rigcomreg}, $Dx : \sD(Dx) \to X$ is regular with $(Dx)^* = Dx - \de(x) : \sD(Dx) \to X$. This fact is equivalent to the selfadjointness and regularity of the anti-diagonal unbounded operator
\[
\ma{cc}{0 & Dx - \de(x) \\ Dx & 0} : \sD(Dx) \op \sD(Dx) \to X \op X
\]
see \cite[Lemma 2.3]{KaLe:LGR}. It therefore follows by Proposition \ref{p:lefcomreg} that
\[
\ma{cc}{0 & \T{cl}(x D x) - x\de(x) \\ 
\T{cl}(x Dx) & 0} : \sD(\T{cl}(x D x)) \op \sD(\T{cl}(x D x)) \to X \op X
\]
is regular. Furthermore, we have that
\[
\ma{cc}{0 & \T{cl}(x D x) - x\de(x) \\ 
\T{cl}(x Dx) & 0} = 
\ma{cc}{0 & Dx^2 - \de(x) x \\ D x^2 & 0} - \ma{cc}{0 & x \de(x) \\ \de(x)x & 0}
\]

We may thus conclude that $\T{cl}(x D x) = D x^2 - \de(x) x : \sD(D x^2) \to X$. It then follows by Proposition \ref{p:rigcomreg} that $\T{cl}(x D x)$ is regular. Furthermore, the adjoint is given by
\[
(x D x)^* = (Dx^2)^* + x \de(x) = D x^2 - \de(x^2) + x \de(x) = D x^2 - \de(x) x
\]
This shows that $\T{cl}(x D x)$ is also selfadjoint and the proposition is proved.
\end{proof}

\section{Selfadjointness and regularity of lifts}\label{s:selreg}
\emph{We will now return to the setting described in the beginning of Section \ref{s:symsel}. Furthermore, we let $W : X \to H_A$ and $K : H_A \to H_A$ be as in Theorem \ref{t:difabs}, and as in Remark \ref{r:diffra} we let $\{\ze_k\}_{k = 1}^\infty$ be a sequence in $X$ such that $W(\eta) = \{\inn{\ze_k,\eta} \}_{k = 1}^\infty$ for all $\eta \in X$.}

We recall that $W^* K W : X \to X$ has dense image and it thus follows that
\[
\De := (W^* K W)^2 \ot 1 = (W^* K^2 W) \ot 1 : X \hot_A Y \to X \hot_A Y
\]
has dense image as well.

\emph{We are interested in proving that the composition
\[
\De (1 \ot_{\Na} D) \De : \sD\big( \T{diag}(D)(W \ot 1) \De \big) \to X \hot_A Y
\]
is an essentially selfadjoint and regular unbounded operator.}

We first notice that the map $\io : M_\infty(\sL(Y)) \to \sL(Y^\infty)$ given by
\[
\io( \{ T_{ij} \})(\{\eta_n\}) := \{ \sum_{j = 1}^\infty T_{ij}(\eta_j) \}_{i = 1}^\infty \q \{T_{ij}\} \in M_\infty(\sL(Y)) \, \, , \, \, \{\eta_n\} \in Y^\infty
\]
induces an injective $*$-homomorphism $\io : \sK\big( H_{\sL(Y)}\big) \to \sL(Y^\infty)$. In particular, we have that $\| \io(T) \| = \| T \|$ for all $T \in \sK(H_{\sL(Y)})$. This enables us to prove the following:

\begin{lemma}\label{l:comcombou}
Let $T \in \sK(H_A)_\de$. Then $\io( \rho(T)) \in \sL(Y^\infty)$ preserves the domain of $\T{diag}(D)$ and $\io( \de(T) ) \in \sL(Y^\infty)$ is an extension of the commutator
\[
\big[ \T{diag}(D), \io(\rho(T)) \big] : \sD(\T{diag}(D)) \to Y^\infty 
\]
\end{lemma}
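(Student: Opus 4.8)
The plan is to reduce the statement for a general $T \in \sK(H_A)_\de$ to the case of a finite matrix $\{a_{ij}\} \in M_\infty(\sA)$ by a density and closedness argument, and then verify the domain-preservation and commutator identity by an explicit finite computation.

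First I would treat the finite-matrix case. Let $T = \{a_{ij}\} \in M_\infty(\sA)$, so that only finitely many rows and columns are nonzero. For $\{\eta_n\} \in \sD(\T{diag}(D))$ we have $\io(\rho(T))(\{\eta_n\}) = \{\sum_j \rho(a_{ij}) \eta_j\}_i$, a finite sequence whose entries lie in $\sD(D)$ because $\rho(a_{ij})$ preserves $\sD(D)$ by hypothesis (1) in Section \ref{s:symsel}. Applying $\T{diag}(D)$ entrywise and using the Leibniz rule $D \rho(a_{ij}) \eta_j = \de(a_{ij}) \eta_j + \rho(a_{ij}) D \eta_j$ on each of the finitely many terms, one gets
\[
\T{diag}(D)\,\io(\rho(T))(\{\eta_n\}) = \io(\de(T))(\{\eta_n\}) + \io(\rho(T))\,\T{diag}(D)(\{\eta_n\}),
\]
which shows both that $\io(\rho(T))$ preserves $\sD(\T{diag}(D))$ and that on this domain $[\T{diag}(D),\io(\rho(T))] = \io(\de(T))$ as operators. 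Here one must note that $\de$ on $M_\infty(\sA)$ is defined entrywise, $\de(\{a_{ij}\}) = \{\de(a_{ij})\}$, and that $\io(\de(T))$ is genuinely a bounded adjointable operator on $Y^\infty$ since $\de(T) \in \sK(H_{\sL(Y)})$ via $\| \io(\de(T))\| = \|\de(T)\|$.

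Next I would pass to the limit. Given $T \in \sK(H_A)_\de$, choose a sequence $T_k \in M_\infty(\sA)$ with $\|T_k - T\| \to 0$ and $\|\de(T_k) - \de(T)\| \to 0$. Then $\io(\rho(T_k)) \to \io(\rho(T))$ and $\io(\de(T_k)) \to \io(\de(T))$ in operator norm on $\sL(Y^\infty)$, using that $\rho$ and $\io$ are norm-contractive (indeed isometric on the relevant subalgebras). For a fixed $\zeta \in \sD(\T{diag}(D))$, the vectors $\io(\rho(T_k))\zeta$ converge to $\io(\rho(T))\zeta$, and by the finite-matrix identity $\T{diag}(D)\io(\rho(T_k))\zeta = \io(\de(T_k))\zeta + \io(\rho(T_k))\T{diag}(D)\zeta$ converges to $\io(\de(T))\zeta + \io(\rho(T))\T{diag}(D)\zeta$. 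Since $\T{diag}(D)$ is closed (being selfadjoint and regular), it follows that $\io(\rho(T))\zeta \in \sD(\T{diag}(D))$ and $\T{diag}(D)\io(\rho(T))\zeta = \io(\de(T))\zeta + \io(\rho(T))\T{diag}(D)\zeta$. Rearranging gives that $\io(\de(T))$ restricted to $\sD(\T{diag}(D))$ equals $[\T{diag}(D),\io(\rho(T))]$, which is the assertion.

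The main obstacle is a bookkeeping one rather than a deep one: making sure that $\rho$ and $\de$ extend consistently from $\sA$ to $M_\infty(\sA)$ and then to $\sK(H_A)_\de$, and that the maps $\io$ and the entrywise-defined $\rho$, $\de$ really are norm-contractive so that the approximation argument goes through — in particular that $\de$ is \emph{closed} on $\sK(H_A)_\de$ and hence that convergence in $\|\cdot\|_\de$ is compatible with taking limits inside $\T{diag}(D)$. One should also double-check that $\io(\rho(T))$ genuinely lands in $\sL(Y^\infty)$ and is adjointable, which follows from $\rho(T) \in \sK(H_{\sL(Y)}) \subseteq \sK(H_{\sL(Y)})$ together with the injective $*$-homomorphism $\io : \sK(H_{\sL(Y)}) \to \sL(Y^\infty)$ introduced just before the lemma. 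Once these identifications are in place, the argument is exactly the closedness-plus-density scheme above, with the core content being the entrywise Leibniz computation in the finite case.
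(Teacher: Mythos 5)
Your proposal is correct and follows essentially the same route as the paper: an explicit entrywise Leibniz computation for $T \in M_\infty(\sA)$ using hypothesis (1) of Section \ref{s:symsel}, followed by approximation of a general $T \in \sK(H_A)_\de$ in the $\|\cdot\|_\de$-norm and an appeal to the closedness of $\T{diag}(D)$. The paper's proof is terser but the structure and all the key ingredients are identical.
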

\begin{proof}
Let $\eta = \{ \eta_n \} \in \sD(\T{diag}(D))$.

Suppose first that $T \in M_\infty(\sA)$. Then clearly $\io(\rho(T))(\eta) = \big\{ \sum_{j = 1}^\infty \rho(x_{ij}) \eta_j \} \in \sD(\T{diag}(D))$ and furthermore
\[
\big[ \T{diag}(D), \io (\rho(T)) \big](\eta) = \big\{ \sum_{j = 1}^\infty [D,\rho(x_{ij})](\eta_j) \big\} = \io(\delta(T))(\eta)
\]
This proves the claim of the lemma in this case.

For a general $T \in \sK(H_A)_\de$, we may choose a sequence $\{ T_m\}$ in $M_\infty(\sA)$ such that $T_m \to T$ in the norm $\| \cd \|_\de : \sK(H_A)_\de \to [0, \infty)$. We then use the fact that $\T{diag}(D) : \sD(\T{diag}(D)) \to Y^\infty$ is closed to conclude that $\io(\rho(T))(\eta) \in \sD(\T{diag}(D))$ with
\[
\sD(\T{diag}(D))\big( \io(\rho(T))(\eta) \big) = \io(\rho(T))\big( \T{diag}(D)(\eta) \big) + \io(\de(T))(\eta)
\]
This proves the lemma.
\end{proof}

Let us consider the bounded positive selfadjoint operator
\[
\De_W := (W \ot 1) \De (W^* \ot 1) : (P \ot 1) Y^\infty \to (P \ot 1) Y^\infty
\]
where $P \ot 1 := (W \ot 1)(W^* \ot 1) : Y^\infty \to Y^\infty$ is the orthogonal projection associated with the isometry $(W \ot 1) : X \hot_A Y \to Y^\infty$, see Section \ref{s:symsel}.

We then remark that $\De (1 \ot_\Na D) \De : \sD\big( \T{diag}(D)(W \ot 1)\De \big) \to X \hot_A Y$ and 
\[
\De_W \T{diag}(D) \De_W : \sD\big( \T{diag}(D) \De_W \big) \to (P \ot 1) Y^\infty
\]
are unitarily equivalent unbounded operators. Furthermore, we have that
\[
\begin{split}
\De_W & = (W \ot 1)(W^* K^2 W \ot 1)(W^* \ot 1) \\
& = \io(\rho(P K^2))\big|_{(P \ot 1) Y^\infty} : (P \ot 1) Y^\infty \to (P \ot 1) Y^\infty
\end{split}
\]

\begin{prop}
The unbounded operator $\De_W \T{diag}(D) \De_W : \sD\big( \T{diag}(D) \De_W \big) \to (P \ot 1) Y^\infty$ is essentially selfadjoint and regular.
\end{prop}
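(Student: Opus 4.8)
The plan is to recognise this proposition as an instance of the composition result of Section~\ref{s:comregunb}, after transporting the situation to the full standard module $Y^\infty$. First I would record that the bounded operator $\De_W = \io(\rho(PK^2))$ is a well-defined positive selfadjoint element of $\sL(Y^\infty)$: by Theorem~\ref{t:difabs} we have $KP = PK$, so $PK^2 = (PK)^2$ with $PK$ a selfadjoint element of $\sK(H_A)$ (Theorem~\ref{t:difabs}(3)); hence $PK^2$ is positive, it lies in $\sK(H_A)_\de$ by Theorem~\ref{t:difabs}(4), and $\io \cd \rho$ is a $*$-homomorphism. Moreover $\De_W$ factors through the orthogonal projection $P \ot 1$, that is $(P \ot 1)\De_W = \De_W = \De_W(P \ot 1)$; extending the operator on $(P \ot 1)Y^\infty$ by zero on $(1 - P \ot 1)Y^\infty$, we may and do regard $\De_W$ as this element of $\sL(Y^\infty)$.

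Next I would verify that the triple $\big(Y^\infty,\ \T{diag}(D),\ \De_W\big)$ satisfies the standing hypotheses of Section~\ref{s:comregunb}. The diagonal operator $\T{diag}(D) : \sD(\T{diag}(D)) \to Y^\infty$ is selfadjoint and regular, as recalled in Section~\ref{s:symsel}. Since $PK^2 \in \sK(H_A)_\de$, Lemma~\ref{l:comcombou} shows that $\De_W$ preserves $\sD(\T{diag}(D))$ and that the commutator $\big[\T{diag}(D),\De_W\big] : \sD(\T{diag}(D)) \to Y^\infty$ extends to the bounded adjointable operator $\io(\de(PK^2)) \in \sL(Y^\infty)$. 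Thus the hypotheses imposed at the beginning of Section~\ref{s:comregunb} hold with $X$, $D$ and $x$ replaced by $Y^\infty$, $\T{diag}(D)$ and $\De_W$ respectively, with $\de(\De_W) = \io(\de(PK^2))$. Proposition~\ref{p:essregcom} then applies verbatim and shows that the closure of $\De_W\,\T{diag}(D)\,\De_W : \sD\big(\T{diag}(D)\De_W\big) \to Y^\infty$ is selfadjoint and regular on $Y^\infty$.

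Finally, I would descend from $Y^\infty$ to the summand $(P \ot 1)Y^\infty$ appearing in the statement. Since $\De_W = (P \ot 1)\De_W(P \ot 1)$, the operator $\De_W\,\T{diag}(D)\,\De_W$ annihilates $(1 - P \ot 1)Y^\infty$ and maps into $(P \ot 1)Y^\infty$, so the orthogonal projection $P \ot 1$ reduces its closure; consequently $\T{cl}\big(\De_W\,\T{diag}(D)\,\De_W\big)$ is the orthogonal direct sum of the zero operator on $(1 - P \ot 1)Y^\infty$ and its restriction to $(P \ot 1)Y^\infty$. A reducing direct summand of a closed densely defined symmetric operator is again closed, densely defined and symmetric, and surjectivity of $\T{cl}\big(\De_W\,\T{diag}(D)\,\De_W\big) \pm i$ restricts to surjectivity of the corresponding operators $\pm i$ on the summand $(P \ot 1)Y^\infty$; by the characterisation of selfadjointness and regularity recalled in Section~\ref{s:symsel}, this restriction is selfadjoint and regular, which is the assertion. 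The one genuine subtlety --- and the reason the results of Section~\ref{s:comregunb} cannot be invoked directly on $(P \ot 1)Y^\infty$ --- is that $\T{diag}(D)$ does not preserve the submodule $(P \ot 1)Y^\infty$ on which $\De_W$ a priori lives; extending $\De_W$ by zero to $Y^\infty$ removes this obstruction, at the mild cost of checking that selfadjointness and regularity pass to a reducing direct summand.
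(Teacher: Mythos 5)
Your proposal is correct and follows essentially the same route as the paper: extend $\De_W = \io(\rho(PK^2))$ by zero to all of $Y^\infty$, use $PK^2 \in \sK(H_A)_\de$ (Theorem \ref{t:difabs}) together with Lemma \ref{l:comcombou} to verify the standing hypotheses of Section \ref{s:comregunb}, and then invoke Proposition \ref{p:essregcom}. The only difference is that you spell out in detail the reduction to the direct summand $(P \ot 1)Y^\infty$, which the paper compresses into its opening ``it is enough to show'' step.
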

\begin{proof}
It is enough to show that
\[
\io( PK^2) \T{diag}(D) \io(P K^2) : \sD\big( \T{diag}(D) \De_W \big) + \big( (1 - P) \ot 1 \big) Y^\infty \to Y^\infty
\]
is essentially selfadjoint and regular. Now, by the differentiable absorption theorem (Theorem \ref{t:difabs}), we have that $P K^2 \in \sK(H_A)_\de$. By Lemma \ref{l:comcombou}, the pair consisting of the unbounded selfadjoint regular operator $\T{diag}(D) : \sD(\T{diag}(D)) \to Y^\infty$ and the bounded selfadjoint operator $\io( \rho(P K^2) ) : Y^\infty \to Y^\infty$ therefore satisfies the assumptions applied in Section \ref{s:comregunb}. This proves the current lemma by an application of Proposition \ref{p:essregcom}.
\end{proof}

The main result of this section now follows immediately:

\begin{thm}
The unbounded operator $\De (1 \ot_\Na D) \De : \sD\big( (1 \ot_\Na D) \De \big) \to X \hot_A Y$ is essentially selfadjoint and regular.
\end{thm}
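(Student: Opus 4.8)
The plan is to obtain the statement as an immediate corollary of the preceding proposition by means of a unitary equivalence. Recall from Section~\ref{s:symsel} that $W \ot 1 : X \hot_A Y \to Y^\infty$ is an isometry, so $P \ot 1 := (W \ot 1)(W^* \ot 1)$ is an orthogonal projection and $W \ot 1$ induces a unitary isomorphism of Hilbert $C^*$-modules $X \hot_A Y \cong (P \ot 1) Y^\infty$. First I would verify that this unitary intertwines $\De (1 \ot_\Na D) \De$ with $\De_W \T{diag}(D) \De_W$. Since $1 \ot_\Na D = (W^* \ot 1) \T{diag}(D)(W \ot 1)$ by definition, $\De = (W^* \ot 1) \De_W (W \ot 1)$, and $(W^* \ot 1)(W \ot 1) = 1_{X \hot_A Y}$, conjugating $\De_W \T{diag}(D) \De_W$ by $W^* \ot 1$ produces $\De (W^* \ot 1) \T{diag}(D)(W \ot 1) \De = \De (1 \ot_\Na D) \De$; the accompanying identification of domains is $\sD\big( (1 \ot_\Na D) \De \big) = \{ \si : (W \ot 1) \De \si \in \sD(\T{diag}(D)) \} = \sD\big( \T{diag}(D)(W \ot 1) \De \big)$, which is precisely what is needed for $W \ot 1$ to carry the former domain onto $\sD\big( \T{diag}(D) \De_W \big)$.

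Next I would invoke the elementary fact that essential selfadjointness and regularity of an unbounded operator on a Hilbert $C^*$-module are preserved under conjugation by a unitary isomorphism: symmetry of the closure transfers verbatim, and so does surjectivity of $\T{cl}(\cd) \pm i$ (one may also phrase this through the criterion of \cite[Proposition 10.6]{Lan:HCM}). Together with the preceding proposition, which states that $\De_W \T{diag}(D) \De_W$ is essentially selfadjoint and regular, this already yields the theorem.

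I do not anticipate any genuine obstacle. All the substantive analysis — the use of the local-global principle (Theorem~\ref{t:locglopri}), the composition result for $x D x$ (Proposition~\ref{p:essregcom}), and the decisive membership $P K^2 \in \sK(H_A)_\de$ coming from the differentiable absorption theorem (Theorem~\ref{t:difabs}) — has already been performed in the proof of the preceding proposition. The one point demanding a little care is the bookkeeping of domains under the unitary equivalence, so that one really is comparing the same operators on the two sides; once this is settled, the conclusion is immediate.
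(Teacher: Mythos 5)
Your proposal is correct and follows the paper's own route exactly: the paper likewise observes that $\De (1 \ot_\Na D) \De$ and $\De_W \T{diag}(D) \De_W$ are unitarily equivalent via the isometry $W \ot 1$, and then deduces the theorem ``immediately'' from the preceding proposition. Your explicit verification of the domain identity $\sD\big( (1 \ot_\Na D) \De \big) = \sD\big( \T{diag}(D)(W \ot 1) \De \big)$ is a detail the paper leaves implicit, and it is handled correctly.
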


\providecommand{\bysame}{\leavevmode\hbox to3em{\hrulefill}\thinspace}
\providecommand{\MR}{\relax\ifhmode\unskip\space\fi MR }
\providecommand{\MRhref}[2]{%
  \href{http://www.ams.org/mathscinet-getitem?mr=#1}{#2}
}
\providecommand{\href}[2]{#2}

\bibliographystyle{amsalpha-lmp}

\end{document}